\newcommand{\deltat}{{\Delta t}}
\newcommand{\aleq}{\stackrel{<}{\sim}}
\newtheorem{df}{Definition}[section]
\newtheorem{cor}{Corollary}[section]
\newcommand{\mesh}{\mathcal{T}}
\newcommand{\R}{\mathbb{R}}
\newcommand{\stik}{\sum_{\substack{\sigma \in {\cal{E}}(K) \\ \sigma=K|L}}}
\newcommand{\bu}{\boldsymbol{u}}
\newcommand{\bn}{\boldsymbol{n}}
\newcommand{\bv}{\boldsymbol{v}}
\newcommand{\bU}{\boldsymbol{U}}
\newcommand{\bV}{\boldsymbol{V}}
\newcommand{\Det}{\Delta t}
\newcommand{\dS}{\, {\mathrm d}S}
\newcommand{\dx}{\, {\mathrm d}x}
\newtheorem{Theorem}{Theorem}[section]
\newtheorem{Proposition}{Proposition}[section]
\newtheorem{Lemma}{Lemma}[section]
\newtheorem{Corollary}{Corollary}[section]
\newtheorem{Remark}{Remark}[section]
\newcommand{\bTheorem}[1]{
\begin{Theorem} \label{T#1} }
\newcommand{\eT}{\end{Theorem}}
\newcommand{\bProposition}[1]{
\begin{Proposition} \label{P#1}}
\newcommand{\eP}{\end{Proposition}}
\newcommand{\bLemma}[1]{
\begin{Lemma} \label{L#1} }
\newcommand{\eL}{\end{Lemma}}
\newcommand{\bCorollary}[1]{
\begin{Corollary} \label{C#1} }
\newcommand{\eC}{\end{Corollary}}
\newcommand{\bFormula}[1]{
\begin{equation} \label{#1}}
\newcommand{\eF}{\end{equation}}
\newcommand{\bRemark}[1]{
\begin{Remark} \label{R#1} }
\newcommand{\eR}{\end{Remark}}
\newcommand{\Ov}[1]{\overline{#1}}
\newcommand{\vr}{\varrho}
\newcommand{\vc}[1]{{\bf #1}}
\newcommand{\Div}{{\rm div}_x}
\newcommand{\Grad}{\nabla_x}
\newcommand{\tn}[1]{\mbox {\F #1}}
\newcommand{\Rm}{\mbox{\FF R}}
\newcommand{\bProof}{{\bf Proof: }}
\newcommand{\vu}{\vc{u}}
\newcommand{\ep}{\varepsilon}
\font\F=msbm10 scaled 1000
\font\FF=msbm10 scaled 800
\definecolor{grey}{rgb}{0.85,0.85,0.85}
\date{}
\long\def\greybox#1{%
    \newbox\contentbox%
    \newbox\bkgdbox%
    \setbox\contentbox\hbox to \hsize{%
        \vtop{
            \kern\columnsep
            \hbox to \hsize{%
                \kern\columnsep%
                \advance\hsize by -2\columnsep%
                \setlength{\textwidth}{\hsize}%
                \vbox{
                    \parskip=\baselineskip
                    \parindent=0bp
                    #1
                }%
                \kern\columnsep%
            }%
            \kern\columnsep%
        }%
    }%
    \setbox\bkgdbox\vbox{
        \color{grey}
        \hrule width  \wd\contentbox %
               height \ht\contentbox %
               depth  \dp\contentbox
        \color{black}
    }%
    \wd\bkgdbox=0bp%
    \vbox{\hbox to \hsize{\box\bkgdbox\box\contentbox}}%
    \vskip\baselineskip%
}
\DeclareMathOperator{\dv}{div}
\DeclareMathOperator{\dt}{dt}
\DeclareMathOperator{\intt}{int}
\DeclareMathOperator{\extt}{ext}
\numberwithin{equation}{section}
\title{Error estimates for a numerical method for the compressible Navier-Stokes system on sufficiently smooth domains}
\author{Eduard Feireisl \thanks{The research of E.F. leading to these results has received funding from the European Research Council under the European Union's Seventh Framework
Programme (FP7/2007-2013)/ ERC Grant Agreement 320078. The Institute of Mathematics of the Academy of Sciences of the Czech
Republic is supported by RVO:67985840.}  \and Radim Ho\v sek \thanks{The research of R.H. leading to these results has received funding from the European Research Council under the European Union's Seventh Framework
Programme (FP7/2007-2013)/ ERC Grant Agreement 320078. The Institute of Mathematics of the Academy of Sciences of the Czech
Republic is supported by RVO:67985840.} \and David Maltese \and Anton\' \i n Novotn\' y\thanks{The work
of D.M. and A.N. has been supported by the MODTERCOM project within the APEX programme of the Provence-Alpes-C\^ote d'Azur region}}
\begin{document}

\maketitle

\bigskip

\centerline{Institute of Mathematics of the Academy of Sciences of the Czech Republic}

\centerline{\v Zitn\' a 25, CZ-115 67 Praha 1, Czech Republic}

\medskip
\centerline{Institut Math\'ematiques de Toulon, EA2134, University of Toulon}

\centerline{BP 20132, 839 57 La Garde, France }

\begin{abstract}
We derive an {\it a priori} error estimate for the numerical solution   obtained by time and space discretization by the finite volume/finite element method  of the barotropic Navier--Stokes equations.
The numerical solution  on a convenient polyhedral domain approximating a sufficiently smooth bounded domain is compared with an exact solution of the barotropic Navier--Stokes equations with  a bounded density. The
result is unconditional in the sense that there are no assumed bounds on the numerical solution. It is obtained
by the combination of discrete relative energy inequality derived in \cite{GHMN} and several recent results in the theory of compressible Navier-Stokes equations concerning blow up criterion established in \cite{SuWaZha} and weak strong uniqueness principle established in \cite{FeJiNo}.

\end{abstract}

{\bf Key words:} Navier-Stokes system, finite element numerical method,  finite volume numerical method, error estimates

{\bf AMS classification} 35Q30,  65N12, 65N30, 76N10, 76N15, 76M10, 76M12

\section{Introduction}
\label{i}

We consider the compressible Navier-Stokes equations in the barotropic regime in a space-time cylinder $Q_T=(0,T)\times\Omega$, where $T>0$ is arbitrarily large and $\Omega \subset R^3$ is a bounded domain:

\bFormula{i1}
\partial_t \vr + \Div (\vr \vu) = 0,
\eF
\bFormula{i2}
\partial_t (\vr \vu) + \Div (\vr \vu \otimes \vu) + \Grad p(\vr) = \Div \tn{S}(\Grad \vu),
\eF
In equations (\ref{i1}--\ref{i2}) $\vr=\vr(t,x)\ge 0$ and $\vc u=\vc u(t,x)\in R^3$, $t\in [0,T)$, $x\in \Omega$ are unknown density and velocity fields, while $\tn S$ and $p$ are viscous stress and pressure characterizing the fluid
via the constitutive relations

\bFormula{i3}
\tn{S} (\Grad \vu)  = \mu \left( \Grad \vu + \Grad^t \vu - \frac{2}{3} \Div \vu \tn{I} \right) ,\  \mu > 0,
\eF

\bFormula{i4}
p \in C^2(0,\infty) \cap C^1[0, \infty),\
p(0) = 0 , \ p'(\vr) > 0 \ \mbox{for all}\ \vr \geq 0, \ \lim_{\vr \to \infty} \frac{p'(\vr)}{\vr^{\gamma-1}} = p_\infty > 0,
\eF
where $\gamma\ge 1$.

Assumption $p'(0)>0$ excludes constitutive laws behaving as $\vr^\gamma$ as $\vr \to 0^+$. Error estimate stated in Theorem \ref{M1} however still holds in the case $\lim_{\vr \to 0^+} \frac{p'(\vr)}{\vr^{\gamma-1}}>0$ at the price of some additional difficulties, see \cite{GHMN} for more details.

Equations (\ref{i1}--\ref{i2}) are completed with the no-slip boundary conditions

\bFormula{i6}
\vu |_{\partial \Omega} = 0,
\eF
and initial conditions
\bFormula{i7}
\vr(0, \cdot) = \vr_0, \ \vu(0, \cdot) = \vu_0, \ \vr_0 > 0 \ \mbox{in} \ \Ov{\Omega}.
\eF
We notice that under assumption (\ref{i3}), we may write
\bFormula{i5}
\Div \tn{S}(\Grad \vu) = \mu \Delta \vu + \frac \mu 3 \Grad \Div \vu.
\eF

 The results on error estimates for numerical schemes for the compressible Navier-Stokes equations are in the mathematical literature on short supply. We refer the reader to papers of Liu \cite{Liu1}, \cite{Liu2},
Yovanovic \cite{YOVAN}, Gallouet et al. \cite{GHMN}.

In \cite{GHMN} the authors have developed a methodology of deriving unconditional error estimates for the numerical schemes to the compressible Navier-Stokes equations (\ref{i1}--\ref{i7}) and applied it to the numerical scheme
(\ref{num1}--\ref{num3}) discretizing the system on  polyhedral domains. They have obtained error estimates for the discrete solution with respect to a {\it classical solution} of the system on the same (polyhedral) domain.
In spite of the fact that \cite{GHMN} provides the first and   to the best of our knowledge so far the sole error estimate for discrete solutions of a finite volume/finite element approximation to a model of compressible fluids that does not need any assumed bounds on the numerical solution itself, it has two weak points: 1) The existence of classical solutions on at least a short time interval to the compressible Navier-Stokes equations is known for smooth $C^3$ domains (see Valli, Zajaczkowski \cite{VAZA} or Cho, Choe, Kim \cite{ChoChoeKim}) but may not be in general true on the polyhedral domains. 2) The numerical solutions are compared with the classical exact solutions (as is usual in any previous existing mathematical literature). In this paper we address both points raised above and to a certain extent remove the limitations of the theory presented in \cite{GHMN}.

More precisely, we generalize the result of Gallouet et al.  \cite[Theorem 3.1]{GHMN} in two directions:
\begin{description}
\item {(1)} The physical domain $\Omega$ filled by the fluid and the numerical domain $\Omega_h$, $h>0$ approximating the physical domain do not need to coincide.
\item {(2)} If the physical domain is sufficiently smooth (at least of class $C^3$)  and the $C^3-$ initial data satisfy natural compatibility conditions, we are able to obtain the unconditional error estimates with respect to any {\it weak exact solution with bounded density}.
    \end{description}
    As in \cite{GHMN}, and in contrast with any other error estimate literature dealing with finite volume or mixed finite volume/finite element methods for compressible fluids
     (Yovanovich \cite{YOVAN}, Canc\`es et al \cite{CancesMathisSeguin2014Relative}, Eymard et al. \cite{EGGH98}, Villa, Villedieu \cite{VV03}, Rohde, Yovanovich \cite{RJ05}, Gastaldo et al. \cite{GHLT} and others) this result does not require any  assumed bounds on the discrete solution: the sole bounds needed for the result are those provided by the numerical scheme. Moreover, in contrast with \cite{GHMN} and with all above mentioned papers, the exact solution is solely weak solution with bounded density. This seemingly weak hypothesis is compensated by the regularity and compatibility conditions imposed on initial data that make possible a (sophisticated) bootstrapping argument showing that weak solutions with bounded density are in fact strong solutions in the class investigated in \cite{GHMN}.

     These results are achieved by using the following tools:
     \begin{description}
     \item {(1)} The technique introduced in \cite{GHMN} modified in order to accommodate non-zero velocity of the exact sample solution on the boundary of the numerical domain.
     \item   {(2)} Three fundamental recent results from the theory of compressible Navier-Stokes equations, namely
         \begin{itemize}
         \item Local in time existence of strong solutions in class (\ref{r7}--\ref{r8}) by Cho, Choe, Kim \cite{ChoChoeKim}.
             \item Weak strong uniqueness principle proved in \cite{FeJiNo} (see also \cite{FENOSU}).
             \item Blow up criterion for strong solutions in the class (\ref{r7}--\ref{r8}) by Sun, Wang, Zhang
             \cite{SuWaZha}.
         \end{itemize}
         The three above mentioned items allow to show that the weak solution with bounded density emanating from the sufficiently smooth initial data is
         in fact a strong solution defined on the large time interval $[0,T)$.
             \item {(3)} Bootstrapping  argument using recent results on maximal regularity for parabolic systems by Danchin \cite{DANCHIN}, Denk, Pruess, Hieber \cite{DEHIEPR} and Krylov \cite{Krylov}.
                 The last item allows to bootstrap the strong solution in the class Cho, Choe, Kim \cite{ChoChoeKim} to the class needed for the error estimates in \cite{GHMN}, provided
                 a certain compatibility condition for the initial data is satisfied.
\end{description}

\section{Preliminaries}
\subsection{Weak and strong solutions to the Navier-Stokes system}
\label{r}

We introduce the notion of the weak solution to system (\ref{i1}--\ref{i4}):

\begin{df}[Weak solutions]\label{ws}
{ Let $\varrho_0  : \Omega \to [0, +\infty) $ and $\bu_0 :\Omega \to  \R^3$  with  finite energy $E_0=\int_\Omega (\frac{1}{2} \varrho_0 |\bu_0|^2 + {{H}}(\vr_0)) \dx$ and finite mass $0<M_0=\int_\Omega\vr_0\dx$.}
We shall say that the pair $ (\vr,\bu) $ is a weak solution to the problem   \eqref{i1}--\eqref{i7}  emanating from the initial data $ (\vr_0,\bu_0)$ if:
\begin{description}
\item{(a)}  $ \vr \in C_{\rm weak}([0,T]; L^a(\Omega)), ~\text{for a certain}~ a > 1, \; \vr \ge 0 ~a.e. ~\text{in}~ (0,T)\time\Omega,$  and $ \bu \in L^2(0,T; W_0^{1,2}(\Omega; \R^3)).$

\item{(b)} the continuity equation  $(\ref{i1})$ is satisfied in the following weak sense
\begin{equation}\label{contf}
	\int_{\Omega} \vr \varphi \dx\Big|_0^\tau  = \int_{0}^{\tau}\int_\Omega \Big(\vr  \partial_t \varphi +  \vr \bu \cdot \nabla_x \varphi\Big) \dx\dt, \; \forall \tau \in [0,T], \,  \forall \varphi \in C^{\infty}_c ( [0,T]\times \overline{\Omega}).
\end{equation}
\item{(c)} $\vr\bu\in C_{\rm weak}([0,T]; L^b(\Omega; \R^3))$, for a certain $b > 1$, and the momentum equation $(\ref{i2}) $ is satisfied in the weak sense,\begin{multline}\label{movf}
	\int_{\Omega} \vr\bu \cdot \varphi \dx\Big|_0^\tau = \int_0^\tau \int_\Omega \Big(\vr\bu \cdot \partial_t \varphi+ \vr\bu\otimes\bu:\nabla\varphi
	+ p(\vr) \dv\varphi\Big)\dx\dt \\
	- \int_0^\tau \int_\Omega\Big(\mu\nabla \bu : \nabla_x \varphi \dx\dt +(\mu +\lambda){\rm div}\bu{\rm div}\varphi\Big)\dx\dt, \; \forall \tau \in [0,T], \, \forall  \varphi \in C^{\infty}_c ( [0,T] \times \Omega;\R^3).
\end{multline}
\item {(d)} The following energy inequality is satisfied
\begin{equation}\label{ienergief}
	\int_\Omega \Big(\frac{1}{2} \vr|\bu|^2 + {{H}}(\vr)\Big) \dx\Big|_0^\tau + \int_0^\tau \int_\Omega\Big( \mu| \nabla \bu |^2 +(\mu+\lambda)|\dv\bu|^2\Big) \dx\dt\le 0, \mbox{ for a.a. } \tau\in (0,T),
\end{equation}
\begin{equation}\label{H}
	{ \mbox{ with } \; H(\vr)=\vr\int_1^\vr\frac{p(z)}{z^2}{\rm d}z.}
\end{equation}
\end{description}
Here and hereafter the symbol $\displaystyle \int_{\Omega} g \dx\,|_0^\tau$ is meant for $\displaystyle  \int_\Omega g(\tau,x)\dx - \int_\Omega g_0(x)\dx$.
\end{df}

In the above definition, we tacitly assume that all  the integrals in the formulas \eqref{contf}--\eqref{ienergief} are defined and we recall that
 $C_{\rm weak}([0,T]; L^a(\Omega))$  is the space of functions of $L^\infty([0,T]; L^a(\Omega))$ which are continuous
as functions of time in the weak topology of the space $L^a(\Omega)$.

{ We notice that the function $\vr\mapsto H(\vr)$ is a solution of the ordinary differential equation
$\vr H'(\vr)-H(\vr)=p(\vr)$
with the constant of integration fixed such that $H(1)=0$.}

Note that the existence of weak solutions emanating from the finite energy initial data is well-known on bounded Lipschitz domains  provided
 $\gamma> 3/2$,  see Lions \cite{LI4} for `large' values of $\gamma$, Feireisl and coauthors \cite{FNP} for $\gamma>3/2$.

\bProposition{r1}

Suppose the $\Omega \subset R^3$ is a bounded domain of class $C^3$. Let $r$, $V$ be a  weak solution to problem (\ref{i1}--\ref{i7}) in
$(0,T) \times \Omega$, originating from the initial data
\bFormula{r1}
r_0 \in C^3(\Ov{\Omega}), \ r_0 > 0 \ \mbox{in} \ \Ov{\Omega},
\eF
\bFormula{r2}
\vc{V}_0 \in C^3 (\Ov{\Omega}; R^3),
\eF
satisfying the compatibility conditions
\bFormula{r3}
\vc{V}_0|_{\partial \Omega} = 0, \ \Grad p(r_0)|_{\partial \Omega} = \Div \tn{S}(\Grad \vc{V}_0 )|_{\partial \Omega},
\eF
and such that
\bFormula{r4}
0 \leq r \leq \Ov{r} \ \mbox{a.a. in}\ (0,T) \times \Omega.
\eF

Then $r$, $\vc{V}$ is a classical solution satisfying the bounds:
\bFormula{r5}
\| 1/ r \|_{C([0,T] \times \Ov{\Omega})} + \| r \|_{C^1([0,T] \times \Ov{\Omega})}
+ \| \partial_{t} \Grad r \|_{C([0,T]; L^6(\Omega; R^3))} +
\| \partial^2_{t,t} r \|_{C([0,T]; L^6(\Omega))}
 \leq D ,
\eF
\bFormula{r6}
\| \vc{V} \|_{C^1 ([0,T] \times \Ov{\Omega}; R^3)} + \| \vc{V} \|_{C([0,T]; C^{2}(\Ov{\Omega}; R^3))} +
\| \partial_t \Grad \vc{V} \|_{C([0,T]; L^6(\Omega; R^{3 \times 3}))}   + \| \partial^2_{t,t} \vc{V} \|_{L^2(0,T; L^6(\Omega))}
\leq D,
\eF
where $D$ depends on $\Omega$, $T$, $\Ov{r}$, and the initial data $r_0$, $\vc{V}_0$ (via $\|(r_0,\vc V_0)\|_{C^3(\overline\Omega;R^4)}$ and $\min_{x\in\overline\Omega} r_0(x)$).

\eP
\bProof

The proof will be carried over in several steps.

\medskip

{\bf Step 1}

According to Cho, Choe, and Kim \cite{ChoChoeKim}, problem (\ref{i1}--\ref{i7}) admits a strong solution unique in the class
\bFormula{r7}
r \in C([0,T_M); W^{1,6}(\Omega)), \ \partial_t r \in C([0,T_M); L^6(\Omega)), \;1/r\in L^\infty(Q_T),
\eF
\bFormula{r8}
\vc{V} \in C([0,T_M]; W^{2,2}(\Omega; R^3)) \cap L^2(0, T_M; W^{2,6}(\Omega; R^3)),\
\partial_t \vc{V} \in L^2(0, T_M; W^{1,2}_0 (\Omega; R^3)).
\eF
defined on a time interval $[0, T_M)$, where $T_M > 0$ is finite or infinite and depends on the initial data.
Moreover, for any $T_M^*<T_M$, there is a constant $c=c(T_M^*)$ such that
\begin{equation}\label{estCCK}
\|r\|_{L^\infty(0, T_M^*;W^{1,6}(\Omega))}+\|\partial_t r\|_{L^\infty(0, T_M^*;L^{6}(\Omega))} +\|1/r\|_{L^\infty(Q_T)}
\end{equation}
$$
+ \|\vc V\|_{L^\infty(0, T_M^*;W^{2,2}(\Omega; R^3))}+\|\vc V\|_{L^2(0, T_M^*;W^{2,6}(\Omega; R^3))}+\|\partial_t \vc V\|_{L^2(0, T_M^*;W^{1,2}(\Omega))}
$$
$$
\le c\Big(\|r_0\|_{W^{1,6}(\Omega)}+\|\vc V_0\|_{W^{2,2}(\Omega)}\Big).
$$

\medskip

{\bf Step 2}

By virtue of the weak-strong uniqueness result stated in \cite[Theorem 4.1]{FeJiNo} (see also \cite[Theorem 4.6]{FENOSU}), the weak solution $r$, $\vc{V}$ coincides on the time interval $[0,T_M)$ with the strong solution, the existence of which is claimed in the previous step. According to Sun, Wang, Zhang \cite[Theorem 1.3]{SuWaZha}, if $T_M<\infty$ then
$$
\limsup_{t\to T_M-}\|r(t)\|_{L^\infty(\Omega)}=\infty.
$$
Since (\ref{r4}) holds, we infer that $T_M=T$. At this point we conclude that couple $(r,\vc V)$ possesses regularity (\ref{r7}--\ref{r8}) and that that the bound (\ref{estCCK}) holds with $c$ dependent solely on $T$.

\medskip

{\bf Step 3}

Since the initial data enjoy the regularity and compatibility conditions stated in (\ref{r1}--\ref{r3}), a straightforward bootstrap argument gives rise to better bounds, specifically,
the solution belongs to the Valli-Zajaczkowski (see \cite[Theorem 2.5]{VAZA}) class
\bFormula{r9}
r \in C([0,T]; W^{3,2}(\Omega)), \ \partial_t r  \in L^2(0,T; W^{2,2} (\Omega)),
\eF
\bFormula{r10}
\vc{V} \in C([0,T]; W^{3,2}(\Omega)) \cap L^2(0,T; W^{4,2}(\Omega; R^3)), \ \partial_t \vc{V} \in L^2(0,T; W^{2,2} (\Omega; R^3)),
\eF
where, similarly to the previous step, the norms depend only on the initial data, $\Ov{r}$, and $T$.

\medskip

{\bf Step 4}

We write equation (\ref{i2}) in the form
\bFormula{r10+}
\partial_t \vc{V} - \frac{1}{r} \Div \tn{S}(\Grad \vc{V}) = - \vc{V} \cdot \Grad \vc{V} + \frac{1}{r} \Grad p(r),
\eF
where, by virtue of (\ref{r10}) and a simple interpolation argument, $\vc{V} \in C^{1+ \nu}([0,T] \times \Ov{\Omega}; R^{3 \times 3})$, and, by the same token  $ r \in C^{1+ \nu}([0,T] \times \Ov{\Omega})$ for some $\nu > 0$. Consequently, by means of the standard theory of parabolic equations, see for instance Ladyzhenskaya et al. \cite{LADSOLUR}, we may infer that $r$, $\vc{V}$ is a classical solution,
\bFormula{r11}
\partial_t \vc{V}, \ \Grad^2 \vc{V} \ \mbox{H\" older continuous in}\ [0,T] \times \Ov{\Omega}.
\eF
and, going back to (\ref{i1}),
\bFormula{r12}
\partial_t r \ \mbox{H\" older continuous in}\ [0,T] \times \Ov{\Omega}.
\eF

\medskip

{\bf Step 5}

We write
\[
\Grad \partial_t r = - \Grad \vc{V} \cdot \Grad r - \vc{V} \cdot \Grad^2 r - \Grad r \Div \vc{V} - r \Grad \Div \vc{V};
\]
whence, by virtue (\ref{r9}), (\ref{r11}), (\ref{r12}), and the Sobolev embedding $W^{1,2} \hookrightarrow L^6$,
\bFormula{r13}
\partial_t r \in C([0,T]; W^{1,6}(\Omega)).
\eF

Next, we differentiate (\ref{r10+}) with respect to $t$. Denoting $\vc{Z} = \partial_t \vc{V}$ we therefore obtain
\bFormula{r11+}
\partial_t \vc{Z} - \frac{1}{r} \Div \tn{S}(\Grad \vc{Z}) + \vc{V} \cdot \Grad \vc{Z} = \partial_t \left( \frac{1}{r} \right) \Div \tn{S}(\Grad \vc{V})
- \partial_t \vc{V} \cdot \Grad \vc{V}  + \partial_t \left( \frac{1}{r} \Grad p(r) \right),
\eF
where, in view of (\ref{r13}) and the previously established estimates, the expression on the right-hand side is bounded in
$C([0,T]; L^6(\Omega; R^3))$. Thus using the $L^p-$maximal regularity (see Denk, Hieber, and Pruess \cite{DEHIEPR}, Krylov \cite{Krylov} or   Danchin \cite[Theorem 2.2]{DANCHIN} ), we deduce that
\bFormula{r14}
\partial^2_{t,t} \vc{V} = \partial_t \vc{Z} \in L^2(0,T; L^6(\Omega; R^3)),\
\partial_t \vc{V} = \vc{Z} \in C([0,T]; W^{1,6} (\Omega; R^3)).
\eF

Finally, writing
\[
\partial^2_{t,t} r = - \partial_t \vc{V} \cdot \Grad r - \vc{V} \cdot \partial_t \Grad r - \partial_t r \Div \vc{V} -
r \partial_t \Div \vc{V},
\]
and using (\ref{r13}), (\ref{r14}), we obtain the desired conclusion
\[
\partial^2_{t,t} r \in C([0,T]; L^6(\Omega)).
\]

\qed

Here and hereafter, we shall use notation $a\aleq b$ and $a\approx b$. the symbol $a\aleq b$ means that there exists
$c=c(\Omega, T,\mu,\gamma)>0$ such that $a\le c b$; $a\approx b$ means $a\aleq b$ and $b\aleq a$.

\subsection{Extension lemma}

\bLemma{Ex1}
Under the hypotheses of Proposition \ref{Pr1}, the functions $r$ and $\vc{V}$ can be extended outside $\Omega$ in such a way that:
\begin{description}
\item {(1)}
The extended functions (still denoted by $r$ and $\vc{V}$) are such that ${\vc V}$ is compactly supported in $[0,T]\times \R^3$ and $r\ge\underline r>0$.
\item {(2)}
\bFormula{ex1}
\| \vc{V} \|_{C^1 ([0,T] \times R^3; R^3)} + \| \vc{V} \|_{C([0,T]; C^{2}(R^3; R^3))} +
\| \partial_t \Grad \vc{V} \|_{C([0,T]; L^6(R^3; R^{3 \times 3}))}   + \| \partial^2_{t,t} \vc{V} \|_{L^2(0,T; L^6(R^3))}
\eF
\[
\aleq
\| \vc{V} \|_{C^1 ([0,T] \times \Ov{\Omega}; R^3)} + \| \vc{V} \|_{C([0,T]; C^{2}(\Ov{\Omega}; R^3))} +
\| \partial_t \Grad \vc{V} \|_{C([0,T]; L^6(\Omega; R^{3 \times 3}))}   + \| \partial^2_{t,t} \vc{V} \|_{L^2(0,T; L^6(\Omega))};
\]
\item {(3)}
\bFormula{ex2}
\| r \|_{C^1([0,T] \times R^3)}
+ \| \partial_{t} \Grad r \|_{C([0,T]; L^6(R^3; R^3))} +
\| \partial^2_{t,t} r \|_{C([0,T]; L^6(R^3))}
\eF
\[
\aleq \| r \|_{C^1([0,T] \times \Ov{\Omega})}
+ \| \partial_{t} \Grad r \|_{C([0,T]; L^6(\Omega; R^3))} +
\| \partial^2_{t,t} r \|_{C([0,T]; L^6(\Omega))} +
\]
\[
\| \vc{V} \|_{C^1 ([0,T] \times \Ov{\Omega}; R^3)} + \| \vc{V} \|_{C([0,T]; C^{2}(\Ov{\Omega}; R^3))} +
\| \partial_t \Grad \vc{V} \|_{C([0,T]; L^6(\Omega; R^{3 \times 3}))}   + \| \partial^2_{t,t} \vc{V} \|_{L^2(0,T; L^6(\Omega))};
\]
\item {(4)}
\bFormula{ex3}
\partial_t r + \Div (r \vc{V}) = 0 \ \mbox{in}\ (0,T) \times R^3.
\eF

\end{description}
\eL
\bProof  { We first construct the extension of the vector field $\vc V$. To this end, we follow  the standard construction in the flat domain, see Adams \cite[Chapter 5, Theorem 5.22]{A} and combine it with the standard
procedure of `flattening' of the boundary and the partition of unity technique, we get (\ref{ex1}) Once this is done, we solve on the whole space the transport equation (\ref{ex3}). It is easy to show that the unique solution $r$ of this equation possesses regularity and estimates stated in (\ref{ex2}).}
\qed
\bRemark{r1}
Here and hereafter, we denote
$X_T(\R^3)$ a subset of $L^2((0,T)\times\R^3)$ of couples $(r,\vc V)$, $r> 0$ with finite norm
\begin{equation}\label{XT}
\|(r,\vc V)\|_{X_T(\R^3)}\equiv \| r \|_{C^1([0,T] \times R^3)}
+ \| \partial_{t} \Grad r \|_{C([0,T]; L^6(R^3; R^3))} +
\| \partial^2_{t,t} r \|_{C([0,T]; L^6(R^3))}
\end{equation}
$$
\| \vc{V} \|_{C^1 ([0,T] \times R^3; R^3)} + \| \vc{V} \|_{C([0,T]; C^{2}(R^3; R^3))} +
\| \partial_t \Grad \vc{V} \|_{C([0,T]; L^6(R^3; R^{3 \times 3}))}   + \| \partial^2_{t,t} \vc{V} \|_{L^2(0,T; L^6(R^3))}
$$
We notice that the first component of the couple belonging to $X_T(\R^3)$ is always strictly positive on $[0,T]\times R^3$ and set
\begin{equation}\label{uor}
0<\underline r={\rm min}_{(t,x)\in [0,T]\times \Rm^3}r(t,x),\quad \overline r={\rm max}_{(t,x)\in [0,T]\times \Rm^3}r(t,x)<\infty
\end{equation}
\eR

\subsection{Physical domain, mesh approximation}

The physical space is represented by a bounded domain $\Omega \subset R^3$ of class $C^3$. The numerical domains $\Omega_h$ are polyhedral domains,
\bFormula{mesh}
\Ov{\Omega}_h = \cup_{K \in \mathcal{T}} {K},
\eF
where ${\cal T}$ is a set of tetrahedra which have the following property:
If $K \cap L \ne \emptyset$, $K \ne L$, then $K\cap L$ is either a common face, or a common edge, or a common vertex.
 By ${\cal{E}}(K)$, we denote the set of the  faces  $\sigma$ of the element $K \in {\cal{T}}$.
The set of all faces of the mesh is denoted by ${\cal{E}}$; the set of faces included in the boundary $\partial\Omega_h$ of $\Omega_h$ is denoted by ${\cal{E}}_{\extt}$ and the set of internal faces (i.e ${\cal{E}} \setminus {\cal{E}}_{\extt} $) is denoted by ${\cal{E}}_{\intt}$.

Further, we ask
\bFormula{vertex}
\mathcal{V}_h \in \partial \Omega_h \ \mbox{a vertex} \ \Rightarrow \ \mathcal{V}_h \in \partial \Omega.
\eF
Furthermore, we suppose that each $K$ is a tetrahedron such that
\begin{equation}\label{reg1}
\xi [K] \approx {\rm diam}[K] \approx h,
\end{equation}
where $\xi[K]$ is the radius of the largest ball contained in $K$.

The properties of this mesh needed in the sequel are formulated in the following lemma, whose (easy) proof is left to the reader.

\bLemma{N1}

There exists a positive constant $d_\Omega$ depending solely on the geometric properties of $\partial \Omega$ such that
\[
{\rm dist}[x, \partial \Omega] \leq d_\Omega h^2,
\]
for any $x \in \partial \Omega_h$. Moreover,
$$
|(\Omega_h\setminus\Omega)\cup (\Omega\setminus\Omega_h)|\aleq h^2.
$$

\eL

We find important to emphasize that $\Omega_h \not\subset \Omega$, in general.

\subsection{Numerical spaces}
We denote by $Q_h(\Omega_h)$ the space of piecewise constant functions:
\begin{equation}\label{Q}
Q_h(\Omega_h)=\{q\in L^2(\Omega_h)\,|\, ~\forall K \in {\cal{T}}, ~q_{|K}\in \R\}.
\end{equation}

For a function $v$ in $C(\overline\Omega_h)$, we set
\begin{equation}
 	v_K = \frac 1{|K|}\int_K v \dx \textrm{ for } K \in {\cal T} \textrm{  and }   \Pi^Q_h v{ (x)}= \sum_{K\in {\cal T}} v_K 1_K{ (x)},\; { x\in \Omega}.
	\label{vhat}
\end{equation}
{ Here and in what follows, $1_K$ is the characteristic function of $K$}.

We define the Crouzeix-Raviart space with `zero traces':
\begin{equation}\label{CR0}
	 V_{h,0}(\Omega_h) = \{ v \in L^2(\Omega_h), ~\forall K \in {\cal{T}}, ~v_{|K} \in \mathbb{P}_1(K),
\end{equation}
$$
	 \forall \sigma \in {\cal{E}}_{\intt} ,\; \sigma=K|L,\; \int_{\sigma}v_{|K} \dS=\int_{\sigma}v_{|L}\dS,\quad
	\forall \sigma' \in {\cal{E}}_{\extt},\; \int_\sigma' v \dS=0 \},
$$
and `with general traces'
\begin{equation}\label{CR}
	 V_{h}(\Omega_h) = \{ v \in L^2(\Omega), ~\forall K \in {\cal{T}}, ~v_{|K} \in \mathbb{P}_1(K),\;
\forall \sigma \in {\cal{E}}_{\intt} ,\; \sigma=K|L,\; \int_{\sigma}v_{|K} \dS=\int_{\sigma}v_{|L}\dS\}.
\end{equation}
We denote by  $\Pi^V_h$ the standard Crouzeix-Raviart projection, and $\Pi^V_{h,0}$ the Crouzeix-Raviart projection with
`zero trace', specifically,
\[
\Pi^V_h: C(\overline\Omega_h)\rightarrow V_h(\Omega_h),\;\int_\sigma \Pi^V_h[ \phi] \ {\rm dS}_x = \int_\sigma \phi \ {\rm dS}_x \ \mbox{for all}\ \sigma \in \mathcal{E},
\]
\[
\Pi^V_{h,0}: C(\overline\Omega_h)\rightarrow V_h(\Omega_h), \;
\int_\sigma \Pi^V_{h,0}[ \phi] \ {\rm dS}_x = \int_\sigma \phi \ {\rm dS}_x \ \mbox{for all}\ \sigma \in \mathcal{E}_{\rm int},
\;\; \int_{\sigma} \Pi^V_{h,0}[ \phi] \ {\rm dS}_x = 0 \ \mbox{whenever}\ \sigma \in \mathcal{E}_{\rm ext} .
\]

{ If}  $v \in W^{1,1}(\Omega_h)$,  we set
\begin{equation}
	v_\sigma=\frac 1{|\sigma|}\int_\sigma v{\rm d} S\textrm{ for } \sigma\in {\cal E}.
\label{vtilde}
\end{equation}

Each element $v\in V_h(\Omega_h)$ can be written in the form
\begin{equation}\label{CRP}
	v{ (x)}=\sum_{\sigma\in{\cal E}}v_\sigma\varphi_\sigma{ (x)},\quad  x\in \Omega_h,
\end{equation}
where the set $\{\varphi_\sigma\}_ {\sigma\in{\cal E}}\subset V_h(\Omega_h)$ is the classical Crouzeix-Raviart basis determined by
\begin{equation}\label{CRB}
\forall (\sigma,\sigma')\in {\cal E}^2,\;\frac 1{|\sigma'|}\int_{\sigma'}\varphi_\sigma \dS=\delta_{\sigma,\sigma'}.
\end{equation}

Similarly, each element $v\in V_{h,0}(\Omega_h)$ can be written in the form
\begin{equation}\label{CRP+}
	v{ (x)}=\sum_{\sigma\in{\cal E}_{\rm int}}v_\sigma\varphi_\sigma{ (x)},\quad  x\in \Omega_h.
\end{equation}


We first recall in Lemmas \ref{Lemma1}--\ref{Lemma6} the standard properties of the projection $\Pi^V_h$. The collection of their proofs in the requested generality can be found in the Appendix of \cite{GHMN} with exception of Lemma \ref{LN2} and its Corollary \ref{CN1}. We refer to the monograph of Brezzi, Fortin \cite{BRFO}, the Crouzeix's and Raviart's paper \cite{cro-73-con}, Gallouet, Herbin, Latch\'e \cite{GHL2009iso} for the original versions of some of these proofs. We present the proof of Lemma \ref{LN2}
dealing with the comparison of projections $\Pi^V_h$ and $\Pi^V_{h,0}$ that we did not find in the literature.

\begin{Lemma} \label{Lemma1}
The following estimates hold true:
\begin{equation}\label{ddd}
\|\Pi^V_h[\phi]\|_{L^\infty(K)}+ \|\Pi^V_{h,0}[\phi]\|_{L^\infty(K)}\aleq\|\phi\|_{L^\infty(K)},
\end{equation}
for all $K\in {\cal T}$ and $\phi\in C(K)$;

\begin{equation}\label{L1-2}
 \|\phi-\Pi^V_h[\phi]||_{L^p(K)} \aleq h^{s}\|\nabla^s \phi\|_{L^p(K;\Rm^{d^s})},\; s=1,2,\; 1\le p\le\infty,
\end{equation}
and
\begin{equation}\label{L1-3}
|| \nabla( \phi- \Pi^V_h[\phi]) ||_{L^p(K;\Rm^d)} \le c h^{s-1} \|\nabla^s \phi\|_{L^p(K;\Rm^{d^s})},\; s=1,2,\; 1\le p\le\infty,
\end{equation}
for all $K\in {\cal T}$ and $\phi\in C^s(K)$.
\end{Lemma}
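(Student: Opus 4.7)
All three estimates are classical and local (they only involve one tetrahedron $K$), so the natural route is the standard scaling argument: pull back to a fixed reference tetrahedron $\widehat K$, prove the estimate there once and for all, and scale back using the shape regularity (\ref{reg1}). Under (\ref{reg1}) the affine map $F_K:\widehat K\to K$ satisfies $\|DF_K\|\approx h$ and $\|DF_K^{-1}\|\approx h^{-1}$, and $|K|\approx h^3$, $|\sigma|\approx h^2$ for each face $\sigma\in\E(K)$, with constants depending only on the shape regularity. On the reference element $\widehat K$ the Crouzeix--Raviart projection $\widehat\Pi$ is a fixed bounded linear map from $C(\widehat K)$ into the four-dimensional space $\mathbb{P}_1(\widehat K)$, so all norms on its range are equivalent with absolute constants.

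\textbf{Stability (\ref{ddd}).} By construction, $\Pi^V_h[\phi]_{|K}$ is the unique $\mathbb{P}_1$ function whose four face averages coincide with those of $\phi$. Writing it in the CR nodal basis $\{\varphi_\sigma\}_{\sigma\in\E(K)}$ on $K$, one has $\Pi^V_h[\phi]_{|K}=\sum_{\sigma\in\E(K)}\phi_\sigma\varphi_\sigma$ with $|\phi_\sigma|\le\|\phi\|_{L^\infty(K)}$. Pulled back to $\widehat K$, the local basis becomes the fixed reference CR basis, which is bounded in $L^\infty(\widehat K)$ by an absolute constant; hence $\|\Pi^V_h[\phi]\|_{L^\infty(K)}\lesssim\|\phi\|_{L^\infty(K)}$. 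For $\Pi^V_{h,0}$ the same argument works: the coefficients associated with boundary faces are set to zero (which only makes the function smaller, not larger) and the coefficients on interior faces are still face averages of $\phi$ bounded by $\|\phi\|_{L^\infty(K)}$.

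\textbf{Approximation (\ref{L1-2}) and (\ref{L1-3}).} The key algebraic property is that $\widehat\Pi$ reproduces $\mathbb{P}_1(\widehat K)$, i.e.\ $\widehat\Pi[p]=p$ for every affine $p$. This is used together with the Bramble--Hilbert lemma on $\widehat K$: for $s\in\{1,2\}$ and $1\le p\le\infty$,
\begin{equation*}
\|\widehat\phi-\widehat\Pi[\widehat\phi]\|_{L^p(\widehat K)}\;\lesssim\;\inf_{q\in\mathbb{P}_{s-1}(\widehat K)}\|\widehat\phi-q\|_{W^{s,p}(\widehat K)}\;\lesssim\;\|\widehat\nabla^s\widehat\phi\|_{L^p(\widehat K)},
\end{equation*}
where in the first inequality we used the $L^\infty$--$L^p$ stability of $\widehat\Pi$ established above (combined with the embedding $W^{s,p}(\widehat K)\hookrightarrow C(\widehat K)$ for $s\ge 1$, $p\ge 1$ in three dimensions, which holds for $s=2$ and for $s=1$ after the usual modification for $p=1$ via the face-trace definition of $\widehat\Pi$). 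The same inequality holds with $L^p$ on the left-hand side replaced by $W^{1,p}$ (gradient estimate). Scaling back via $F_K$ introduces the factors $|K|^{1/p}$ on norms and $h^{-k}$ per derivative applied on the physical side, yielding exactly $h^s$ for (\ref{L1-2}) and $h^{s-1}$ for (\ref{L1-3}).

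\textbf{Main obstacle.} Nothing deep is at stake; the only care point is the $s=2$ case with non-smooth $p$, where one must justify applying the Bramble--Hilbert lemma with $\mathbb{P}_1$ as the polynomial subtracted and $\widehat\nabla^2$ on the right-hand side. This is handled by invoking the lemma on the reference tetrahedron $\widehat K$ for the linear functional $\widehat\phi\mapsto\widehat\phi-\widehat\Pi\widehat\phi$, which is bounded on $W^{2,p}(\widehat K)$ by the stability of $\widehat\Pi$ and vanishes on $\mathbb{P}_1(\widehat K)$. Everything else reduces to bookkeeping of powers of $h$ through the affine change of variables.
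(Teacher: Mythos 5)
Your proof is correct and is essentially the canonical argument: the paper itself gives no proof of this lemma but refers to the Appendix of \cite{GHMN} and to Brezzi--Fortin and Crouzeix--Raviart, where precisely this affine-scaling plus Bramble--Hilbert/Deny--Lions route (with $\mathbb{P}_1$-reproduction of the reference projection) is used. Two harmless imprecisions worth fixing: dropping the boundary-face coefficients does not literally ``make the function smaller'' in $L^\infty$ (but the triangle-inequality bound over at most four basis functions gives (\ref{ddd}) anyway), and $W^{2,p}(\widehat K)\hookrightarrow C(\widehat K)$ fails in three dimensions for $p\le 3/2$, so the boundedness of $\widehat\Pi$ on $W^{s,p}$ should be argued for \emph{all} $s=1,2$ and $1\le p\le\infty$ via the $W^{1,1}$--trace bound on face averages that you already invoke for the case $s=1$, $p=1$.
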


\begin{Lemma}\label{Lemma4}
Let $1\le p<\infty$. Then
\begin{equation}\label{norms1}
\sum_{\sigma\in {\cal E}} |\sigma| h |v_\sigma|^p\approx||v||^p_{L^p(\Omega_h)},
\end{equation}
with any $v\in V_h(\Omega_h)$.
\end{Lemma}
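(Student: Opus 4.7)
\textbf{Proof plan for Lemma \ref{Lemma4}.}

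The plan is to reduce the equivalence to a finite-dimensional norm comparison on a reference tetrahedron and then scale. First I would split the integral elementwise,
$$
\|v\|_{L^p(\Omega_h)}^p = \sum_{K\in\mathcal{T}} \|v\|_{L^p(K)}^p,
$$
using that $v|_K\in \mathbb{P}_1(K)$. Symmetrically, since each internal face belongs to exactly two elements and each external face to exactly one, the face sum can be rewritten (up to a factor at most $2$) as
$$
\sum_{\sigma\in\mathcal{E}} |\sigma|\, h\, |v_\sigma|^p \approx \sum_{K\in\mathcal{T}} \sum_{\sigma\in\mathcal{E}(K)} |\sigma|\, h\, |v_\sigma|^p,
$$
so it suffices to prove the elementwise equivalence
$$
\|v\|_{L^p(K)}^p \approx \sum_{\sigma\in\mathcal{E}(K)} |\sigma|\, h\, |v_\sigma|^p \qquad \forall\, K\in\mathcal{T}.
$$

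Next I would work on a fixed reference tetrahedron $\hat K$. The space $\mathbb{P}_1(\hat K)$ is four-dimensional, and the four functionals $\hat v\mapsto \hat v_{\hat\sigma} = \frac{1}{|\hat\sigma|}\int_{\hat\sigma}\hat v\,\mathrm{d}S$ indexed by the faces $\hat\sigma$ of $\hat K$ form a basis of its dual (this is exactly the unisolvence property behind the Crouzeix--Raviart basis \eqref{CRB}). Consequently, on the finite-dimensional space $\mathbb{P}_1(\hat K)$, the two seminorms
$$
\hat v \mapsto \|\hat v\|_{L^p(\hat K)}, \qquad \hat v \mapsto \Bigl(\sum_{\hat\sigma\in\mathcal{E}(\hat K)} |\hat v_{\hat\sigma}|^p\Bigr)^{1/p}
$$
are both in fact norms, hence equivalent with constants depending only on $p$ and $\hat K$.

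Finally I would transfer this equivalence to a generic element $K\in\mathcal{T}$ by the affine map $F_K:\hat K\to K$ and use the mesh-regularity assumption \eqref{reg1}, which yields $|K|\approx h^3$ and $|\sigma|\approx h^2$ for every face $\sigma\in\mathcal{E}(K)$. Under the change of variables, $\|v\|_{L^p(K)}^p = |K|\,|\hat K|^{-1}\|\hat v\|_{L^p(\hat K)}^p$ where $\hat v = v\circ F_K$, and the face averages satisfy $v_\sigma = \hat v_{\hat\sigma}$ for the corresponding faces. Combining with the reference-element equivalence and using $|\sigma|\, h \approx h^3\approx |K|$, I get
$$
\|v\|_{L^p(K)}^p \approx |K|\sum_{\hat\sigma\in\mathcal{E}(\hat K)}|\hat v_{\hat\sigma}|^p \approx \sum_{\sigma\in\mathcal{E}(K)} |\sigma|\, h\, |v_\sigma|^p,
$$
with constants depending only on $p$ and on the shape-regularity constant hidden in \eqref{reg1}. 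Summing over $K\in\mathcal{T}$ finishes the proof.

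The only mildly delicate point is keeping the constants uniform in $h$: this is precisely what the regularity assumption \eqref{reg1} is for, since it bounds the Jacobians of the affine maps $F_K$ and their inverses uniformly in terms of $h$, making the elementwise equivalence constant independent of $K$.
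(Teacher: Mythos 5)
Your proof is correct and is the standard reference-element scaling argument (unisolvence of the Crouzeix--Raviart face-average functionals on $\mathbb{P}_1(\hat K)$, norm equivalence in finite dimensions, then affine scaling with $|K|\approx|\sigma|h\approx h^3$ from \eqref{reg1}). The paper does not reproduce a proof but defers it to the appendix of \cite{GHMN}, where essentially this same argument is carried out, so there is nothing to flag.
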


\begin{Lemma}\label{Lemma2+}
The following Sobolev-type inequality holds true:
\begin{equation}\label{sob1}
||v||^2_{L^{6}(\Omega_h)} \aleq \sum_{K\in {\cal T}}\int_K|\Grad v|^2{\rm d} x,
\end{equation}
with any $v\in V_{h,0}(\Omega_h)$.
\end{Lemma}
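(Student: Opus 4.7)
\textbf{Proof plan for Lemma \ref{Lemma2+}.} My plan is to reduce the discrete inequality to the classical continuous Sobolev embedding $W^{1,2}_0(\Omega_h)\hookrightarrow L^6(\Omega_h)$ by constructing an $H^1_0$-conforming regularization of $v$. Let $X^c_{h,0}\subset W^{1,2}_0(\Omega_h)$ denote the standard continuous $\mathbb{P}_1$ Lagrange space on $\mathcal T$ with vanishing trace on $\partial\Omega_h$, and define a smoothing operator $R_h:V_{h,0}(\Omega_h)\to X^c_{h,0}$ by vertex averaging: at each interior vertex $\mathcal V$ of the mesh set
\[
(R_h v)(\mathcal V) \;=\; \frac{1}{\#\mathcal T_{\mathcal V}}\sum_{K\in\mathcal T_{\mathcal V}} v|_K(\mathcal V),
\]
where $\mathcal T_{\mathcal V}$ is the set of tetrahedra containing $\mathcal V$, and $(R_h v)(\mathcal V)=0$ at boundary vertices.

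The two local estimates driving everything are the stability bound $\sum_K\|\nabla R_h v\|_{L^2(K)}^2\aleq\sum_K\|\nabla v\|_{L^2(K)}^2$ and the approximation bound $\|v-R_h v\|_{L^2(K)}\aleq h\,\|\nabla v\|_{L^2(\omega_K)}$, where $\omega_K$ denotes the patch of tetrahedra sharing a vertex with $K$. Both are obtained by the usual scaling argument under the shape-regularity hypothesis \eqref{reg1}: the defect $v|_K(\mathcal V)-v|_L(\mathcal V)$ between the two polynomial representations of $v$ at a vertex $\mathcal V$ of a common face $\sigma=K|L$ is controlled by the jump $[v]_\sigma$, whose zero mean on $\sigma$ (the defining Crouzeix--Raviart condition in \eqref{CR0}) together with a face-wise trace/Poincar\'e estimate bounds it by $h^{1/2}$ times the broken gradient on $K\cup L$. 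At boundary vertices the zero-mean condition on $\sigma\in\mathcal E_{\rm ext}$ plays exactly the same role, yielding $|v|_K(\mathcal V)|\aleq h^{1/2}\|\nabla v\|_{L^2(K)}$ in a mean sense.

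Once $R_h$ has these two properties, I split $v=R_h v+(v-R_h v)$ and estimate each piece. For $R_h v\in W^{1,2}_0(\Omega_h)$ the continuous Sobolev embedding, whose constant is uniform in $h$ because $\Omega_h$ approximates the fixed $C^3$ domain $\Omega$ in the sense of \eqref{mesh}--\eqref{reg1}, gives
\[
\|R_h v\|_{L^6(\Omega_h)}^2 \;\aleq\; \|\nabla R_h v\|_{L^2(\Omega_h)}^2 \;\aleq\; \sum_{K\in\mathcal T}\|\nabla v\|_{L^2(K)}^2.
\]
For the error, the standard inverse inequality for $\mathbb P_1$ polynomials on a shape-regular tetrahedron, $\|p\|_{L^6(K)}\aleq h^{-1}\|p\|_{L^2(K)}$, combined with the approximation bound above, yields $\|v-R_h v\|_{L^6(K)}^6\aleq\|\nabla v\|_{L^2(\omega_K)}^6$. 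Setting $a_K=\|\nabla v\|_{L^2(\omega_K)}^2$, the elementary inequality $\sum_K a_K^3\le(\max_K a_K)^2\sum_K a_K\le(\sum_K a_K)^3$ together with the finite overlap of the patches $\{\omega_K\}$ gives $\|v-R_h v\|_{L^6(\Omega_h)}^2\aleq\sum_K\|\nabla v\|_{L^2(K)}^2$, and adding the two estimates completes the argument.

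The main obstacle is the $h$-independent stability bound for $R_h$: one must control the vertex-wise fluctuations of $v$ across the mesh by the broken $H^1$ norm, and it is exactly the zero-mean face conditions built into \eqref{CR0} (both on internal and external faces) that make this possible via a face Poincar\'e inequality. Everything else is a routine local scaling computation, and once the stability and approximation estimates are established the embedding follows in a few lines as above.
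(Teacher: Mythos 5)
Your argument is correct, and it is worth noting that the paper does not actually prove Lemma \ref{Lemma2+} at all: it is listed among the ``standard properties'' whose proofs are delegated to the Appendix of \cite{GHMN} and to the references \cite{BRFO,cro-73-con,GHL2009iso}, where the usual route is a direct broken Sobolev/BV-type estimate built from the face jumps of Lemma \ref{Lemma6}. Your route via the Oswald-type enriching operator $R_h$ into the conforming $\mathbb{P}_1$ space is a genuinely different and equally standard strategy: it buys you the continuous embedding $W^{1,2}_0(\Omega_h)\hookrightarrow L^6$ for free (with a constant that is in fact universal for $H^1_0$ functions extended by zero to $\R^3$, so no domain-approximation argument is even needed there), at the price of the two local estimates for $R_h$, and the treatment of the nonconforming remainder by the inverse inequality plus the $\ell^1\supset\ell^3$ embedding (which is exactly \eqref{dod1*}) is clean. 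All the ingredients you invoke are available in the paper: the zero-mean jump conditions in \eqref{CR0}, the trace/Poincar\'e estimates \eqref{L2-1}--\eqref{L2-2}, and shape regularity \eqref{reg1} for the finite overlap of the patches and the face-connectedness of vertex patches.

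One small imprecision to fix in the write-up: the zero-mean condition together with the face trace/Poincar\'e estimate gives $\|[v]_{\sigma}\|_{L^2(\sigma)}\aleq h^{1/2}\|\nabla v\|_{L^2(K\cup L)}$, but the \emph{pointwise} defect at a vertex then costs an additional face inverse inequality $\|w\|_{L^\infty(\sigma)}\aleq |\sigma|^{-1/2}\|w\|_{L^2(\sigma)}\approx h^{-1}\|w\|_{L^2(\sigma)}$, so the vertex jump is of order $h^{-1/2}\|\nabla v\|_{L^2(K\cup L)}$, not $h^{1/2}$; the same holds for $|v|_K(\mathcal V)|$ at boundary vertices. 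This is harmless --- combined with $\|v-R_hv\|_{L^2(K)}\le |K|^{1/2}\|v-R_hv\|_{L^\infty(K)}$ and $|K|^{1/2}\approx h^{3/2}$ it yields exactly the approximation bound $\|v-R_hv\|_{L^2(K)}\aleq h\|\nabla v\|_{L^2(\omega_K)}$ and, via the inverse inequality for the affine difference, the $h$-independent stability you need --- but the exponent as stated would mislead a reader checking the scaling.
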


\begin{Lemma}
There holds:
\begin{equation}\label{L1-1}
\sum_{K\in {\cal T}}\int_K q ~\dv\Pi_h^V[\bv] \dx = \int_{\Omega} q ~\dv \bv \dx,
\end{equation}
for all $\bv \in C^1(\overline\Omega_h,\R^d)$ and all $q \in Q_h(\Omega_h)$.
\end{Lemma}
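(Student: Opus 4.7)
The plan is to prove the identity by exploiting two facts in tandem: the divergence theorem applied element by element, and the defining property of the Crouzeix--Raviart projection that preserves mean values on faces. Note that since $q \in Q_h(\Omega_h)$ is piecewise constant and $\bigcup_{K\in\mathcal{T}} \overline K = \overline\Omega_h$, the right-hand side should be read as an integral over $\Omega_h$ (the statement is purely local to the numerical mesh).

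First I would reduce the sum on the left: writing $q|_K = q_K \in \R$, one has
\[
\sum_{K\in\mathcal{T}} \int_K q \, \dv \Pi_h^V[\bv] \dx = \sum_{K\in\mathcal{T}} q_K \int_K \dv \Pi_h^V[\bv] \dx.
\]
Next, for each fixed $K$, I would apply the divergence theorem to the smooth (piecewise affine) field $\Pi_h^V[\bv]$ on $K$:
\[
\int_K \dv \Pi_h^V[\bv] \dx \;=\; \sum_{\sigma \in \mathcal{E}(K)} \int_\sigma \Pi_h^V[\bv] \cdot \bn_{\sigma,K} \dS.
\]
Now invoke the key defining property of $\Pi_h^V$, namely $\int_\sigma \Pi_h^V[\bv] \dS = \int_\sigma \bv \dS$ for every face $\sigma \in \mathcal{E}$. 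Since the outward normal $\bn_{\sigma,K}$ is constant on the flat face $\sigma$, it can be pulled out of the integral, so that face by face
\[
\int_\sigma \Pi_h^V[\bv] \cdot \bn_{\sigma,K} \dS \;=\; \int_\sigma \bv \cdot \bn_{\sigma,K} \dS.
\]

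Summing over $\sigma \in \mathcal{E}(K)$ and reversing the divergence theorem (this time on the original smooth field $\bv$, which is legitimate because $\bv \in C^1(\overline\Omega_h;\R^d)$), we get $\int_K \dv \Pi_h^V[\bv] \dx = \int_K \dv \bv \dx$. Multiplying by $q_K$ and summing over $K \in \mathcal{T}$ yields the announced identity, because the family $\{K\}_{K \in \mathcal{T}}$ forms a partition of $\Omega_h$ up to a set of measure zero and $q|_K = q_K$. There is really no obstacle here: the whole argument hinges on the single observation that the Crouzeix--Raviart projection is designed precisely so that face averages of $\bv \cdot \bn$ are preserved, which is exactly what the element-wise divergence theorem sees when tested against a piecewise constant.
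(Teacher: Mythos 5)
Your proof is correct and is exactly the standard argument (element-wise divergence theorem combined with the face-mean preservation property of $\Pi_h^V$ and the constancy of $\bn_{\sigma,K}$ on each flat face); the paper itself omits the proof and refers to the appendix of \cite{GHMN}, where the same reasoning is used. Your reading of the right-hand side as an integral over $\Omega_h$ is also the intended one, since both $q$ and $\bv$ are only defined on $\overline\Omega_h$.
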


\begin{Lemma}[Jumps over faces in the Crouzeix-Raviart space]\label{Lemma6}
For all $ v \in V_{h,0}(\Omega_h)$ there holds
\begin{equation}\label{tbound}
\sum_{\sigma \in {\cal{E}}} \frac{1}{h} \int_\sigma [v]_{\sigma,\bn_\sigma}^2 \dS \aleq \sum_{K\in {\cal T}}\int_K|\Grad v|^2{\rm d} x,
\end{equation}
where  $[v]_{\sigma,\bn_\sigma}$ is a jump of $v$  with respect to a normal $\bn_\sigma$ to the face $\sigma$,
\[
\forall x\in\sigma=K|L\in{\cal E}_{\rm int},\quad [v]_{\sigma,\bn_\sigma}(x)=\left\{
\begin{array}{c}
 v|_K(x)-v|_L(x)\;\mbox{if $\bn_\sigma=\bn_{\sigma,K}$}\\
v|_L(x)-v|_K(x)\;\mbox{if $\bn_\sigma=\bn_{\sigma,L}$}
\end{array}\right. ,
\]
($\bn_{\sigma,K}$ is the normal of $\sigma$, that is outer w.r. to element $K$) and
\[
\forall x\in\sigma\in{\cal E}_{\rm ext},\quad [v]_{\sigma,\bn_\sigma}(x)=v(x),\;\mbox{with $\bn_\sigma$ an exterior normal to $\partial\Omega$}.
\]
\end{Lemma}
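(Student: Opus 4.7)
The plan is to exploit the fact that on each face $\sigma$ the jump $[v]_{\sigma,\bn_\sigma}$ is an affine function of zero mean, and then apply a scaled Poincaré--Wirtinger inequality face by face. I would separate the sum into internal and external faces and reduce everything to a single face estimate of the form
\begin{equation*}
\frac{1}{h}\int_\sigma [v]_{\sigma,\bn_\sigma}^2\,\dS \aleq \int_K |\Grad v|^2\dx + \int_L |\Grad v|^2\dx,
\end{equation*}
where $K$, $L$ are the neighbours of $\sigma$ (only $K$ in the external case); summing and using the fact that each tetrahedron has at most four faces yields the claim.

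First I would observe that for $\sigma=K|L\in\E_{\intt}$ the restrictions $v|_K$ and $v|_L$ are affine, so $w:=v|_K-v|_L$ is affine on $\sigma$, and the defining CR property \eqref{CR0} gives $\int_\sigma w\,\dS=0$. For $\sigma\in\E_{\extt}$ the `zero-trace' condition in \eqref{CR0} yields $\int_\sigma v\,\dS=0$, so again the function whose $L^2(\sigma)$ norm we must control is affine with zero mean on $\sigma$. This uniform structural property is the key point where $V_{h,0}$ (rather than $V_h$) enters.

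Next I would prove the single-face estimate by the standard reference-element technique. Pulling $\sigma$ back to a fixed reference triangle $\hat\sigma$ by an affine map $F$ whose Jacobian satisfies $|F'|\approx h$ and $|(F')^{-1}|\approx h^{-1}$ (guaranteed by the shape regularity \eqref{reg1}), a mean-zero Poincaré--Wirtinger inequality on $\hat\sigma$ combined with the chain rule and change of variables gives
\begin{equation*}
\int_\sigma w^2\,\dS \aleq h^2\int_\sigma |\nabla_\tau w|^2\,\dS.
\end{equation*}
Because $w$ is affine, $|\nabla_\tau w|$ is a constant on $\sigma$ bounded by $|\Grad v_{|K}|+|\Grad v_{|L}|$, and $|\sigma|\approx h^2$, so
\begin{equation*}
\int_\sigma w^2\,\dS \aleq h^4\bigl(|\Grad v_{|K}|^2+|\Grad v_{|L}|^2\bigr).
\end{equation*}
Using $|K|,|L|\approx h^3$ and the fact that $\Grad v$ is piecewise constant gives $|\Grad v_{|K}|^2\approx h^{-3}\int_K|\Grad v|^2\dx$, whence the desired single-face bound.

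Finally I would sum the single-face inequality over $\sigma\in\E$. Each tetrahedron $K$ appears as a neighbour of at most $\cd(\E(K))\le 4$ faces, so the right-hand side collapses to a constant multiple of $\sum_{K\in\T}\int_K|\Grad v|^2\,\dx$, completing the proof. The main (only) technical obstacle is to verify the correct $h$-scaling in the Poincaré--Wirtinger step; this is where shape regularity \eqref{reg1} is essential, since otherwise the constant in the reference-element Poincaré inequality could degenerate. The external-face case is handled in exactly the same way by setting $L=\emptyset$ and using the zero-mean boundary condition in place of the CR matching.
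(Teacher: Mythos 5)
Your proof is correct, and it follows the standard argument (the one the paper itself defers to the Appendix of \cite{GHMN} rather than reproving): the Crouzeix--Raviart matching/zero-trace conditions make the jump an affine function with zero mean on each face, after which a scaled Poincar\'e--Wirtinger inequality on the face, the constancy of the piecewise gradient, and the shape-regularity scalings $|\sigma|\approx h^2$, $|K|\approx h^3$ give the single-face bound, which is then summed using the bounded number of faces per tetrahedron. No gaps.
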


We will need to compare the projections $\Pi_h^V$ and $\Pi_{h,0}^V$. Clearly they coincide on `interior' elements
meaning $K\in {\cal T}$, $K\cap\partial\Omega_h=\emptyset$. We have the following lemma for the tetrahedra with non void intersection with the boundary.

\bLemma{N2}
We have
\begin{equation}\label{N2-1}
\| \Pi^V_h [\phi] - \Pi^V_{h,0}[\phi] \|_{L^\infty(K)} + h \| \Grad (\Pi^V_h [\phi] - \Pi^V_{h,0}[\phi] ) \|_{L^\infty(K;R^3)}
\aleq
\sup_{\sigma \subset K\cap\partial_{\Omega_h} } \| \phi \|_{L^\infty(\sigma) }
\;\mbox{if $K\in {\cal T}$, $K\cap\partial_{\Omega_h}\neq\emptyset$},
\end{equation}
for any $\phi \in C(K)$.
\eL
\bProof
We recall the Crouzeix-Raviart basis (\ref{CRB}) and the fact that $\Pi^V_h$ and $\Pi^V_{h,0}$ differ only in basis functions corresponding to $\sigma \in \mathcal{E}_{\rm ext}$. We have

\begin{equation}\label{projections_difference}
\| \Pi^V_h [\phi]- \Pi^v_{h,0}[\phi] \|_{L^\infty(K)} \leq \left\| \sum_{\sigma \in \mathcal{E}(K) \cap \mathcal{E}_{\rm ext} } \varphi_\sigma \frac{1}{|\sigma|} \int_\sigma \phi \dS \right\|_{L^\infty(K)} \leq c(K) \cdot \sup_{\sigma \in \mathcal{E}(K) \cap \mathcal{E}_{\rm ext}} \| \phi \|_{L^\infty(\sigma)},
\end{equation}
and
\begin{equation*}
\begin{split}
& h \| \nabla_x (\Pi^V_h [\phi] - \Pi^V_{h,0} [\phi])\|_{L^\infty(K)} \leq h \left\| \sum_{\sigma \in \mathcal{E}(K) \cap \mathcal{E}_{\rm ext} } \nabla_x \varphi_\sigma \frac{1}{|\sigma|} \int_\sigma \phi \dS \right\|_{L^\infty(K)} \\ & \qquad \, \qquad \leq ch \sup_{\sigma \subseteq K \cap \partial \Omega_h} \| \phi \|_{L^\infty(\sigma)} \left\| \sum_{\sigma \in \mathcal{E}(K) \cap \mathcal{E}_{\rm ext} } \nabla_x \varphi_\sigma \right\|_{L^\infty(K)}.
\end{split}
\end{equation*}
The proof is completed by $ \| \sum_{\sigma \in \mathcal{E}(K) \cap \mathcal{E}_{\rm ext} } \nabla_x \varphi_\sigma \|_{L^\infty(K)} \leq c(K)h^{-1}$.
\qed

In fact, in the derivation of the error estimates we will use the  consequence of the above observations formulated in the following two corollaries.

\bCorollary{N1}
Let $\phi \in C^1(R^3)$ such that $\phi|_{\partial \Omega} = 0$.
Then we have,
\begin{equation}\label{N2-2}
\| \Pi^V_h [\phi] - \Pi^V_{h,0}[\phi] \|_{L^\infty(K)}=0\;\mbox{if $K\in {\cal T}$, $K\cap\partial_{\Omega_h}=\emptyset$,}
\end{equation}
\begin{equation}\label{N2-3}
\| \Pi^V_h [\phi] - \Pi^V_{h,0}[\phi] \|_{L^\infty(K)} + h \| \Grad (\Pi^V_h [\phi] - \Pi^V_{h,0}[\phi] ) \|_{L^\infty(K;R^3)}
\aleq h^2  \| \Grad \phi \|_{L^\infty(R^3;R^3) },
\end{equation}
if $K \in \mathcal{T}_h$, $K \cap \partial \Omega_h \ne \emptyset$, $\partial K \not \subset \partial \Omega$.
\eC

\bProof
Relation (\ref{N2-2}) follows immediately from (\ref{N2-1}), as there is an empty sum on the right hand side for `interior' elements ($K \cap \partial \Omega_h = \emptyset$).

For any $x \in \partial \Omega_h$ there exists $y \in \partial \Omega$ (and thus $\phi(y)=0$) such that

\begin{equation}\label{est_phisigma}
|\phi(x)| \leq {\rm dist}[x,y] \| \nabla_x \phi\|_{L^\infty(R^3;R^3)} \aleq h^2\| \nabla_x \phi\|_{L^\infty(R^3;R^3)},
\end{equation}
where we used Lemma \ref{LN1} for the latter inequality. The proof is completed by taking supremum over $K \in \mesh_h$ and combining with (\ref{est_phisigma}). Note that the mesh regularity property (\ref{reg1}) supplies a uniform estimate of constants $c(K)$ from the previous lemma, which enables to write the latter inequality in (\ref{est_phisigma}).
\qed

\bCorollary{N2}
For any $\phi\in C(R^3)$,
\begin{equation}\label{N2-4}
\| \Pi^V_h [\phi] - \Pi^V_{h,0}[\phi] \|_{L^p(K)}\aleq h^{3/p}\|\phi\|_{L^\infty(\overline\Omega_h)},\;1\le p<\infty.
\end{equation}
\eC

\bProof
Apply inverse estimates (see e.g. \cite[Lemma~2.9]{KARPER}) to (\ref{N2-1}).
\qed
%

We will frequently use the Poincar\'e, Sobolev  and interpolation inequalities on tetrahedra reported
in the following lemma.

\begin{Lemma}\label{Lemma2}
\medskip\noindent
\begin{description}
\item{\it (1) } We have,
\begin{equation}\label{L2-1}
\|v-v_K\|_{L^p(K)}\aleq h\|\nabla v\|_{L^p(K)},
\end{equation}
\begin{equation}\label{L2-2}
\forall \sigma\in {\cal E}(K), \;\|v-v_\sigma\|_{L^p(K)}\aleq  h\|\nabla v\|_{L^p(K)},
\end{equation}
for any $v\in W^{1,p}(K)$, where $1\le p\le\infty$.
\item{\it (2) } There holds
\begin{equation}\label{L2-3}
\|v-v_K\|_{L^{p^*}(K)} \aleq \|\nabla v\|_{L^p(K)},
\end{equation}
\begin{equation}\label{L2-4}
\forall \sigma\in {\cal E(}K), \;\|v-v_\sigma\|_{L^{p^*}(K)}\aleq \|\nabla v\|_{L^p(K)},
\end{equation}
 for any $v\in W^{1,p}(K)$, $1\le p<d$, where $p^*=\frac{dp}{d-p}$.
\item{\it (3) } We have,
\begin{equation}\label{interpol1}
\|v-v_K\|_{L^{q}(K)}
\le c  h^\beta\|\nabla v\|_{L^p(K;\Rm^d)},
\end{equation}
\begin{equation}\label{interpol2}
\|v-v_\sigma\|_{L^{q}(K)}
\le ch^\beta \|\nabla v\|_{L^p(K;\Rm^d)},
\end{equation}
for any $v\in W^{1,p}(K)$, $1\le p<d$,
where $\frac 1q=\frac \beta p+\frac {1-\beta}{p^*}$.
%
\end{description}
\end{Lemma}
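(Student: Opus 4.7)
The plan is to reduce every inequality in the lemma to an inequality on a reference tetrahedron $\hat K$ via an affine change of variables, and then to exploit the mesh regularity assumption (\ref{reg1}) to transfer estimates back to $K$ with uniform constants. Fix a reference tetrahedron $\hat K$ and, for each $K\in\T$, let $F_K(\hat x)=B_K\hat x+b_K$ map $\hat K$ onto $K$. By (\ref{reg1}) one has $\|B_K\|\aleq h$, $\|B_K^{-1}\|\aleq h^{-1}$ and $|\det B_K|\approx h^3$, with constants depending only on the shape-regularity constants of the mesh.

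For \emph{part (1)}, the classical Poincar\'e--Wirtinger inequality on $\hat K$ gives
$$ \|\hat v-\hat v_{\hat K}\|_{L^p(\hat K)}\aleq\|\hat\nabla\hat v\|_{L^p(\hat K)}\quad\text{for all }\hat v\in W^{1,p}(\hat K). $$
Setting $\hat v=v\circ F_K$, noting $\hat v_{\hat K}=v_K$, and changing variables produces the factor $|\det B_K|^{1/p}$ on both sides together with a factor $\|B_K^{-T}\|\aleq h^{-1}$ on the gradient side; the Jacobians cancel and leave one factor of $h$, yielding (\ref{L2-1}). The variant (\ref{L2-2}) with the face mean $v_\sigma$ follows by writing $v-v_\sigma=(v-v_K)+(v_K-v_\sigma)$ and estimating $|v_K-v_\sigma|$ by the trace theorem on $\hat K$ combined with Poincar\'e--Wirtinger, which yields $|v_K-v_\sigma|\aleq |K|^{-1/p}\,h\,\|\nabla v\|_{L^p(K)}$ and hence a contribution of the same order as (\ref{L2-1}).

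For \emph{part (2)}, one uses the Sobolev embedding $W^{1,p}(\hat K)\hookrightarrow L^{p^*}(\hat K)$ for $1\le p<d$, which in particular gives $\|\hat v-\hat v_{\hat K}\|_{L^{p^*}(\hat K)}\aleq\|\hat\nabla\hat v\|_{L^p(\hat K)}$. Transferring back to $K$, the scaling identities $\|w\|_{L^{p^*}(K)}=|\det B_K|^{1/p^*}\|\hat w\|_{L^{p^*}(\hat K)}$ and $\|\nabla v\|_{L^p(K)}\approx h^{-1}|\det B_K|^{1/p}\|\hat\nabla\hat v\|_{L^p(\hat K)}$ combine to give a total $h$-exponent of $\tfrac{3}{p^*}-\tfrac{3}{p}+1=0$, since $\tfrac1{p^*}=\tfrac1p-\tfrac1d$ with $d=3$. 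Hence (\ref{L2-3}) is $h$-independent, and (\ref{L2-4}) is obtained from it by the same $v_K\mapsto v_\sigma$ device as above.

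Finally, for \emph{part (3)}, since $\tfrac1q=\tfrac{\beta}{p}+\tfrac{1-\beta}{p^*}$ with $\beta\in[0,1]$, H\"older's inequality in the form
$$ \|w\|_{L^q(K)}\le\|w\|_{L^p(K)}^{\beta}\,\|w\|_{L^{p^*}(K)}^{1-\beta} $$
applied to $w=v-v_K$ (respectively $w=v-v_\sigma$) combined with (\ref{L2-1})--(\ref{L2-4}) yields the factor $h^{\beta\cdot 1+(1-\beta)\cdot 0}=h^{\beta}$, i.e.\ (\ref{interpol1})--(\ref{interpol2}). The only delicate point is ensuring that all hidden constants are independent of $K$; this is guaranteed by (\ref{reg1}), which bounds the condition number of $B_K$ uniformly, so the reference-element constants transfer to every $K\in\T$ without deterioration.
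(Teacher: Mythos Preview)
Your argument is correct and is exactly the standard scaling argument one would expect: pull back to a reference tetrahedron, invoke the Poincar\'e--Wirtinger and Sobolev inequalities there, and use the mesh regularity (\ref{reg1}) to control the affine map and obtain uniform constants; part~(3) then follows by H\"older interpolation between parts~(1) and~(2). Note that the paper itself does not give a proof of this lemma at all: it merely states the inequalities and refers globally to the Appendix of \cite{GHMN} for the proofs of Lemmas \ref{Lemma1}--\ref{Lemma6}, so there is nothing to compare against beyond observing that your reference-element/scaling approach is precisely the standard one underlying such results.
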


We finish the section of preliminaries by recalling two algebraic inequalities 1) the `imbedding' inequality
\begin{equation}\label{dod1*}
	\Big(\sum_{i=1}^L|a_i|^p\Big)^{1/p}\le \Big(\sum_{i=1}^L|a_i|^q\Big)^{1/q},
\end{equation}
for all $a=(a_1,\ldots,a_L)\in \mathbb{R}^L$, $1\le q\le p<\infty$ and the discrete H\"older inequality
\begin{equation}\label{dod2*}
	\sum_{i=1}^L|a_i||b_i|\le \Big(\sum_{i=1}^L|a_i|^q\Big)^{1/q} \Big(\sum_{i=1}^L|a_i|^p\Big)^{1/p},
\end{equation}
for all $a=(a_1,\ldots,a_L)\in \mathbb{R}^L$, $b=(b_1,\ldots,b_L)\in \Rm^L$, $\frac 1 q+\frac 1p=1$.

\section{Main result}

Here and hereafter we systematically use the following abbreviated  notation:
\begin{equation}\label{n1}
\hat \phi=\Pi^Q_h[\phi],\;\;\phi_h= \Pi^V_h [\phi],\;\; \phi_{h,0}= \Pi^V_{h,0} [\phi].
\end{equation}
For a function $v\in C([0,T], L^1(\Omega))$ we set
\begin{equation}\label{n2}
v^n(x)=v(t_n,x),
\end{equation}
where $t_0=0<t_1<\ldots<t_{n-1}<t_n<t_{n+1}<\ldots t_N=T$ is a partition of the interval $[0,T]$.
Finally, for a function $v\in V_h(\Omega_h)$ we denote
\begin{equation}\label{n3}
\nabla_h v(x)=\sum_{K\in{\cal T}} \Grad v(x)1_{K}(x),\;\; {\rm div}_h{\vc v}(x)= \sum_{K\in{\cal T}} {\rm div}_x\vc v(x)1_{K}(x).
\end{equation}

In order to ensure the positivity of the approximate densities, we shall use an upwinding technique for the density in the mass equation.
For  $q\in Q_h(\Omega_h)$ and  $\bu\in \vc V_{h,0} (\Omega_h;\R^3)$,
the upwinding of  $q$ with respect to  $\bu$ is defined, for $\sigma =K|L\in {\cal E}_{\rm{int}}$ by
\begin{equation}\label{upwind1}
q^{{\rm up}}_\sigma=\begin{cases}
                      q_K \;\mbox{if }\bu_\sigma\cdot\bn_{\sigma, K}> 0\\ q_L\; \mbox{if } \bu_\sigma\cdot\bn_{\sigma, K}{ \le} 0
                    \end{cases},
\end{equation}
and we denote
\[ {\rm Up}_K(q,\vc u)\equiv
\sum_{\sigma\in{\cal E}(K)c\cap {\cal E}_{\rm int}} q_\sigma^{\rm up}{\bu}_\sigma\cdot{\vc n}_{\sigma,K}= \sum_{\sigma\in{\cal E}(K)\cap {\cal E}_{\rm int}} \Big(q_K [{\bu}_\sigma\cdot{\vc n}_{\sigma,K}]^+ + q_L [{\bu_\sigma}\cdot{\vc n}_{\sigma,K}]^-\Big),
\]
where $a^+ = \max(a,0)$, $a^- = \min(a,0)$.

\subsection{Numerical scheme}

{\it We consider a couple $(\vr^n,\bu^n)=(\vr^{n,(\deltat,h)},\bu^{n,(\deltat,h)})$ of (numerical) solutions of the following algebraic system (numerical scheme):
\bFormula{num1}
\vr^n \in Q_h (\Omega_h), \ \vr^n > 0, \ \vu^n \in V_{h,0} (\Omega_h; R^3),\quad n=0,1,\ldots,N,
\eF

\bFormula{num2}
\sum_{K \in \mathcal{T} } |K| \frac{ \vr^n_K - \vr^{n-1}_K }{ \Det } \phi_K + \sum_{K \in \mathcal{T} }
\sum_{ \sigma \in \mathcal{E}(K) } |\sigma| \vr^{n,{\rm up}}_\sigma ( \vu^n_{\sigma} \cdot \vc{n}_{\sigma, K} ) \phi_K = 0
\ \mbox{for any}\ \phi \in Q_h(\Omega_h)\ \mbox{and}\ n=1,\ldots,N,
\eF
\bFormula{num3}
\sum_{K\in{\cal T}}\frac{|K|}{\deltat} \Big({\vr^n_K{{\hat \bu}}^n_{K}- \vr^{n-1}_K{{\hat \bu}}^{n-1}_{K}} \Big)\cdot \bv_K+ \sum_{K\in {\cal T}}\sum_{\sigma \in {\cal E}(K)} |\sigma|\vr^{n,{\rm up}}_\sigma {{ \hat\bu}}_{\sigma}^{n,{\rm up}}[\bu^n_\sigma\cdot \bn_{\sigma,K}]\cdot \bv_K
\eF
\[
- \sum_{K\in{\cal T}}p(\vr^n_K)\sum_{\sigma \in {\cal E}(K)}  |\sigma|\bv_\sigma\cdot {\vc n}_{\sigma,K}+\mu\sum_{K\in{\cal T}}\int_K \nabla\bu^n : \nabla\bv \  \dx
\]
\[
+ \frac{\mu}{3} \sum_{K\in{\cal T}}\int_K{\rm div}\bu^n{\rm div}\bv\,  \dx =0,
 \ \mbox{for any}\ \bv \in {V_{h,0}(\Omega;R^3) }\ \mbox{and}\ n=1,\ldots,N.
\]
}

The numerical solutions depend  on the size h of the space discretisation and on the time step $\deltat$.
For the sake of clarity and in order to simplify notation we will always systematically write in all formulas $(\vr^n,\bu^n)$ instead
of $(\vr^{n,(\deltat,h)},\bu^{n,(\deltat,h)})$.

Existence of a solution to problem (\ref{num1}--\ref{num3}) is well known together with the fact that
any solution $(\vr^n)_{1\le n \le N}\subset (Q_h(\Omega))^N$ satisfies $\vr^n >0$ provided
$\vr^0>0$  thanks to the upwind choice in (\ref{num2}) { (see e.g. \cite{GGHL2008baro,KARPER})}.

\bRemark{upwind}
Throughout the paper, $q_\sigma^{\rm up}$ is defined in (\ref{upwind1}), where $\vu$ is the numerical solution constructed in (\ref{num1}--\ref{num3}).
\eR

\subsection{Error estimates}

The main result of this paper is announced in the following theorem:

\bTheorem{M1}

Let $\Omega \subset R^3$ be a bounded domain of class $C^3$ and let the pressure satisfy (\ref{i4}) with $\gamma\ge 3/2$. Let $\{ \vr^n, \vu^n \}_{0 \leq n \leq N}$ be a family of numerical solutions resulting from the
scheme (\ref{num1}--\ref{num3}). Moreover, suppose there are initial data $[r_0, \vc{V}_0]$ belonging to the regularity class specified in
Proposition \ref{Pr1} and giving rise to a weak solution $[r, \vc{V}]$ to the
initial-boundary value problem (\ref{i1}--\ref{i7}) in $(0,T) \times \Omega$ satisfying
\[
0 \leq r(t,x) \leq \Ov{r} \ \mbox{a.a. in} \ (0,T) \times \Omega.
\]

Then $[r, \vc{V}]$ is regular and there exists a positive number
$$
C=
C\Big( M_0,E_0,\underline r, \overline r, |p'|_{{C^1}[\underline r,\overline r]}, \|(\partial_t r, \nabla r, \bV,
\partial_t\bV, \nabla\bV, \nabla^2\bV)\|_{L^\infty(Q_T;\Rm^{45})},
$$
$$
 \|\partial_t^2r\|_{L^1(0,T;L^{\gamma'}(\Omega))}, \|\partial_t\nabla r\|_{L^2(0,T;L^{6\gamma/{5\gamma-6}}(\Omega;\Rm^3))}, \|\partial_t^2\bV,\partial_t\nabla\bV\|_{L^2(0,T;L^{6/5}(\Omega;\Rm^{12}))},
\Big)
$$
such that
\bFormula{M1}
\sup_{1 \leq n \leq N} \int_{\Omega \cap \Omega_h} \left[ \frac{1}{2} \vr^n | \hat\vu^n - \vc{V}(t_n, \cdot) |^2 +
H(\vr^n) - H'(r(t_n, \cdot))(\vr^n - r(t_n, \cdot)) - H(r(t_n)) \right] \ \dx
\eF
\[
+ \Delta t \sum_{1 \leq n \leq N} \int_{\Omega \cap \Omega_h} \left| \nabla_h \vu^n - \Grad \vc{V} (t_n, \cdot) \right|^2 \ \dx
\]

\[
\leq C \left(\sqrt{\Delta t} + {h}^a +  \int_{\Omega \cap \Omega_h} \left[ \frac{1}{2} \vr^0 | \hat\vu^0 - \vc{V}_0 |^2 +
H(\vr^0) - H'(r_0)(\vr^0 - r_0) - H(r_0)) \right] \ \dx \right),
\]

where
\bFormula{M2}
a = \frac{2 \gamma - 3}{\gamma } \ \mbox{if}\ \frac{3}{2} \leq \gamma \leq 2, \ a = \frac{1}{2} \ \mbox{otherwise}.
\eF

\eT

Note that for $\gamma = 3/2$ Theorem \ref{TM1} gives only uniform bounds on the difference of exact and numerical solution, not the convergence.
\section{Uniform estimates}
If we take $\phi=1$ in formula (\ref{num2}) we get immediately the conservation of mass:
\begin{equation}\label{masscons}
	\forall n=1,...N,\quad \int_{\Omega_h} \vr^n \dx = \int_{\Omega_h} \vr^0 \dx.
\end{equation}

Next Lemma reports the standard energy estimates for the numerical scheme (\ref{num1}--\ref{num3}), see again \cite{GGHL2008baro,KARPER}.
\begin{Lemma}\label{Lestimates}
  Let  $(\vr^n,\bu^n)$ be a solution of the discrete problem (\ref{num1}--\ref{num3}) with the pressure $p$ satisfying (\ref{i4}).
Then there exist
\begin{align*}
&\overline \vr^n_{\sigma}\in [\min(\vr^n_K,\vr^n_L), \max(\vr^n_K,\vr^n_L)],\;\sigma=K|L\in {\cal E}_{\rm int},\; n=1,\ldots,N ,\\
&\overline\vr_K^{n-1,n}\in [\min(\vr^{n-1}_K,\vr^n_K), \max(\vr^{n-1}_K,\vr^n_K)],\; K\in {\cal T},\; n=1,\ldots,N,
\end{align*}
such that
\begin{multline}
\sum_{K\in {\cal T}}{|K|}\Big(\frac 12\vr^m_K|{\bu}^m_K|^2
 +H(\vr_K^m)\Big)
-\sum_{K\in {\cal T}}|K|\Big(\frac 12\vr^{0}_K|{\bu}^{0}_K|^2
+H(\vr_K^{0})\Big)
\\
+\deltat \sum_{n=1}^m\sum_{K\in {\cal T}}\Big(\mu\int_K|\Grad\bu^n|^2 \dx+( \mu +\lambda)\int_K|{\rm div}\bu^n|^2 \dx\Big)
\\
+ [D^{m,|\Delta\bu|}_{\rm time}]+ [D^{m,|\Delta\vr|}_{\rm time}]+ [D^{m,|\Delta\bu|}_{\rm space}] +  [D^{m, |\Delta\vr|}_{\rm space}]= 0,
\label{denergyinequality}
\end{multline}
for all $m=1,\ldots,N$,
where
\begin{subequations}\label{upwinddissipation}
	\begin{align}
		& [D^{m,|\Delta\bu|}_{\rm time}]=\sum_{n=1}^m\sum_{K\in {\cal T}}{|K|}\vr_K^{n-1}\frac {|{\bu}_K^n-{\bu}_K^{n-1}|^2} 2,
		\label{upwinddissipation_1}\\
		&[D^{m,|\Delta\vc \vr|}_{\rm time}]=\sum_{n=1}^m\sum_{K\in {\cal T}}|K|H''(\overline\vr_K^{n-1,n})\frac {|{\vr}_K^n-{\vr }_K^{n-1}|^2} 2,
		\label{upwinddissipation_2}\\
		&[D^{m,|\Delta\bu|}_{\rm space}]=\deltat\sum_{n=1}^m\sum_{\sigma=K|L\in{\cal E}_{\rm int}}|\sigma|\vr_\sigma^{n,{\rm up}}\frac{({\bu}^n_K-{{\bu}}^n_L)^2} 2\;|{\bu}^n_\sigma\cdot\bn_{\sigma,K}|,
		\label{upwinddissipation_3}\\
		&[D^{m, |\Delta\vr|}_{\rm space}]=\deltat\sum_{n=1}^m\sum_{\sigma=K|L \in{\cal E}_{\rm int}}|\sigma|H''(\overline \vr^n_{\sigma})\frac{(\vr^n_K-\vr^n_L)^2}2 \;|{\bu}^n_\sigma\cdot{\vc n}_{\sigma,K}|.
		\label{upwinddissipation_4}
\end{align}
 \end{subequations}
\end{Lemma}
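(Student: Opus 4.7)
The plan is to derive the discrete energy equality by suitable testing in the momentum and continuity schemes, exploiting the specific algebraic structure of the time discretization and the upwinding. The identity combines a ``kinetic'' and an ``internal'' energy balance, each producing a time and a space dissipation contribution.

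\textbf{Step 1 (kinetic energy balance).} I would test the momentum scheme \eqref{num3} with $\bv = \bu^n \in V_{h,0}(\Omega_h;R^3)$, so that $\bv_K = \hat\bu^n_K$ and $\bv_\sigma = \bu^n_\sigma$. The discrete time derivative is handled by the elementary identity
\[
(a^n b^n - a^{n-1} b^{n-1})\cdot b^n = \tfrac12 a^n |b^n|^2 - \tfrac12 a^{n-1}|b^{n-1}|^2 + \tfrac12 a^{n-1}|b^n - b^{n-1}|^2 + \tfrac12 (a^n - a^{n-1})|b^n|^2,
\]
applied with $a=\vr_K$, $b=\hat\bu_K$. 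This already extracts the time dissipation $[D^{m,|\Delta\bu|}_{\rm time}]$ plus an extra term of the form $\tfrac12(\vr^n_K - \vr^{n-1}_K)|\hat\bu^n_K|^2$. The viscous terms are already in the correct form, while the convective and pressure contributions are kept for later cancellation.

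\textbf{Step 2 (eliminating the spurious density-jump term; space dissipation on $\bu$).} I would then test the continuity scheme \eqref{num2} with the piecewise constant $\phi_K = \tfrac12 |\hat\bu^n_K|^2 \in Q_h(\Omega_h)$. The discrete time term from this testing is exactly what is needed to cancel the extra $\tfrac12(\vr^n_K - \vr^{n-1}_K)|\hat\bu^n_K|^2$ from Step~1. The convective term produced by testing \eqref{num2} with $\tfrac12|\hat\bu^n|^2$, combined with the upwind convective term in the momentum balance from Step~1, must reorganize into a sum over internal faces of
\[
\tfrac12\,|\sigma|\,\vr_\sigma^{n,\rm up}\,(\hat\bu^n_K - \hat\bu^n_L)^2\,|\bu^n_\sigma\cdot \bn_{\sigma,K}|,
\]
which is precisely $[D^{m,|\Delta\bu|}_{\rm space}]$. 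This is the technical heart of the argument: one writes the momentum flux with upwind $\hat\bu^{n,\rm up}$, the continuity flux with the same upwind $\vr^{n,\rm up}$, uses the decomposition $q^{\rm up}_\sigma(\bu_\sigma\cdot\bn_{\sigma,K}) = q_K [\bu_\sigma\cdot\bn_{\sigma,K}]^+ + q_L[\bu_\sigma\cdot\bn_{\sigma,K}]^-$, and exploits the face-by-face symmetry $\bn_{\sigma,K}=-\bn_{\sigma,L}$ together with the identity $a\cdot b = \tfrac12(|a|^2+|b|^2 - |a-b|^2)$ to generate the quadratic-in-jump dissipation. No contribution comes from $\mathcal{E}_{\rm ext}$ because $\bu^n \in V_{h,0}$.

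\textbf{Step 3 (internal energy balance; space dissipation on $\vr$).} I would then test \eqref{num2} with $\phi_K = H'(\vr^n_K)\in Q_h(\Omega_h)$. For the time term the Taylor identity
\[
H(\vr^n_K) - H(\vr^{n-1}_K) = H'(\vr^n_K)(\vr^n_K - \vr^{n-1}_K) - \tfrac12 H''(\bar\vr^{n-1,n}_K)(\vr^n_K - \vr^{n-1}_K)^2
\]
with some intermediate value $\bar\vr^{n-1,n}_K$ produces the jump of the internal energy $H(\vr)$ plus $[D^{m,|\Delta\vr|}_{\rm time}]$. For the convective flux I would use the discrete by-parts identity together with the analogous Taylor expansion in space, $H'(\vr^n_L) - H'(\vr^n_K) = \tfrac12 H''(\bar\vr^n_\sigma)(\vr^n_L - \vr^n_K)^2/(\vr^n_L-\vr^n_K)$, so that the upwind structure yields, after reorganization,
\[
-\mathop{\textstyle\sum}_{K}\,|\sigma|\,p(\vr^n_K)\,(\bu^n_\sigma\cdot\bn_{\sigma,K})\;+\;[D^{m,|\Delta\vr|}_{\rm space}],
\]
using the thermodynamic identity $\vr H'(\vr) - H(\vr) = p(\vr)$. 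The first term is exactly the pressure contribution from the momentum scheme with opposite sign, so it cancels after adding the kinetic and internal balances.

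\textbf{Step 4 (summation).} Summing Steps 1--3 over $n=1,\dots,m$ gives the announced identity. The existence of the intermediate values $\bar\vr^{n-1,n}_K$ and $\bar\vr^n_\sigma$ is a consequence of the strict convexity of $H$ (guaranteed by $p'>0$) and the mean value theorem. The main obstacle is the bookkeeping in Step~2: showing that the upwind momentum convection and the continuity equation tested by $\tfrac12|\hat\bu^n|^2$ combine precisely into the nonnegative jump dissipation $[D^{m,|\Delta\bu|}_{\rm space}]$ with no residual terms; once this cancellation is carried out face-by-face using the sign split of $\bu^n_\sigma\cdot\bn_{\sigma,K}$, the rest of the computation is routine.
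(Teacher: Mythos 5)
Your outline is the standard derivation of the discrete energy identity (test the momentum scheme with $\bu^n$, the mass scheme with $\tfrac12|\hat\bu^n|^2$ and with $H'(\vr^n)$, then reorganize the upwind fluxes face by face); the paper itself does not reprove this lemma but cites \cite{GGHL2008baro,KARPER}, where precisely this argument is carried out, so your proposal matches the intended proof. One small correction in Step~3: the displayed relation $H'(\vr^n_L)-H'(\vr^n_K)=\tfrac12 H''(\bar\vr^n_\sigma)(\vr^n_L-\vr^n_K)$ is not the identity you need (the mean value theorem carries no factor $\tfrac12$); the correct ingredient is the second-order Taylor formula $H(\vr^n_K)-H(\vr^n_L)-H'(\vr^n_L)(\vr^n_K-\vr^n_L)=\tfrac12 H''(\bar\vr^n_\sigma)(\vr^n_K-\vr^n_L)^2$ applied with the upwind value as base point, which combined with $\vr H'(\vr)-H(\vr)=p(\vr)$ yields both the cancellation with the pressure term and the dissipation $[D^{m,|\Delta\vr|}_{\rm space}]$ with the stated absolute value $|\bu^n_\sigma\cdot\bn_{\sigma,K}|$.
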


We have the following corollary of Lemma \ref{Lestimates} (see \cite[Lemma 4.1, Lemma 4.2]{GHMN}):
\begin{cor}\label{Corollary1}
Under assumptions of Lemma \ref{Lestimates}, we have:
\begin{description}
\item{(1)}
 There exists $c=c(M_0,E_0)>0$ (independent of $n$, $h$ and $\deltat$) such that
\begin{equation}\label{est0}
k\sum_{n=1}^N\int_K|\nabla_x \bu^n|^2\dx\le c,
\end{equation}
\begin{equation}\label{est1}
k\sum_{n=1}^N \|\bu^n\|^2_{L^6(\Omega_h;\Rm^3)}\le c,
\end{equation}
\begin{equation}\label{est2}
{\rm sup}_{n=0,\ldots N}\|\vr^n\hat{\bu^n}^2\|_{L^1(\Omega_h)}\le c.
\end{equation}
 \item{(2)}
\begin{equation}\label{est3}
{\rm sup}_{n=0,\ldots N}\|\vr^n\|_{L^\gamma(\Omega_h)}\le c,
\end{equation}
\item{(3)} If the pair $(r,\bU)$ belongs to the class (\ref{XT}) there is $c=c(M_0,E_0,\underline r,\overline r, \|\bU, \nabla \bU\|_{L^\infty(Q_T;\Rm^{12})})>0$
such that for all $n=1,\ldots,N$,
\begin{equation}\label{est4}
{\rm sup}_{n=0,\ldots N}{\cal E}(\vr^n,\hat\bu^n|\hat r(t_n),\hat{\vc \bU}(t_n))\le c,
\end{equation}
where
$$
{\cal E}(\vr,\bu|z,\vc v)=\int_{\Omega_h}\Big(\vr|\bu-\vc v|^2+E(\vr|z)\Big){\rm d} x,\;\;
E(\vr|z)=H(\vr)-H'(z)(\vr-z)-H(z).
$$
\item{(4)} There exists $c=c(M_0,E_0,\underline r, |p'|_{C^[\underline r,\overline r]})>0$ such that
\begin{equation}\label{est6}
\deltat\sum_{n=1}^m\sum_{\sigma=K|L \in{\cal E}_{\rm int}}
|\sigma|{(\vr^n_K-\vr^n_L)^2}\Big[\frac{1_{\{\overline\vr_\sigma^n\ge 1\}}}{[{\rm max}\{\vr_K,\vr_L \}]^{2-\gamma}}+1_{\{\overline\vr_\sigma^n< 1\}}\Big]\;|{\bu}^n_\sigma\cdot{\vc n}_{\sigma,K}|\le c\;\quad \mbox{if $\gamma\in [1,2)$},
\end{equation}
$$
\deltat\sum_{n=1}^m\sum_{\sigma=K|L \in{\cal E}_{\rm int}}
|\sigma|{(\vr^n_K-\vr^n_L)^2}\;|{\bu}^n_\sigma\cdot{\vc n}_{\sigma,K}|\le c\;\quad \mbox{if $\gamma\ge 2$}
$$
\end{description}
\end{cor}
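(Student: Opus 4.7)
All four assertions follow by combining the discrete energy identity (\ref{denergyinequality}) with the nonnegativity of the four dissipation terms (\ref{upwinddissipation_1})--(\ref{upwinddissipation_4}) and the mass conservation (\ref{masscons}). The starting observation is that the right-hand side of (\ref{denergyinequality}) equals the initial energy
$$E^0 := \sum_{K\in{\cal T}}|K|\Big(\tfrac12\vr^0_K|\bu^0_K|^2+H(\vr^0_K)\Big),$$
which, using the coercivity $H(\vr)\ge -c(1+\vr)$ coming from (\ref{H}) and (\ref{i4}) together with mass conservation, is controlled by $c(M_0,E_0)$. Since every term on the left of (\ref{denergyinequality}) is nonnegative, each of them is separately bounded by this same constant. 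This immediately yields (\ref{est0}) from the viscous dissipation; the discrete Sobolev embedding in Lemma \ref{Lemma2+} then upgrades it to (\ref{est1}), and (\ref{est2}) is just the pointwise-in-time part of the kinetic term after identifying $\sum_K|K|\vr^n_K|\hat\bu^n_K|^2$ with $\int_{\Omega_h}\hat\vr^n|\hat\bu^n|^2\dx$.

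For (\ref{est3}) the hypotheses (\ref{i4}) together with the definition (\ref{H}) yield the coercivity $H(\vr)\ge c_1\vr^\gamma-c_2$ on $[0,\infty)$; the pointwise-in-time bound on $\sum_K|K|H(\vr^n_K)$ combined with mass conservation to absorb the $c_2$ gives the claim. For (\ref{est4}) I would simply expand the relative energy by
$$\vr|\hat\bu-\hat{\bU}|^2\le 2\vr|\hat\bu|^2+2\|\bU\|_{L^\infty}^2\,\vr,$$
while using $\hat r(t_n)\in[\underline r,\overline r]$ (Remark \ref{Rr1}) to bound $|H'(z)|+|H(z)|$ by a constant depending only on $\underline r$, $\overline r$, and the pressure law; all pieces are then controlled by (\ref{est2}), (\ref{est3}), and mass conservation.

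For (\ref{est6}) the starting point is the space dissipation (\ref{upwinddissipation_4}), which features $H''(\overline\vr^n_\sigma)=p'(\overline\vr^n_\sigma)/\overline\vr^n_\sigma$ at an intermediate value $\overline\vr^n_\sigma\in[\min(\vr^n_K,\vr^n_L),\max(\vr^n_K,\vr^n_L)]$. For $\gamma\ge 2$ the function $\vr\mapsto p'(\vr)/\vr$ is bounded below by a positive constant on all of $(0,\infty)$ (the asymptotic $p'(\vr)\ge c\vr^{\gamma-1}$ at infinity combined with $p'(0)>0$ at the origin and continuity on any compact subinterval of $(0,\infty)$), which gives the second inequality of (\ref{est6}) directly. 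For $\gamma\in[1,2)$ I would split the sum according to whether $\overline\vr^n_\sigma<1$ or $\overline\vr^n_\sigma\ge 1$: on the first set $H''(\overline\vr^n_\sigma)\ge c>0$ by the same compactness argument on $(0,1]$, while on the second set the asymptotic lower bound $p'(\overline\vr^n_\sigma)\ge c(\overline\vr^n_\sigma)^{\gamma-1}$ yields $H''(\overline\vr^n_\sigma)\ge c(\overline\vr^n_\sigma)^{\gamma-2}\ge c\max(\vr^n_K,\vr^n_L)^{\gamma-2}$, where the last step uses $\gamma-2<0$ and $\overline\vr^n_\sigma\le\max(\vr^n_K,\vr^n_L)$. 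This matches the two weights appearing in (\ref{est6}).

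\textbf{Main obstacle.} The argument is essentially bookkeeping; the only delicate point is the two-sided behaviour of $H''$ exploited in (\ref{est6}), which requires distinguishing the sub- and super-unit regimes of $\overline\vr^n_\sigma$ and then absorbing the weight by the ``maximum'' using $\gamma-2<0$. One also has to keep track that all implicit constants depend only on the data listed in Theorem \ref{TM1}, in particular $M_0$, $E_0$, $\underline r$, $\overline r$, $|p'|_{C^1[\underline r,\overline r]}$, and the $L^\infty$ norms of $\bU$ and $\nabla\bU$.
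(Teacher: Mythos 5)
Your proposal is correct and follows the same route the paper intends (the paper itself gives no proof here and simply cites \cite{GHMN}, Lemmas 4.1--4.2): everything is read off from the discrete energy identity (\ref{denergyinequality}), the nonnegativity of the dissipation terms (\ref{upwinddissipation}), mass conservation (\ref{masscons}), the discrete Sobolev inequality (\ref{sob1}), and the two-sided behaviour of $H''(\varrho)=p'(\varrho)/\varrho$ for (\ref{est6}). The one imprecision is the claim that every term on the left of (\ref{denergyinequality}) is nonnegative: the contribution $\sum_{K}|K|H(\varrho^m_K)$ need not be, and one must first invoke the uniform lower bound $H(\varrho)\ge -c$ (a consequence of $p\in C^1[0,\infty)$, $p(0)=0$, and (\ref{H})) together with mass conservation applied to the time-$t_m$ energy --- exactly the coercivity you already have in hand for $E^0$ --- before each remaining term can be bounded separately by $c(M_0,E_0)$.
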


\section{Discrete relative energy inequality}
The starting point of our error analysis is the discrete relative energy inequality derived for the numerical scheme (\ref{num1}--\ref{num3}) in \cite[Theorem 5.1]{GHMN}.

\begin{Lemma}\label{Theorem4} 

Let  $(\vr^n,\bu^n)$ be a solution of the discrete problem (\ref{num1}--\ref{num3}) with the pressure $p$ satisfying (\ref{i4}).
Then there holds for all $m=1,\ldots,N$,
\begin{equation}\label{drelativeenergy}
\begin{aligned}
&\sum_{K\in {\cal T}}\frac 12{|K|}\Big(\vr^m_K|{\bu}^m_K-{\bU}^m_{K}|^2-\vr^{0}_K|{\bu}^{0}_K-{\bU}^{0}_{K}|^2\Big)
+ \sum_{K\in {\cal T}}{|K|}\Big(E(\vr_K^m| r_K^m)-E(\vr_K^{0}| r_K^{0})\Big)
\\ &\phantom{\sum_{K\in {\cal T}}}\qquad+\Delta t\sum_{n=1}^m\sum_{K\in {\cal T}}\Big(\mu\int_K|\Grad(\bu^n-\bU^n)|^2 \dx+\frac \mu 3\int_K|{\rm div}(\bu^n-\bU^n)|^2 \dx\Big)
\le \sum_{i=1}^6 T_i,
\end{aligned}
\end{equation}
for any $ 0<r^n\in Q_h(\Omega_h)$, $\bU^n\in V_{h,0}(\Omega_h; \R^3)$, $n=1,\ldots,N$, where
\begin{equation}\label{drelativeenergy*}
\begin{aligned}
&T_1=\Delta t\sum_{n=1}^m\sum_{K\in {\cal T}}\Big(\mu\int_K\Grad\bU^n:\Grad(\bU^n-\bu^n) \dx+\frac \mu 3\int_K{\rm div}\bU^n{\rm div}(\bU^n-\bu^n) \dx\Big),
\\&T_2=\Delta t\sum_{n=1}^m\sum_{K\in{\cal T}}{|K|}\vr_K^{n-1}\frac{{\bU}_{K}^{n}-{\bU}_{K}^{n-1}}{\Delta t}\cdot \Big(\frac{{\bU}_{K}^{n-1} + {\bU}_{K}^{n} }2  -
\bu_K^{n-1}\Big),
\\
&T_3=-\Delta t\sum_{n=1}^m\sum_{K\in{\cal T}}\stik|\sigma|\vr_\sigma^{n,{\rm up}}\Big(\frac{\bU^n_{K}+\bU^n_{L}}2-
\hat{\bu}_{\sigma}^{n,{\rm up}}\Big)\cdot{ {\bU}^n_{K}} [\bu^n_\sigma\cdot\bn_{\sigma,K}],
\\
&T_4=-\Delta t\sum_{n=1}^m\sum_{K\in {\cal T}}\stik |\sigma|p(\vr^n_K)[{\bU}_{\sigma}^{n}\cdot\bn_{\sigma,K}],
\\&T_5=
\Delta t\sum_{n=1}^m\sum_{K\in {\cal T}}\frac{|K|}{\Delta t} ( r^n_K-\vr^n_K)\Big(H'( r^n_K)-H'( r^{n-1}_K)\Big),
\\
&T_6=\Delta t\sum_{n=1}^m\sum_{K\in {\cal T}}\stik |\sigma|\vr_\sigma^{n,{\rm up}}H'( r_K^{ n-1})[\bu^n_\sigma\cdot\bn_{\sigma,K}].
\end{aligned}
\end{equation}
\end{Lemma}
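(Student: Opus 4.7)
\textbf{Proof proposal for Lemma \ref{Theorem4}.} The strategy is to mimic the derivation of the continuous relative energy inequality at the discrete level. Writing
\[
\tfrac12\vr|\bu-\bU|^2 + E(\vr|r) \;=\; \tfrac12\vr|\bu|^2 + H(\vr) \;-\; \vr\bu\cdot\bU \;+\; \tfrac12\vr|\bU|^2 \;-\; H'(r)\vr \;+\; p(r),
\]
where in the last equality I used the thermodynamic relation $rH'(r)-H(r)=p(r)$, one sees that the telescoping sum defining the left-hand side of \eqref{drelativeenergy} can be broken into four blocks: the pure energy block $\sum_K|K|(\tfrac12\vr^n|\bu^n|^2+H(\vr^n))|_0^m$, and three cross blocks involving $-\vr\bu\cdot\bU$, $\tfrac12\vr|\bU|^2$ and $-H'(r)\vr+p(r)$, respectively. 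I plan to produce each block by testing one of the discrete equations \eqref{num2}--\eqref{num3} with an appropriate choice of test function and then summing over $n=1,\dots,m$.

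Concretely, the first block is exactly the statement of the discrete energy identity of Lemma \ref{Lestimates}, which contributes the viscous terms on the left and the four nonnegative numerical dissipation terms \eqref{upwinddissipation_1}--\eqref{upwinddissipation_4}. The cross block $-\vr\bu\cdot\bU$ is obtained by choosing $\bv=\bU^n$ in the discrete momentum equation \eqref{num3}: this produces exactly $-(\vr^m\hat\bu^m\cdot\hat\bU^m-\vr^0\hat\bu^0\cdot\hat\bU^0)$ as a telescoping sum, together with an upwind convective remainder and a $-p(\vr^n_K)\operatorname{div}\bU^n$ term (after using \eqref{L1-1}), which are exactly the pieces appearing in $T_3$ and $T_4$; the remaining terms combine with the viscous term from the energy identity to form $T_1$ (the derivative-difference structure $\nabla(\bu-\bU):\nabla\bU$). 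To obtain the cross block $\tfrac12\vr|\bU|^2$ one uses $\phi_K=\tfrac12|\hat\bU^n_K|^2$ in the discrete continuity equation \eqref{num2}; a discrete integration by parts combined with the previous step rearranges the remainders into the telescoped form $T_2$ involving the discrete time-derivative of $\bU$ multiplied by a midpoint. Finally, for the block $-H'(r)\vr+p(r)$ one takes $\phi_K=H'(r^{n-1}_K)$ in \eqref{num2}; telescoping and using again $rH'(r)-H(r)=p(r)$ produces $T_5$ (the time-difference of $H'(r)$ against $r-\vr$) and $T_6$ (the convective flux involving $H'(r)$ and the upwind density).

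The key technical obstacle I anticipate is the correct bookkeeping of the \emph{upwind} convective fluxes. At every face $\sigma=K|L$, the convective contributions coming from the momentum equation and from the continuity equation must be re-symmetrised so that the resulting sums reproduce exactly the quantities $\hat\bu^{n,{\rm up}}_\sigma$ and $\vr^{n,{\rm up}}_\sigma$ weighted by $(\bU^n_K+\bU^n_L)/2$ inside $T_3$, while the leftover (nonnegative) upwind dissipation has to be absorbed into $[D^{m,|\Delta\bu|}_{\rm space}]$ appearing on the left-hand side. A secondary subtlety is the difference between Crouzeix--Raviart-valued averages $\hat\bU^n_K$ on tetrahedra and the face values $\bU^n_\sigma$ used to define fluxes: these are related by \eqref{vhat} and \eqref{vtilde}, and one must carefully exploit the identities \eqref{L1-1} and the defining property $\int_\sigma\Pi^V_h[\phi]\dS=\int_\sigma\phi\dS$ to commute projections with face integrals without picking up uncontrolled errors.

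Summing the four blocks thus obtained yields an \emph{equality} with a remainder $\sum_{i=1}^6 T_i$ plus the extra nonnegative dissipation terms $[D^{m,\cdot}_{\rm time/space}]$; discarding the latter on the left gives the announced inequality. I would present this as a chain of discrete identities, one per test-function choice, and conclude by direct addition; the statement is in any case exactly Theorem~5.1 of \cite{GHMN}, so the proof can be kept short by invoking that reference after indicating the organisation above.
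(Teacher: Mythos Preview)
Your proposal is correct and aligns with the standard derivation; the paper itself gives no proof of this lemma, stating only that it is ``the discrete relative energy inequality derived for the numerical scheme (\ref{num1}--\ref{num3}) in \cite[Theorem 5.1]{GHMN}''. Your outline---decomposing the relative energy via $rH'(r)-H(r)=p(r)$, invoking the energy identity of Lemma~\ref{Lestimates}, and testing \eqref{num3} with $\bv=\bU^n$ and \eqref{num2} with $\phi_K=\tfrac12|\bU^n_K|^2$ and $\phi_K=H'(r^{n-1}_K)$---is exactly the route taken in \cite{GHMN}, so your final remark that one may simply cite that reference is precisely what the present paper does.
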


\section{Approximate discrete relative energy inequality}

In this section, we transform the right hand side of the relative energy inequality (\ref{drelativeenergy}) to a form
that is more convenient for the comparison with the strong solution. This transformation is given in the following lemma.

\begin{Lemma}[Approximate relative energy inequality]\label{refrelenergy}
Let  $(\vr^n,\bu^n)$ be a solution of the discrete problem (\ref{num1}--\ref{num3}), where the pressure satisfies (\ref{i4}) with $\gamma\ge 3/2$.
Then there exists
\[
c=c\Big( M_0,E_0,\underline r, \overline r, |p'|_{{C^1}[\underline r,\overline r]}, \|(\partial_t r, \nabla r, \bV,
\partial_t\bV, \nabla\bV)\|_{L^\infty(Q_T;\Rm^{18})},
\]
 \[
 \|\partial_t^2r\|_{L^1(0,T;L^{\gamma'}(\Omega))}, \|\partial_t\nabla r\|_{L^2(0,T;L^{6\gamma/{5\gamma-6}}(\Omega;\Rm^3))}
\Big)>0,
\]
such that for all $m=1,\ldots,N$, we have:
\begin{equation}\label{relativeenergy-}
\begin{aligned}
&\int_{\Omega_h}\Big(\vr^m|\hat{\bu}^m-\hat{\bV}_{h,0}^m|^2+ E(\vr^m| \hat r^m)\Big){\rm d}x
-\int_{\Omega_h}\Big(\vr^{0}|\hat{\bu}^{0}-\hat{\bV}_{h,0}^{0}|^2+E(\vr^{0}|\hat r^{0})\Big){\rm d} x
\\ &
+\Delta t\sum_{n=1}^m\sum_{K\in {\cal T}}\Big(\mu\int_{K}|\Grad(\bu^n-\bV_{h,0}^n)|^2 \dx+\frac \mu 3\int_K|{\rm div}(\bu^n-\bV_{h,0}^n)|^2 \dx\Big)\le \sum_{i=1}^6 S_i+ {  R^m_{h,\Delta t}} { + G^m},
\end{aligned}
\end{equation}
for any couple $(r,\vc V)$ belonging to the class (\ref{XT}), where
\begin{equation}\label{relativeenergy-*}
\begin{aligned}
& S_1= \Delta t \sum_{n=1}^m\sum_{K\in {\cal T}}\Big(\mu\int_{K}\Grad\bV_{h,0}^n:\Grad(\bV_{h,0}^n-\bu^n) \dx+\frac \mu 3\int_{K}{\rm div}\bV_{h,0}^n{\rm div}(\bV_{h,0}^n-\bu^n) \dx\Big),
	\\& S_2=\Delta t\sum_{n=1}^m\sum_{K\in{\cal T}}|K|\vr_K^{n-1}\frac{{\bV}_{h,0,K}^{n}-{\bV}_{h,0,K}^{n-1}}{\Delta t}\cdot \Big({\bV}_{h,0,K}^{n}   -\bu_K^{n}\Big),
	\\ &S_3={  \Delta t\sum_{n=1}^m\sum_{K\in{\cal T}}\sum_{\sigma\in {\cal E}(K)}|\sigma|\vr_\sigma^{n,{\rm up}}
\Big(\hat{\bV}^{n,{\rm up}}_{h,0,\sigma}-\hat{\bu}^{n,{\rm up}}_{\sigma}\Big)\cdot\Big(\bV^n_{h,0,\sigma}-{\bV}^n_{h,0,K}\Big) \hat\bV_{h,0,\sigma}^{n,{\rm up}}\cdot\bn_{\sigma,K}},
%
	\\& S_4=-\Delta t \sum_{n=1}^m\int_{\Omega_h} p(\vr^n)\dv\bV^n\dx,\\
	&S_5=\Delta t \sum_{n=1}^m\int_{\Omega_h} ( \hat r^n-\vr^n)\frac{p'(\hat r^n)}{\hat r^n} [\partial_t r]^n\dx,
	\\ & S_6= -\Delta t\sum_{n=1}^m\int_{\Omega_h}\frac{\vr^n}{\hat r^n} p'(\hat r^n) { \bu^n}\cdot\nabla r^n\dx,	
\end{aligned}
\end{equation}
and
\begin{equation}\label{A1}
{ |G^m|\le c\,\Delta t\sum_{n=1}^m{\cal E}(\vr^n,\hat\bu^n\Big| \hat r^n,  \hat\bV^n),}\;\;
{ |R^m_{h,\Delta t}|\le c (\sqrt{\Delta t} + h^a)},
\end{equation}
with the power $a$  defined in (\ref{M2}) and with the functional ${\cal E}$ introduced in (\ref{est4}).
\end{Lemma}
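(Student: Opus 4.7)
The natural starting point is Lemma \ref{Theorem4} applied with the test couple $(r^n, \bU^n) = (\hat r^n, \bV^n_{h,0})$, so that the left-hand side of (\ref{drelativeenergy}) already matches the left-hand side of (\ref{relativeenergy-}) after rewriting the piecewise-constant sums as integrals over $\Omega_h$ (here using that $\Pi^Q_h$ is an $L^2$-projection so $\sum_K |K|\, \vr^m_K |\bu^m_K - \bV^m_{h,0,K}|^2 = \int_{\Omega_h} \vr^m |\hat{\bu}^m - \hat{\bV}^m_{h,0}|^2\dx$ up to terms coming from the $Q_h$-projection of $\bV^m_{h,0}$, which differ by standard interpolation errors). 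Hence the real work is to transform each $T_i$ into $S_i$ plus an admissible remainder. Throughout, ``admissible remainder'' means one of three things: a quantity bounded by $c(\sqrt{\Delta t} + h^a)$ (going into $R^m_{h,\Delta t}$); a quantity bounded by $c\,\Delta t\sum_{n\le m}{\cal E}(\vr^n,\hat\bu^n|\hat r^n,\hat\bV^n)$ (going into $G^m$); or a term that can be absorbed by the dissipative terms on the left-hand side using Young's inequality, the energy bounds of Corollary \ref{Corollary1}, and the jump dissipation (\ref{upwinddissipation_3})--(\ref{upwinddissipation_4}).

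The plan is to treat the six terms in order. \emph{Term $T_1$} coincides literally with $S_1$, so nothing to do. \emph{Term $T_2 \to S_2$}: the difference involves $\tfrac12(\bV^{n}_{h,0,K}+\bV^{n-1}_{h,0,K}) - \bV^n_{h,0,K}$ and $\bu^{n-1}_K - \bu^n_K$; the first is $O(\Delta t \|\partial_t \bV\|_\infty)$ by Taylor expansion and yields a clean $R$-type contribution using $\vr^{n-1}$-mass control, while the second is handled by Cauchy--Schwarz against the numerical time dissipation $[D^{m,|\Delta\bu|}_{\rm time}]$ from (\ref{upwinddissipation_1}), absorbed by a small fraction of $\sqrt{\Delta t}$. \emph{Term $T_4 \to S_4$}: here I use the Crouzeix--Raviart identity (\ref{L1-1}) applied with $\bv = \bV^n$ and $q = p(\vr^n)$, together with Corollary \ref{CN1} to replace $\Pi^V_h[\bV^n]$ by $\Pi^V_{h,0}[\bV^n]$ with an $O(h^2 \|\nabla \bV\|_\infty)$ error on the elements touching $\partial\Omega_h$ (whose total volume is $O(h)$ by Lemma \ref{LN1}), yielding an $R$-type remainder. \emph{Term $T_5 \to S_5$}: since $H''(z) = p'(z)/z$, a second-order Taylor expansion in time gives $H'(\hat r^n_K) - H'(\hat r^{n-1}_K) = \Delta t \frac{p'(\hat r^n_K)}{\hat r^n_K}[\partial_t r]^n_K + O(\Delta t \int_{t_{n-1}}^{t_n}\!|\partial_t^2 r|)$; the remainder is controlled in $L^1_t L^{\gamma'}_x$ against $\|\hat r^n - \vr^n\|_{L^\gamma}$ (bounded uniformly in $n$ by Corollary \ref{Corollary1}).

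The main technical effort goes into \emph{terms $T_3 \to S_3$ and $T_6 \to S_6$}, which are genuinely nonlinear in the upwind fluxes. For $T_6$, the strategy is to use the discrete mass equation (\ref{num2}) with the test function $\phi_K = H'(\hat r^{n-1}_K)$: a standard discrete summation-by-parts rearranges $\sum_K \sum_\sigma |\sigma|\vr_\sigma^{n,\rm up} H'(\hat r^{n-1}_K)[\bu^n_\sigma\cdot\bn_{\sigma,K}]$ into a term involving jumps $H'(\hat r^{n-1}_K)-H'(\hat r^{n-1}_L)$ on internal faces, which by a mean-value argument becomes $\frac{p'(\tilde r)}{\tilde r}(\hat r^{n-1}_K - \hat r^{n-1}_L)$; the latter is estimated against the mesh-smoothness bound $\|\nabla r\|_\infty$ (yielding a factor $h$) together with Lemma \ref{Lemma2} and (\ref{est6}), after which one passes from $\hat r^{n-1}$ to $\hat r^n$ at the cost of an $R$-type remainder controlled by $\|\partial_t \nabla r\|_{L^2_t L^{6\gamma/(5\gamma-6)}_x}$, and writes the leading piece as $\int_{\Omega_h}\frac{\vr^n}{\hat r^n}p'(\hat r^n)\bu^n\cdot\nabla r^n\dx$, recovering $S_6$. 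For $T_3$, the strategy is analogous: I expand $\bU^n_K = \bV^n_{h,0,K}$ around the face value $\bV^n_{h,0,\sigma}$, use Lemma \ref{Lemma2} and the jump estimate (\ref{tbound}) to rewrite the flux in the target form, and collect all the resulting Cauchy--Schwarz errors against the space dissipations (\ref{upwinddissipation_3})--(\ref{upwinddissipation_4}), the bound (\ref{est1}) on $\|\bu^n\|_{L^6}$, and the regularity of $\bV$.

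The hard part will be book-keeping the boundary-layer defects arising because $\Omega_h \neq \Omega$: the extension from Lemma \ref{LEx1} allows $\bV, r$ to live on all of $\R^3$, but the projection mismatch $\Pi^V_h - \Pi^V_{h,0}$ appears in $T_4$ (through the divergence identity) and implicitly in $T_3$ (through the upwind fluxes on $\partial\Omega_h$). These are precisely the terms controlled by Corollary \ref{CN1} and the $O(h^2)$ distance bound of Lemma \ref{LN1}; summed over the $O(h^{-2})$ boundary elements and weighted by $|\sigma| = O(h^2)$ or $|K| = O(h^3)$, they contribute at worst $O(h)$ to $R^m_{h,\Delta t}$, which is compatible with (\ref{M2}). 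The exponent $a$ in (\ref{M2}) finally arises from the interpolation of $\vr^n - \hat r^n$ between $L^\gamma$ (via (\ref{est3})) and $L^2$ (via the relative-energy functional $\cal E$, exploiting $\gamma \geq 3/2$ so that the pressure defect $E(\vr|r)$ dominates $|\vr - r|^2$ on bounded densities), the worst case being the $\gamma\in[3/2,2]$ regime that produces $a = (2\gamma-3)/\gamma$.
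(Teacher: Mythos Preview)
Your outline follows the paper's approach closely: start from Lemma~\ref{Theorem4} with $(r^n,\bU^n)=(\hat r^n,\bV^n_{h,0})$ and transform each $T_i$ into $S_i$ plus admissible remainders. The treatments you sketch for $T_1,T_2,T_4,T_5$ are essentially those of the paper. However, there are a few genuine inaccuracies worth flagging.

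\textbf{Origin of the exponent $a$.} Your last paragraph attributes $a=(2\gamma-3)/\gamma$ to ``interpolation of $\vr^n-\hat r^n$ between $L^\gamma$ and $L^2$''. This is not where $a$ comes from. In the paper the critical remainder is $R_{3,2}$, arising in the $T_3\to S_3$ transformation when one replaces the face velocity $\bu^n_\sigma$ by the upwind cell average $\hat\bu^{n,\rm up}_\sigma$ in the transport flux. That error is estimated by H\"older with exponents $\frac12+\frac1{2\gamma_0}+\frac1q=1$, $\gamma_0=\min\{\gamma,2\}$, and then by the interpolation inequalities (\ref{interpol1})--(\ref{interpol2}) on the \emph{velocity} difference $\bu^n_\sigma-\bu^n_K$, producing the factor $h^{(2\gamma_0-3)/(2\gamma_0)}$; squared (from two velocity factors) this gives $h^a$. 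No density interpolation is involved at this step.

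\textbf{Term $T_6$.} You invoke ``the discrete mass equation (\ref{num2}) with test function $\phi_K=H'(\hat r^{n-1}_K)$'', but this is neither needed nor used. The expression for $T_6$ is already a face sum; one simply inserts the face value $H'(r^{n-1}_\sigma)$ (which cancels telescopically across $\sigma=K|L$ since $\bu^n_\sigma\cdot\bn_{\sigma,K}=-\bu^n_\sigma\cdot\bn_{\sigma,L}$), then replaces $\vr^{n,\rm up}_\sigma$ by $\vr^n_K$ with an error controlled by (\ref{est6}), Taylor-expands $H'$, uses the identity $\int_K\nabla r^{n-1}=\sum_\sigma|\sigma|(r^{n-1}_\sigma-r^{n-1}_K)\bn_{\sigma,K}$ to pass to a volume integral, and finally updates $r^{n-1}\to r^n$. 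The mass equation plays no role.

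\textbf{Minor points.} For $T_3$ the paper does not use the jump bound (\ref{tbound}) or the space dissipations (\ref{upwinddissipation_3})--(\ref{upwinddissipation_4}); the remainders are handled directly by H\"older and the interpolation lemmas. Also, your third category of ``admissible remainder'' (absorption into the left-hand dissipation via Young) does not occur in this lemma; all remainders here go into $R^m_{h,\Delta t}$ or $G^m$, and the absorption argument only appears later in Lemma~\ref{Gronwall}. Finally, in $T_4$ the relevant bound from Corollary~\ref{CN1} is on the \emph{gradient} of $\Pi^V_h-\Pi^V_{h,0}$, which is $O(h)$ (not $O(h^2)$); combined with the uniform $L^1$ bound on $p(\vr^n)$ this yields $|R_{4,1}|\lesssim h$.
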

\noindent
\bProof
\label{6.0}
We take as test functions $\vc U^n=\vc V^n_{h,0}$ and $r^n=\hat r^n$ in the discrete relative energy inequality
(\ref{drelativeenergy}). We keep the left hand side and the first term (term $T_1$) at the right hand side as they stay. The transformation of the remaining terms at the right hand side (terms $T_2-T_6$) is performed in the following steps:
\\ \\
{\bf Step 1:} {\it Term $T_2$.}
\label{6.1}
We have
\begin{equation}\label{T2}
T_2=T_{2,1}+ R_{2,1}+R_{2,2},\mbox{ with }T_{2,1}=\deltat\sum_{n=1}^m\sum_{K\in{\cal T}}|K|\vr_K^{n-1}\frac{{\bV}_{h,0,K}^{n}-{\bV}_{h,0,K}^{n-1}}{\deltat}\cdot \Big({\bV}_{h,0,K
}^{n}   - \bu_K^{n}\Big),
\end{equation}
and
\[
 R_{2,1}=\deltat\sum_{n=1}^m\sum_{K\in{\cal T}}R^{n,K}_{2,1},\;\; R_{2,2}=\deltat\sum_{n=1}^m R_{2,2}^n,
\]
where
$$
 R_{2,1}^{n,K}=
-\frac {|K|}2
\vr_K^{n-1}\frac{({\bV}_{h,0,K}^{n}-{\bV}_{h,0,K}^{n-1})^2}{\deltat}={ -\frac {|K|}2
\vr_K^{n-1}\frac{([{\bV}^{n}-{\bV}^{n-1}]_{h,0,K})^2}{\deltat}},
$$
and
$$
	R^n_{2,2}=
 	- \sum_{K\in{\cal T}}|K|\vr_K^{n-1}\frac{{\bV}_{h,0,K}^{n}-{\bV}_{h,0,K}^{n-1}}{\deltat}\cdot \Big({\bu}_{K}^{n-1}   		 -\bu_{K}^{n}\Big).
$$
We may write by virtue of the first order Taylor formula applied to function $t\mapsto \vc V(t,x)$,
$$
\Big|\frac{[{\bV}^{n}-{\bV}^{n-1}]_{h,0,K}}{\deltat}\Big|=
\Big|\frac 1{|K|}\int_K\Big[\frac 1\deltat \Big[\int_{t_{n-1}}^{t_n} \partial_t\vc V(z,x) {\rm d} z\Big]_{h,0}\Big]{\rm d}x\Big|
$$
$$
=\Big|\frac 1{|K|}\int_K\Big[\frac 1\deltat \int_{t_{n-1}}^{t_n} [\partial_t\vc V(z) \Big]_{h,0}(x){\rm d} z\Big]{\rm d}x\Big|\le \|[\partial_t\vc V ]_{h,0}\|_{L^\infty(0,T;L^\infty(\Omega;\Rm^3))}\le \|\partial_t\vc V \Big\|_{L^\infty(0,T;L^\infty(\Omega;\Rm^3))},
$$
where we have used the property (\ref{ddd}) of the projection $\Pi_{h,0}^V$ on the space $V_{h,0}{ (\Omega_h)}$.
Therefore, thanks to the mass conservation (\ref{masscons}), we get
\begin{equation}\label{R2.1}
|R^{n,K}_{2,1}|\le\frac {M_0} 2|K|\deltat\|\partial_t\bV\|^2_{L^\infty(0,T; L^{\infty}(\Omega;\Rm^3))}.
\end{equation}
To treat term $R^n_{2,2}$ we use the discrete H\"older inequality  and identity (\ref{masscons}) in order to get
\[
|R^n_{2,2}|\le \deltat\; c M_0 \|\partial_t\bV\|^2_{L^\infty(0,T;W^{1,\infty}(\Omega;\Rm^3)}+
c M_0^{1/2}\Big(\sum_{K\in{\cal T}}|K|\vr_K^{n-1}|{\bu}_{K
}^{n-1}   -
\bu_{K}^{n}|^2\Big)^{1/2} \|\partial_t\bV\|_{L^\infty(0,T;L^{\infty}(\Omega;\Rm^3))};
\]
whence, by virtue of  estimate (\ref{denergyinequality}) for the upwind dissipation term (\ref{upwinddissipation_1}), one obtains
\begin{equation}\label{R2.2}
|R_{2,2}|\le \sqrt{\deltat} \,c(M_0, E_0, \|\partial_t\bV\|_{L^\infty(Q_T;\Rm^{3})}).
\end{equation}

\vspace{2mm}\noindent
{\bf Step 2:} {\it Term $T_3$.}
\label{6.2}
Employing the definition (\ref{upwind1}) of upwind quantities, we easily establish that
\begin{align*}
& T_3= T_{3,1} + R_{3,1}, \\
& \mbox{with }T_{3,1}= \deltat\sum_{n=1}^m \sum_{K\in{\cal T}}\sum_{\sigma\in {\cal E}(K)}|\sigma|\vr_\sigma^{n,{\rm up}}\Big(\hat {\bu}_{\sigma}^{n,{\rm up}}-
\hat {\bV}^{n, {\rm up}}_{h,0,\sigma}\Big)\cdot{\bV}^n_{h,0,K} \bu_\sigma^n\cdot\bn_{\sigma,K}, \quad
R_{3,1}=
\deltat\sum_{n=1}^m { \sum_{\sigma \in {\cal E}_{\rm int}}R_{3,1}^{n,\sigma}}, \\
&\mbox{and }{ R_{3,1}^{n,\sigma}}= |\sigma|\vr_K^n \frac{|\bV_{h,0,K}^n-\bV_{h,0,L}^n|^2}2\,[\bu^n_\sigma\cdot\bn_{\sigma,K}]^+
+ |\sigma|\vr_L^n \frac{|\bV_{h,0,L}^n-\bV_{h,0,K}^n|^2}2\,[\bu^n_\sigma\cdot\bn_{\sigma,L}]^+, \; \forall \sigma=K|L \in {\cal E}_{\rm int}.
\end{align*}
{ Writing
$$
\bV^n_{h,0,K}-\bV^n_{h,0,L}= [\bV^n_{h,0}-\bV^n_{h}]_K+\bV^n_{h,K}-\bV^n_h +\bV^n_h-\bV^n_{h,\sigma}
$$
$$
+\bV^n_{h,\sigma}-\bV^n_h+\bV^n_{h}-\bV^n_{h,L}+[\bV^n_h-\bV^n_{h,0}]_L,\; \sigma=K|L\in {\cal E}_{\rm int},
$$
}
and employing estimates (\ref{N2-2}) (if $K\cap\partial\Omega_h=\emptyset$), (\ref{N2-3}) (if $K\cap\partial\Omega_h\neq\emptyset$) to evaluate the $L^\infty$-norm of the first term, (\ref{L2-1}) then (\ref{L1-3})$_{s=1}$ and (\ref{L2-2}) after (\ref{L1-3})$_{s=1}$ to evaluate the $L^\infty$-norm of the second and third terms,  and performing the same tasks at the second line, we get
\begin{equation}\label{help1}
\|\bV^n_{h,0,K}-\bV^n_{h,0,L}\|_{L^{\infty}(K\cup L;\Rm^3)}\leq c h \|\nabla\bV\|_{L^{\infty}(K\cup L;\Rm^9)};
\end{equation}
consequently
\[
	{ |R_{3,1}^{n,\sigma}|}\le h^2\;c \|\nabla\bV\|^2_{L^\infty ((0,T)\times\Omega;\Rm^9)} |\sigma|(\vr^n_K+\vr^n_L) |\bu^n_\sigma|,  \; \forall \sigma=K|L \in {\cal E}_{\rm int},
\]
whence
\begin{equation}\label{R3.1}
\begin{aligned}
  |R_{3,1}| & \le h\;c \|\nabla\bV\|^2_{L^\infty ((0,T)\times\Omega;\Rm^9)}\Big(\sum_{K\in{\cal T}}\sum_{\sigma=K|L\in {\cal E}(K)} h|\sigma|(\vr^n_K+\vr^n_L)^{6/5}\Big)^{5/6} \times\\
&\Big[\deltat \sum_{n=1}^m\Big(\sum_{K\in{\cal T}}\sum_{\sigma\in {\cal E}(K)}h|\sigma||\bu^n_\sigma|^6\Big)^{1/3}\Big]^{1/2}
\le h \; c(M_0,E_0,\|\nabla\bV\|_{L^\infty(Q_T;\Rm^{9})}),
\end{aligned}
\end{equation}
provided $\gamma\ge 6/5$,
thanks to  the discrete H\"older inequality, the equivalence relation (\ref{reg1}), the equivalence of norms (\ref{norms1}) and energy bounds  listed in Corollary \ref{Corollary1}.

Clearly, for each face $\sigma=K|L\in {\cal E}_{\rm int}$,
$
\bu_{\sigma}^n\cdot\bn_{\sigma,K}+\bu_\sigma^n\cdot{\vc n}_{\sigma,L}=0;$ whence, finally
\begin{equation}\label{T3.1}
T_{3,1}= \deltat\sum_{n=1}^m\sum_{K\in{\cal T}}\sum_{\sigma\in {\cal E}(K)}|\sigma|\vr_\sigma^{n,{\rm up}}\Big(\hat
{\bu}_{\sigma}^{n,{\rm up}}-\hat{\bV}^{n,{\rm up}}_{h,0,\sigma}\Big)\cdot\Big({\bV}^n_{h,0,K}-\bV^n_{h,0,\sigma}\Big) \bu_\sigma^n\cdot\bn_{\sigma,K}.
\end{equation}
Before the next transformation of term $T_{3,1}$, we realize that
$$
\vc V^n_{h,0,K}-\vc V^n_{h,0,\sigma}=[\vc V^n_{h,0}-\vc V^n_{h}]_K+ \vc V^n_{h,K}-\vc V^n_h + \vc V^n_{h}-\vc V^n_{h,\sigma}+ [\vc V^n_{h}-\vc V^n_{h,0}]_\sigma;
$$
whence by virtue of (\ref{N2-2}--\ref{N2-3}), (\ref{L2-1}--\ref{L2-2}) and (\ref{L1-3})$_{s=1}$, similarly as
in (\ref{help1}),
\begin{equation}\label{help2}
\|\bV^n_{h,0,K}-\bV^n_{h,0,\sigma}\|_{L^{\infty}(K;\Rm^3)}\leq c h \|\Grad\bV\|_{L^\infty(0,T; L^\infty(\Omega:R^3))},\;\;\sigma\subset K.
\end{equation}
{ Let us now decompose the  term $ T_{3,1}$ as
\begin{equation*}
 \begin{aligned}
&T_{3,1}= T_{3,2}+ R_{3,2}, \mbox{ with } { R_{3,2}=\deltat\sum_{n=1}^mR^{n}_{3,2}}, \\
&T_{3,2}= \deltat\sum_{n=1}^m\sum_{K\in{\cal T}}\sum_{\sigma\in {\cal E}(K)}|\sigma|\vr_\sigma^{n,{\rm up}}
\Big(\hat{\bV}^{n,{\rm up}}_{h,0,\sigma}-\hat{\bu}^{n,{\rm up}}_{\sigma}\Big)\cdot\Big(\bV^n_{h,0,\sigma}-{\bV}^n_{h,0,K}\Big) \hat\bu_\sigma^{n,{\rm up}}\cdot\bn_{\sigma,K},   \mbox{ and }\\
&{ R^{n}_{3,2}}=\sum_{K\in{\cal T}}\sum_{\sigma\in {\cal E}(K)} |\sigma|\vr_\sigma^{n,{\rm up}}
\Big(\hat{\bV}^{n,{\rm up}}_{h,0,\sigma}-\hat{\bu}^{n,{\rm up}}_{\sigma}\Big)\cdot\Big(\bV^n_{h,0,\sigma}-{\bV}^n_{h,0,K}\Big) \Big(\bu_{\sigma}^n-\hat\bu_\sigma^{n,{\rm up}}\Big)\cdot\bn_{\sigma,K}.
 \end{aligned}
\end{equation*}
By virtue of { discrete} H\"older's inequality and estimate (\ref{help2}), we get
\begin{equation*}
 \begin{aligned}
	{ |R^{n}_{3,2}|} & \le
	c \|\nabla\vc  V\|_{L^\infty(Q_T;\Rm^9)}\Big(\sum_{K\in{\cal T}}\sum_{\sigma\in {\cal E}(K)}h|\sigma|
\vr_\sigma^{n,{\rm up}}\Big|\hat\bu_\sigma^{n,{\rm up}}-\hat\bV_{h,0,\sigma}^{n,{\rm up}}\Big|^2\Big)^{1/2}\\
& \times
\Big(\sum_{K\in{\cal T}}\sum_{\sigma\in {\cal E}(K)}h|\sigma|
|\vr_\sigma^{n,{\rm up}}|^{\gamma_0}\Big)^{1/(2{\gamma_0})}\Big(\sum_{K\in{\cal T}}\sum_{\sigma\in {\cal E}(K)}h|\sigma|
\Big|\bu_\sigma^{n}-\hat\bu_\sigma^{n,{\rm up}}\Big|^q\Big)^{1/q},
 \end{aligned}
\end{equation*}
where $\frac 12+\frac 1{2\gamma_0}+\frac 1q=1$, $\gamma_0={\rm min}\{\gamma, 2\}$ and $\gamma\ge 3/2$. For the sum in the last term of the above product, we have
$$
\sum_{K\in{\cal T}}\sum_{\sigma\in {\cal E}(K)}h|\sigma|
\Big|\bu_\sigma^{n}-\hat\bu_\sigma^{n,{\rm up}}\Big|^q\le c \sum_{K\in{\cal T}}\sum_{\sigma\in {\cal E}(K)}h|\sigma|
|\bu_\sigma^{n}-\bu_K^{n}|^q
$$
$$
\le c \Big(\sum_{K\in {\cal T}}\sum_{\sigma\in {\cal E}(K)}
\Big(\|\bu_\sigma^{n}-\bu^n\|_{L^q(K;\Rm^3)}^q+
\sum_{K\in {\cal T}}
\|\bu^n-\bu_K^{n}\|_{L^q(K;\Rm^3)}^q\Big)
\le c h^{\frac {2\gamma_0-3}{2\gamma_0}q}\Big(\sum_{K\in {\cal T}}\|\nabla_x\bu^n\|^2_{L^2(K;\Rm^9}\Big)^{q/2},
$$
where we have used the definition (\ref{upwind1}), the discrete Minkowski inequality, interpolation inequalities
(\ref{interpol1}--\ref{interpol2}) and the discrete `imbedding' inequality (\ref{dod1*}). Now we can go back to the estimate of $R_{3,2}^n$ taking into account the upper bounds
(\ref{est0}), (\ref{est3}--\ref{est4}), in order to get
 \begin{equation}\label{R3.2}
 |R_{3,2}|\le h^a\;c (M_0,E_0,\|\nabla\vc  V\|_{L^\infty(Q_T;\Rm^9)}),
 \end{equation}
 provided $\gamma\ge 3/2$, where $a$ is given in (\ref{A1}).

 Finally, we rewrite term $T_{3,2}$ as
 \begin{equation}\label{T3}
 \begin{aligned}
&T_{3,2}= T_{3,3}+ R_{3,3}, \mbox{ with } { R_{3,3}=\deltat\sum_{n=1}^mR^{n}_{3,3}}, \\
&T_{3,3}= \deltat\sum_{n=1}^m\sum_{K\in{\cal T}}\sum_{\sigma\in {\cal E}(K)}|\sigma|\vr_\sigma^{n,{\rm up}}
\Big(\hat{\bV}_{h,0,\sigma}^{n,{\rm up}}-\hat{\bu}^{n,{\rm up}}_{\sigma}\Big)\cdot\Big(\bV^n_{h,0,\sigma}-{\bV}^n_{h,0,K}\Big) \hat\bV_{h,0,\sigma}^{n,{\rm up}}\cdot\bn_{\sigma,K},   \mbox{ and }\\
&{ R^{n}_{3,3}}=\sum_{K\in{\cal T}}\sum_{\sigma\in {\cal E}(K)} |\sigma|\vr_\sigma^{n,{\rm up}}
\Big(\hat{\bV}_{h,0,\sigma}^{n,{\rm up}}-\hat{\bu}^{n,{\rm up}}_{\sigma}\Big)\cdot\Big(\bV^n_{h,0,\sigma}-{\bV}^n_{h,0,K}\Big) \Big(\hat\bu_\sigma^{n,{\rm up}}-\hat \bV_{h,0,\sigma}^{n,{\rm up}}\Big)\cdot\bn_{\sigma,K};
 \end{aligned}
\end{equation}
whence
\begin{equation}\label{R3.3}
|R_{3,3}|\le c(\|\nabla\bV\|_{L^\infty(Q_T,\Rm^9)})\; \deltat\sum_{n=1}^m{\cal E}(\vr^n,\hat\vu^n\,|\,\hat r^n,\hat \bV_{h,0}^n).
\end{equation}

 }

\vspace{2mm}\noindent
{\bf Step 3:} {\it Term $T_4$.}
\label{6.3}
Integration  by parts over each $K\in {\cal T}$ gives
$$
T_4=-\deltat \sum_{n=1}^m\sum_{K\in {\cal T}}{{\rm \int_K}} p(\vr_K^n){\rm div}_x\bV_{h,0}^n\dx .
$$
 We may write
 \begin{equation}\label{help3}
 \|{\rm div}_x(\bV^n_{0,h}-\bV^n_h)\|_{L^\infty(K)}\leq  c h \|\Grad \bV\|_{L^\infty(0,T;L^\infty(\Omega;\Rm^9))},
 \end{equation}
 where we have used (\ref{N2-2}--\ref{N2-3}). Therefore, employing identity  (\ref{L1-1}) we obtain
 \begin{equation}\label{T4}
 T_4=T_{4,1} + R_{4,1},\quad T_{4,1}= -\deltat \sum_{n=1}^m\sum_{K\in {\cal T}}{{\rm \int_K}} p(\vr_K^n){\rm div}_x\bV^n\dx,
 \end{equation}
 $$
 R_{4,1}= -\deltat \sum_{n=1}^m\sum_{K\in {\cal T}}{{\rm \int_K}} p(\vr_K^n){\rm div}_x(\bV^n_{h,0}-\bV_{h}^n)\dx.
 $$
Due to (\ref{i4}) and (\ref{est3}), $p(\vr^n)$ is bounded uniformly in $L^\infty(L^1(\Omega))$; employing this fact and (\ref{help3}) we immediately get
\begin{equation}\label{R4.1}
|R_{4,1}|\le h\; c(E_0, M_0,\|\nabla \vc V\|_{L^\infty(0,T;L^\infty(\Omega;\R^9))}).
\end{equation}

\vspace{2mm}\noindent
{\bf Step 4:} {\it Term $T_5$.}
 \label{6.4}
Using the Taylor formula, we get
\[
H'(r_K^n)-H'(r_K^{n-1})=H''(r_K^{n})(r_K^n-r_K^{n-1}) -\frac 12H'''(\overline r_K^n)(r_K^n-r_K^{n-1})^2,
\]
where $\overline r_K^n\in[\min(r_K^{n-1},r_K^n), \max(r_K^{n-1},r_K^n)]$;
we infer
\begin{equation*}
\begin{aligned}
	& T_5= T_{5,1}+ R_{5,1},\mbox{ with }  T_{5,1}=\deltat \sum_{n=1}^m\sum_{K\in {\cal T}}|K| ( r^n_K-\vr^n_K)\frac{p'(r_K^n)}{r_K^n} \frac{r_K^n-r_K^{n-1}}{\deltat}, \,  R_{5,1}=\deltat \sum_{n=1}^m\sum_{K\in {\cal T}} R_{5,1}^{n,K}, \mbox{ and } \\
	& R_{5,1}^{n,K}= \frac 12|K|H'''(\overline r_K^n)\frac{(r_K^n-r_K^{n-1})^2}{\deltat}(\vr_K^n-r_K^n).
 \end{aligned}
\end{equation*}
Consequently, by the { first order Taylor formula applied to function $t\mapsto r(t,x)$ on the interval $(t_{n-1}, t_n)$} and thanks to the mass conservation \eqref{masscons}
\begin{equation}\label{R5.1}
	|R_{5,1}|\le \deltat \;c(M_0,\underline r,\overline r, |p'|_{C^1([\underline r,\overline r]},\|\partial_t r\|_{L^\infty(Q_T)}).
\end{equation}

\vspace{2mm}
Let us now decompose  $T_{5,1}$ as follows:
\begin{equation}\label{T5}
\begin{aligned}
	& T_{5,1}=T_{5,2}+ R_{5,2}, \mbox{ with }T_{5,2}=\deltat \sum_{n=1}^m\sum_{K\in {\cal T}}{ \int_K} ( r^n_K-\vr^n_K)\frac{p'(r_K^n)}{r_K^n} [\partial_t r]^n { {\rm d}x}, \,  R_{5,2}=\deltat \sum_{n=1}^m\sum_{K\in {\cal T}} R_{5,2}^{n,K}, \mbox{ and} \\
	& R_{5,2}^{n,K}= {\int_K} ( r^n_K-\vr^n_K)\frac{p'(r_K^n)}{r_K^n}\Big(\frac{r_K^n-r_K^{n-1}}{\deltat} -[\partial_t r]^n\Big){ {\rm d} x}. \end{aligned}
\end{equation}
In accordance with (\ref{n2}), here and in the sequel, $[\partial_t r]^n(x)=\partial_t r(t_n,x)$.
{ We write using twice the Taylor formula in the integral form and the Fubini theorem,
$$
|R_{5,2}^{n,K}|= \frac 1\deltat\Big|{p'(r^n_K)}{r^n_K} (\vr^n_K-r^n_K)\int_K\int^{t_n}_{t_{n-1}}\int_s^{t_n}\partial_t ^2 r(z){\rm d}z{\rm d}s {\rm d}x\Big|
$$
$$
\le \frac {p'(r^n_K)}{r^n_K}\int^{t_n}_{t_{n-1}}\int_K|\vr^n_K-r^n_K|\Big|\partial_t ^2 r(z)\Big|{\rm d}x{\rm d}z{\rm d}s
$$
$$
\le \frac{p'(r^n_K)}{r^n_K}
\|\vr^n-\hat r^n\|_{L^{\gamma}(K)}\int^{t_n}_{t_{n-1}}\|\partial_t^2 r(z)\|_{L^{\gamma'}(K)}{\rm d} z{\rm d }s.
$$

Therefore, by virtue of Corollary \ref{Corollary1}, we have estimate
\begin{equation}\label{R5.2}
	|R_{5,2}|\le \deltat\; c(M_0, E_0,\underline r,\overline r, |p'|_{C^1([\underline r,\overline r]},\|\partial^2_t r\|_{L^1(0,T; L^{\gamma'}(\Omega)}).
\end{equation}
}



\vspace{2mm}\noindent
{\bf Step 5:} {\it Term $T_6$.}
%
We decompose this term as follows:
$$
 \begin{aligned}
	  &T_6=T_{6,1} + R_{6,1},\quad  R_{6,1}=\deltat \sum_{n=1}^m \sum_{K\in {\cal T}}\sum_{\sigma\in {\cal E}(K)}R_{6,1}^{n,\sigma,K}, \mbox{ with} \\
	&T_{6,1}=\deltat\sum_{n=1}^m\sum_{K\in {\cal T}}\sum_{\sigma=K|L\in {\cal E}(K)}|\sigma|\vr_K^{n}\Big( H'( r_K^{ {n-1}})-H'(r_\sigma^{ {n-1}})\Big)\bu_\sigma^n\cdot\bn_{\sigma,K}, \mbox{ and} \\
	&  R_{6,1}^{n,\sigma,K}=|\sigma|\Big(\vr_\sigma^{n,{\rm up}}-\vr_K^{n}\Big)\Big( H'( r_K^{ {n-1}})-H'(r_\sigma^{ {n-1}})\Big)\bu_\sigma^n\cdot\bn_{\sigma,K}, { \mbox{ for } \sigma=K|L\in {\cal E}_{\rm int}.}
 \end{aligned}
$$
We will now estimate the term $R_{6,1} ^{n,\sigma,K}$. We shall treat separately the cases $\gamma<2$ and $\gamma\ge 2$. The `simple' case $\gamma\ge 2$ is left to the reader. The more complicated case $\gamma<2$ will be treated as follows: We first write
$$
	 |R_{6,1} ^{n,\sigma,K}| \le \sqrt h\, \|\nabla H'(r)\|_{L^\infty(Q_T;\Rm^3)} |\sigma| |\vr_\sigma^{n,{\rm up}}-\vr_K^{n}|\Big[\frac {1_{\{\overline\vr^n_\sigma\ge 1\}}}{[{\rm max}\{\vr_K,\vr_L\}]^{(2-\gamma)/2}}+1_{\{\overline\vr^n_\sigma< 1\}}\Big]\,\sqrt{|\bu_\sigma^n\cdot\bn_{\sigma,K}|}\times
$$
$$
\Big[{1_{\{\overline\vr^n_\sigma\ge 1\}}}{[{\rm max}\{\vr_K,\vr_L\}]^{(2-\gamma)/2}}+1_{\{\overline\vr^n_\sigma< 1\}}\Big]
\sqrt h \sqrt{|\bu_\sigma^n\cdot\bn_{\sigma,K}|} ,
$$
where we have  employed the  first order Taylor formula applied to function $x\mapsto H'(r(t_{n-1},x)$.
Consequently, the application of the discrete H\"older and Young inequalities yield
$$
 \begin{aligned}
 |R_{6,1}|
	& \le \sqrt h\,  { c} \|\nabla H'(r)\|_{L^\infty(Q_T;\Rm^3)}\times\\
		&\deltat \sum_{n=1}^m
 \Big(\sum_{K\in {\cal T}}\sum_{\sigma\in {\cal E}(K)}|\sigma| h
 \Big[{1_{\{\overline\vr^n_\sigma\ge 1\}}}{[{\rm max}\{\vr_K,\vr_L\}]^{2-\gamma}}+1_{\{\overline\vr^n_\sigma< 1\}}\Big]
 \,|\bu_\sigma^n\cdot\bn_{\sigma,K}|\Big)^{1/2}\times\\
&\Big(\sum_{K\in {\cal T}}\sum_{\sigma=K|L\in {\cal E}(K)}|\sigma|h (\vr_\sigma^{n,{\rm up}}-\vr_K^{n})^2\Big[\frac {1_{\{\overline\vr^n_\sigma\ge 1\}}}{[{\rm max}\{\vr_K,\vr_L\}]^{2-\gamma}}+1_{\{\overline\vr^n_\sigma< 1\}}\Big] \,|\bu_\sigma^n\cdot\bn_{\sigma,K}| \Big)^{1/2}\\
& \le \sqrt h\,  { c} \|\nabla H'(r)\|_{L^\infty(Q_T;\Rm^3)}\times\\
		&\deltat \sum_{n=1}^m\Big\{ \Big[|\Omega_h|^\frac 56+
 \Big(\sum_{K\in {\cal T}}|\sigma| h(\vr^n_K)^{\frac 65(2-\gamma)}\Big)^{\frac 56}\Big]\;\Big(
 \,\sum_{K\in {\cal T}}\sum_{\sigma\in {\cal E}(K)}|\sigma|h|\bu_\sigma^n\cdot\bn_{\sigma,K}|^6 \Big)^{\frac 16}
 \\
&\sum_{K\in {\cal T}}\sum_{\sigma=K|L\in {\cal E}(K)}|\sigma|h (\vr_\sigma^{n,{\rm up}}-\vr_K^{n})^2\Big[\frac {1_{\{\overline\vr^n_\sigma\ge 1\}}}{[{\rm max}\{\vr_K,\vr_L\}]^{2-\gamma}}+1_{\{\overline\vr^n_\sigma< 1\}}\Big]|\bu_\sigma^n\cdot\bn_{\sigma,K}\Big\}^{1/2}
\\
& \le \sqrt h\, { c}   \|\nabla H'(r)\|_{L^\infty(Q_T;\Rm^3)} \Big\{
\;\deltat\sum_{n=1}^m\Big[|\Omega_h|^\frac 56+
 \Big(\sum_{K\in {\cal T}}|\sigma| h(\vr^n_K)^{\frac 65(2-\gamma)}\Big)^{\frac 56}\Big]
\Big(\sum_{\sigma\in {\cal E}}|\sigma| h|\bu_\sigma^n|^6\Big)^{1/6}
 \\
 & +\deltat\sum_{n=1}^m\Big[\sum_{K\in {\cal T}}\sum_{\sigma=K|L\in {\cal E}(K)}|\sigma|h (\vr_\sigma^{n,{\rm up}}-\vr_K^{n})^2
 \Big[\frac {1_{\{\overline\vr^n_\sigma\ge 1\}}}{[{\rm max}\{\vr_K,\vr_L\}]^{2-\gamma}}+1_{\{\overline\vr^n_\sigma< 1\}}\Big]
 \,|\bu^n_\sigma\cdot\bn_{\sigma,K}|  \Big]\Big\}
	\\  & \le \sqrt h \; c(M_0,E_0,\underline r,\overline r, |p'|_{C([\underline r,\overline r])}, \|\nabla r\|_{L^\infty(Q_T;\Rm^3)}),
\end{aligned}
$$
 where, in order to get the last line, we have used the estimate (\ref{est6}) of the numerical dissipation  to evaluate the second term, and finally
equivalence of norms (\ref{norms1})$_{p=6}$ together with (\ref{est1}) and (\ref{est3}), under assumption $\gamma\ge 12/11$, to evaluate the first term.

\vspace{2mm}
Let us now decompose the term $T_{6,1}$ as
{
\begin{equation*}
\begin{aligned}
	&T_{6,1}=T_{6,2}+ R_{6,2},  \mbox{ with }  T_{6,2}=\deltat\sum_{n=1}^m\sum_{K\in{\cal T}}\sum_{\sigma=K|L\in {\cal E}(K)}
|\sigma|\vr^n_K H''(r_K^{ n-1})(r_K^{ n-1}-r_\sigma^{ n-1}) [\bu^n_\sigma\cdot\bn_{\sigma,K}],
 \\    &  R_{6,2}=\deltat\sum_{n=1}^m\sum_{K\in{\cal K}}\sum_{\sigma\in {\cal E}(K) }
R_{6,2}^{n,\sigma,K},\;
\mbox{and }\\ & R_{6,2}^{n,\sigma,K}=|\sigma|\vr^n_K \Big(H'(r^{n-1}_K)-H'(r^{n-1}_\sigma)-H''(r^{n-1}_K)(r^{n-1}_K-r^{n-1}_\sigma)\Big)
[\bu^n_\sigma\cdot\bn_{\sigma,K}].
 \end{aligned}
\end{equation*}
}
Therefore, by virtue of the { second order Taylor formula applied to function $H'$}, the H\"older inequality,  (\ref{norms1}),
and (\ref{est1}), (\ref{est3}) in Corollary \ref{Corollary1}, we have, provided
$\gamma\ge 6/5$,
{
\begin{align}
| R_{6,2}| &\le h c \Big(|H''|_{C([\underline r,\overline r])}+|H'''|_{C([\underline r,\overline r])}\Big)\|\nabla r\|_{L^\infty(Q_T;\Rm^3)}\,\Delta t \sum_{n=1}^m\|\vr^n\|_{L^\gamma(\Omega_h)} \|\bu^n\|_{L^6(\Omega_h;\Rm^3)}
\nonumber \\
\label{R6.2}
& \le  h\;
c(M_0,E_0,\underline r, \overline r, |p'|_{C^1([\underline r,\overline r])},  \|\nabla r\|_{L^\infty(Q_T;\Rm^3)} ).
\end{align}
}

\vspace{2mm}

Let us now deal with the term  $T_{6,2}$.
Noting that $ \displaystyle
\int_K\nabla r^{ n-1} \dx =   \sum_{\sigma\in {\cal E}(K)} |\sigma|(r_\sigma^{ n-1}-r_K^{ n-1})\bn_{\sigma,K},$ we may write
$T_{6,2}= T_{6,3}+ R_{6,3},$ with
$$
\begin{aligned}
& T_{6,3}=
 -\deltat\sum_{n=1}^m\sum_{K \in {\cal T}}\int_K{\vr^n_K} H''(r_K^{ n-1}) \bu^n \cdot\nabla r^{ n-1}\dx, \\
&  R_{6,3} = \deltat\sum_{n=1}^m\sum_{K \in {\cal T}} \int_K\vr^n_K H''(r_K^{ n-1})(\bu^n- \bu^n_K)\cdot\nabla r^{ n-1}\dx
 \\ & \hspace{3cm}+\deltat\sum_{n=1}^m\sum_{K \in {\cal T}}\sum_{\sigma\in {\cal E}(K)} |\sigma|\vr^n_K H''(r_K^{ n-1})(r_K^{ n-1}-r_\sigma^{ n-1})
(\bu^n_\sigma-\bu^n_K)\cdot\bn_{\sigma,K}.
\end{aligned}
$$
Consequently, by virtue of H\"older's inequality, interpolation inequality (\ref{interpol1}) (to estimate
$\|\vc u^n-\vc u^n_K\|_{L^{\gamma_0'}(K;\Rm^3)}$ by $h^{(5\gamma_0-6)/(2\gamma_0)}\|\Grad\vu^n \|_{L^2(K;\Rm^9)}$, $\gamma_0=\min\{\gamma,2\}$)
in the first term, and by the Taylor formula applied to function $x\mapsto r(t_{n-1},x)$, then H\"older's inequality
 and (\ref{interpol1}--\ref{interpol2}) (to estimate $\|\vc u_\sigma^n-\vc u^n_K\|_{L^{\gamma_0'}(K;\Rm^3)}$ by $h^{(5\gamma_0-6)/(2\gamma_0)}\|\Grad\vu^n \|_{L^2(K;\Rm^9)}$), we get
\begin{equation}\label{R6.3}
|R_{6,3}|\le h^b \;c(M_0,E_0,\underline r, \overline r, |p'|_{C^1([\underline r,\overline r])} \|\nabla r\|_{L^\infty(Q_T;\Rm^3)} ),\;\; b=\frac{5\gamma_0-6}{2\gamma_0},
\end{equation}
provided $\gamma\ge 6/5$, where we have used at the end the discrete imbedding and  H\"older inequalities
(\ref{dod1*}--\ref{dod2*}) and finally estimates (\ref{est0}) and (\ref{est3}).

{
Finally we write
 $T_{6,3}= T_{6,4}+ R_{6,4},$ with
\begin{equation}\label{T6}
\begin{aligned}
& T_{6,4}=
 -\deltat\sum_{n=1}^m\sum_{K \in {\cal T}}\int_K{\vr^n_K}\frac{ p'(r_K^{n})}{r_K^n} \bu^n\cdot\nabla r^{ n}\dx, \\
&  R_{6,4} = \deltat\sum_{n=1}^m\sum_{K \in {\cal T}} \int_K\vr^n_K \Big(H''(r_K^{ n})\nabla r^{n}-
H''(r_K^{n-1})\nabla r^{n-1}\Big)\cdot\bu^n\dx,
\end{aligned}
\end{equation}
{ where by the same token as in (\ref{R5.2}),
\begin{equation}\label{R6.4}
|R_{6,4}|\le \deltat\; c (M_0,E_0, \underline r,\overline r, |p'|_{C^1([\underline r,\overline r])}, \|\nabla r, \partial_t r\|_{L^\infty(Q_T;\Rm^4)}, \|\partial_t\nabla r\|_{L^2(0,T; L^{{6\gamma}/{(5\gamma-6)}}(\Omega;\Rm^3))}),
\end{equation}
provided $\gamma\ge 6/5$.
}
}

\vspace{2mm}
We are now in position to conclude the proof of Lemma \ref{refrelenergy}: we obtain the inequality \eqref{relativeenergy-} by gathering the principal terms (\ref{T2}), (\ref{T3}), (\ref{T4}), (\ref{T5}), (\ref{T6}) and  the residual terms estimated in   (\ref{R2.1}), (\ref{R2.2}), (\ref{R3.1}), (\ref{R3.2}), (\ref{R3.3}), (\ref{R5.1}), (\ref{R5.2}),   (\ref{R6.2}), (\ref{R6.3}), { (\ref{R6.4}) at}  the right hand side $\sum_{i=1}^6 T_i$  of the discrete relative energy inequality (\ref{drelativeenergy}).

\qed

\section{ A discrete identity satisfied by  the strong solution}\label{7}
{ This section is devoted to the proof of a discrete identity satisfied by any strong solution of problem (\ref{i1}--\ref{i7}) in the class (\ref{r5}--\ref{r6}) extended eventually to $\R^3$ according to Lemma \ref{LEx1}. This identity
is stated in Lemma \ref{strongentropy} below. It will be used in combination with the approximate relative energy inequality stated in Lemma \ref{refrelenergy} to deduce the convenient form of the relative energy inequality verified by any function being a strong solution to the compressible Navier-Stokes system. This last step is performed in the next section.}

\begin{Lemma}[A discrete identity for strong solutions]\label{strongentropy}
Let  $(\vr^n,\bu^n)$ be a solution of the discrete problem (\ref{num1}--\ref{num3}) with the pressure satisfying (\ref{i4}), where $\gamma\ge 3/2$.
There exists
\[
c=c\Big(M_0,E_0,\underline r, \overline r, |p'|_{{C^1}[\underline r,\overline r]}, \|(\partial_t r, \nabla r, \bV,
\partial_t\bV, \nabla\bV, \nabla^2\bV)\|_{L^\infty(Q_T;\Rm^{45})},
\]
 \[
 \|\partial_t^2r\|_{L^1(0,T;L^{\gamma'}(\Omega))}, \|\partial_t\nabla r\|_{L^2(0,T;L^{6\gamma/{5\gamma-6}}(\Omega;\Rm^3))}, \|\partial_t^2\bV,\partial_t\nabla\bV\|_{L^2(0,T;L^{6/5}(\Omega;\Rm^{12}))}
\Big)>0,
\]
such that for all $m=1,\ldots,N$, we have:
\begin{equation}\label{relativeenergy-**}
 \sum_{i=1}^6 {\cal S}_i+ {  {\cal R}^m_{h,\Delta t}} =0,
\end{equation}
 where
$$
\begin{aligned}
&{\cal  S}_1= \Delta t \sum_{n=1}^m\sum_{K\in {\cal T}}\Big(\mu\int_{K}\Grad\bV_{h,0}^n:\Grad(\bV_{h,0}^n-\bu^n) \dx+\frac \mu 3\int_{K}{\rm div}\bV_{h,0}^n{\rm div}(\bV_{h,0}^n-\bu^n) \dx\Big),
	\\& {\cal S}_2=\Delta t\sum_{n=1}^m\sum_{K\in{\cal T}}|K|r_K^{n-1}\frac{{\bV}_{h,0,K}^{n}-{\bV}_{h,0,K}^{n-1}}{\Delta t}\cdot \Big({\bV}_{h,0,K}^{n}   -\bu_K^{n}\Big),
	\\ &{\cal S}_3={  \Delta t\sum_{n=1}^m\sum_{K\in{\cal T}}\sum_{\sigma\in {\cal E}(K)}|\sigma|r_\sigma^{n,{\rm up}}
\Big(\hat{\bV}^{n,{\rm up}}_{h,0,\sigma}-\hat{\bu}^{n,{\rm up}}_{\sigma}\Big)\cdot\Big(\bV^n_{h,0,\sigma}-{\bV}^n_{h,0,K}\Big) \hat\bV_{h,0,\sigma}^{n,{\rm up}}\cdot\bn_{\sigma,K}}
%
	\\& {\cal S}_4=-\Delta t \sum_{n=1}^m\int_{\Omega_h} p(\hat r^n)\dv\bV^n\dx,\\
&{\cal S}_5=0,
	\\ &{\cal  S}_6= -\Delta t\sum_{n=1}^m\int_{\Omega_h} p'(\hat r^n) { \bu^n}\cdot\nabla r^n\dx,	
\end{aligned}
$$
and
\[
|{\cal R}_{h, \deltat}^m|\le c\Big( h^{5/6} + \deltat\Big),
\]
for any couple $(r,\vc V)$ belonging to (\ref{XT}) and satisfying the continuity equation
(\ref{i1}) on $(0,T)\times\R^3$ and momentum equation (\ref{i2}) with boundary conditions (\ref{i6}) on $(0,T)\times\Omega$ in the classical sense.
\end{Lemma}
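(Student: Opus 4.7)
The plan is to derive the identity by testing the continuous PDEs satisfied by the strong solution $(r,\vc V)$ against suitable discrete functions built from the Crouzeix--Raviart and $Q_h$ spaces, and then identifying each resulting continuous integral with the corresponding $\mathcal{S}_i$ modulo consistency errors of size $h^{5/6}+\Delta t$. I would begin by invoking Lemma \ref{LEx1} so that $(r,\vc V)$ can be treated as a classical solution on all of $\R^3$, with $\vc V|_{\partial\Omega}=0$, compactly supported extension, and $\partial_t r+\Div(r\vc V)=0$ globally; the momentum equation still holds classically only on $(0,T)\times\Omega$, and the mismatch $\Omega\neq\Omega_h$ is controlled by Lemma \ref{LN1}, which gives $|(\Omega_h\setminus\Omega)\cup(\Omega\setminus\Omega_h)|\aleq h^2$ and therefore contributes only to the residual $\mathcal{R}^m_{h,\Delta t}$.

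The central step is to rewrite the momentum equation in its non-conservative form
\[
r\bigl(\partial_t\vc V+\vc V\cdot\Grad\vc V\bigr)+\Grad p(r)=\Div\tn{S}(\Grad\vc V),
\]
using the continuity equation, and then to test it by the piecewise-constant function $\sum_{K\in\T}(\vc V_{h,0,K}^n-\bu_K^n)\mathbf{1}_K$ on each time slab $(t_{n-1},t_n)$, integrate over $\Omega_h$, and sum on $n$. I would match the four continuous integrals to the six ${\cal S}_i$ as follows: (i) the viscous term, after integration by parts on each $K$ and replacement of $\Grad\vc V$ by $\Grad\vc V_{h,0}^n$, produces $\mathcal{S}_1$ with error $O(h)$ via Lemma \ref{Lemma1} and Corollary \ref{CN1}; (ii) the inertial term $\int_\Omega r\partial_t\vc V\cdot(\vc V_{h,0}^n-\bu^n)$ is converted to $\mathcal{S}_2$ by first-order Taylor expansion $\partial_t\vc V\simeq(\vc V^n-\vc V^{n-1})/\Delta t$ and the replacement $r\simeq r^{n-1}$, with the residual controlled by $\|\partial_t^2\vc V\|_{L^2(L^{6/5})}$ (giving the $\Delta t$ contribution); (iii) the pressure term, after integration by parts and using $\vc V_{h,0}^n\!\cdot\! \bn|_{\partial\Omega_h}=0$, splits into a divergence piece $-\int p(r)\,\Div\vc V$ matching $\mathcal{S}_4$ and a convection-of-density piece $\int p'(r)\Grad r\cdot\bu^n$ matching $\mathcal{S}_6$; (iv) ${\cal S}_5=0$ is taken as part of the statement and absorbs nothing nontrivial.

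The principal obstacle is the identification of the convective contribution $\int_{\Omega}r(\vc V\cdot\Grad)\vc V\cdot(\vc V_{h,0}^n-\bu^n)$ with the upwinded trilinear form ${\cal S}_3$. I would proceed elementwise, using the divergence theorem on each $K$ to rewrite it as a sum of face fluxes $|\sigma|\,r_\sigma\hat{\vc V}_{h,0,\sigma}\cdot\bn_{\sigma,K}\,(\vc V^n_{h,0,\sigma}-\vc V^n_{h,0,K})\cdot\hat{\vc V}^n_{h,0,\sigma}$, then introducing upwinding of $r$ with respect to $\bu^n$ (as in the numerical scheme) so that the central-minus-upwind discrepancy is a jump that is controlled by Lemma \ref{Lemma6}, the jump bound (\ref{help1}), and the $L^6$-Sobolev embedding in Lemma \ref{Lemma2+}. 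The subtraction of $\hat{\bu}^{n,\mathrm{up}}_\sigma$ from $\hat{\vc V}^{n,\mathrm{up}}_{h,0,\sigma}$ is produced from the very structure of the test function $\vc V_{h,0}^n-\bu^n$ combined with discrete integration by parts. The critical residual terms are bounded via the interpolation estimates from Lemma \ref{Lemma2} in the form $\|\bu^n-\bu^n_K\|_{L^{6/5}(K)}\aleq h^{5/6}\|\Grad\bu^n\|_{L^2(K)}$, paired with the uniform $L^6(\Omega_h)$-bound on $\bu^n$ from (\ref{est1}) and the $L^\infty$-bounds of $r$, $\Grad\vc V$, $\Grad r$ from the class $X_T(\R^3)$; this is what dictates the exponent $5/6$.

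Finally, I would assemble all five steps, using Corollaries \ref{CN1}--\ref{CN2} to absorb the boundary-layer differences between $\Pi_h^V$ and $\Pi_{h,0}^V$ (non-trivial only on the $O(h^2)$-measure skin of $\partial\Omega_h$), and employing the mass conservation (\ref{masscons}) and the energy bounds of Corollary \ref{Corollary1} to close the residual estimate uniformly in $n\leq N$. The aggregate of all consistency errors from (i)--(iv) is $O(h^{5/6}+\Delta t)$, yielding the claimed identity $\sum_{i=1}^{6}\mathcal{S}_i+\mathcal{R}^m_{h,\Delta t}=0$.
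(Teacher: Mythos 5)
Your overall architecture is the paper's: pass to the non\-/conservative form $r(\partial_t\bV+\bV\cdot\Grad\bV)+\Grad p(r)=\Div\tn{S}(\Grad\bV)$, test against the discrete error, integrate over $\Omega_h$ (absorbing the $O(h^2)$-measure mismatch $\Omega_h\setminus\Omega$ into the residual), and match the viscous, inertia, convective and pressure integrals to ${\cal S}_1,\dots,{\cal S}_6$ term by term, using the projection comparisons of Corollaries \ref{CN1}--\ref{CN2} near $\partial\Omega_h$. Two points, however, need correction. First, the test function must be the piecewise\-/linear Crouzeix--Raviart field $\bV^n_{h,0}-\bu^n$ itself, not the piecewise\-/constant field $\sum_K(\bV^n_{h,0,K}-\bu^n_K)1_K$ you propose: against a piecewise constant the viscous term integrates by parts into pure face fluxes and cannot produce the volume term ${\cal S}_1$ containing $\Grad(\bV^n_{h,0}-\bu^n)$; the passage to cell averages (needed for ${\cal S}_2$ and ${\cal S}_3$) is a separate, later step with its own residuals.

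Second, and more seriously, your identification of the source of the exponent $5/6$ is wrong. The inequality you invoke, $\|\bu^n-\bu^n_K\|_{L^{6/5}(K)}\aleq h^{5/6}\|\Grad\bu^n\|_{L^2(K)}$, is not what Lemma \ref{Lemma2} gives: for $q=6/5<p=2$ one gets, by H\"older and \eqref{L2-1}, the much better rate $h^{1+3(5/6-1/2)}=h^{2}$, and indeed all residuals generated by the convective term are $O(h)$ in the paper. The true bottleneck sits in the inertia term: to reach ${\cal S}_2$ one must replace $\frac{\bV^n-\bV^{n-1}}{\deltat}$ by the cell average of its zero\-/trace projection, $\frac{\bV^n_{h,0,K}-\bV^{n-1}_{h,0,K}}{\deltat}$, and the discrepancy $[\partial_t\bV]_h-[\partial_t\bV]_{h,0}$ is supported on the boundary layer of elements, of total measure $O(h)$, so by \eqref{N2-4} it is only $O(h^{5/6})$ in $L^{6/5}(\Omega_h)$; pairing with the $L^6$ bound on $\bV^n_{h,0}-\bu^n$ (Lemma \ref{Lemma2+} plus \eqref{est0}) yields the $h^{5/6}$ rate. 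One cannot do better here because $\partial_t\bV$ in the class \eqref{XT} is only $W^{1,6}$ in space, not Lipschitz, which forces the $L^{6/5}$--$L^6$ duality rather than an $L^\infty$ argument. Without this step your sketch does not account for the claimed bound $|{\cal R}^m_{h,\deltat}|\aleq h^{5/6}+\deltat$.
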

{ Before starting the proof  we recall an auxiliary algebraic inequality whose straightforward proof is left to the reader, and introduce some notations.}
\begin{Lemma}\label{LL1}
	Let $p$ satisfies assumptions(\ref{i4}).
	Let $0<a<b<\infty$. Then there exists $c=c(a,b)>0$ such that for all $\vr\in [0,\infty)$ and $r\in [a,b]$ there holds
	\[
		E(\vr|r)\ge c(a,b)\Big(1_{R_+\setminus [a/2,2 b]}{ (\vr)}+\vr^\gamma 1_{R_+\setminus [a/2,2 b]}{ (\vr)}+ (\vr-r)^2 1_{[a/2,2 b]}{ (\vr)}\Big),
	\]
	where $E(\vr|r)$ is defined in (\ref{est4}).
\end{Lemma}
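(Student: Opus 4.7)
The plan is to exploit the standard fact that the functional $E(\vr|r) = H(\vr) - H'(r)(\vr - r) - H(r)$ is the Bregman divergence associated with the strictly convex function $H$, whose second derivative is $H''(\vr) = p'(\vr)/\vr > 0$ on $(0,\infty)$ by (\ref{i4}). Thus $E(\cdot|r) \ge 0$, with equality only at $\vr = r$. Since $r$ ranges over the compact interval $[a,b]$, I would split the analysis into three regimes for $\vr$: the "near" regime $\vr \in [a/2, 2b]$, the "low" regime $\vr \in [0, a/2)$, and the "high" regime $\vr > 2b$, and obtain in each case the appropriate pointwise lower bound uniformly in $r \in [a,b]$, then take the minimum of the three constants.

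For the near regime, I would apply the second order Taylor formula:
\[
E(\vr|r) = \tfrac{1}{2} H''(\xi) (\vr - r)^2
\]
for some $\xi$ between $\vr$ and $r$, both lying in the compact interval $[a/2, 2b]$ on which $H''$ is continuous and strictly positive. Taking $c_1 = \tfrac{1}{2} \min_{[a/2, 2b]} H'' > 0$ yields the bound $E(\vr|r) \ge c_1 (\vr - r)^2$, uniformly in $r \in [a,b]$. This step is essentially routine once one has assumption (\ref{i4}).

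For the low regime $\vr \in [0, a/2]$, the function $(\vr, r) \mapsto E(\vr|r)$ is continuous on the compact set $[0, a/2] \times [a,b]$ and strictly positive there (since $\vr \le a/2 < a \le r$ forces $\vr \ne r$). Hence it attains a positive minimum $c_2 = c_2(a,b) > 0$, giving $E(\vr|r) \ge c_2$, which dominates $c_2(1 + \vr^\gamma)/(1 + (a/2)^\gamma)$ on the interval. For the high regime $\vr > 2b$, I would use the asymptotic behaviour encoded in (\ref{i4}): since $p'(\vr)/\vr^{\gamma-1} \to p_\infty > 0$, integration gives $H(\vr)/\vr^\gamma \to p_\infty/(\gamma(\gamma-1))$ for $\gamma > 1$ (the case $\gamma = 1$ is handled separately, as $p'(0)>0$ forces superlinear growth of $H$), while $H'(r)(\vr - r) + H(r)$ grows only linearly in $\vr$ with coefficient bounded uniformly on $r \in [a,b]$. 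Consequently $E(\vr|r)/\vr^\gamma$ admits a uniform positive lower bound for all $\vr$ sufficiently large, and a compactness/continuity argument on the compact interval $[2b, M]$ (for any fixed threshold $M$) handles the transition.

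The only mildly delicate point is the high regime when $\gamma$ is close to $1$: one must verify that the subtracted affine term $H'(r)(\vr - r) + H(r)$ does not spoil the $\vr^\gamma$ growth of $E(\vr|r)$. Since $H'(r)$ and $H(r)$ are bounded on $r \in [a,b]$, the subtracted term is $O(\vr)$ with constants depending only on $a, b$, which is negligible compared to $\vr^\gamma$ for $\gamma > 1$ and large $\vr$; combining the three regimes gives the claimed inequality with $c(a,b) = \min(c_1, c_2, c_3)$.
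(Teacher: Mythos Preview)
Your proof is correct and is precisely the standard three-regime argument (Taylor remainder on the essential set, compactness on the low set, asymptotic growth of $H$ on the high set) that the paper implicitly has in mind: the paper does not prove this lemma at all, stating only that its ``straightforward proof is left to the reader.'' One minor slip: in your parenthetical remark on the case $\gamma=1$, the superlinear growth of $H$ at infinity comes from $p'(\vr)\to p_\infty>0$ as $\vr\to\infty$, not from $p'(0)>0$; this does not affect the argument.
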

{ If we take in Lemma \ref{LL1} $\vr=\vr^n(x)$, $r=\hat r^n(x)$, $a=\underline r$, $b=\overline r$ (where r is a function belonging to class (\ref{XT}) and $\underline r$, $\overline r$
are its lower and upper bounds, respectively), we obtain
\begin{equation}\label{added}
E(\vr^n(x)|\hat r^n(x))\ge c(\underline r,\overline r)\Big(1_{R_+\setminus [\underline r/2,2 \overline r]}{ (\vr^n(x))}+(\vr^n)^\gamma(x) 1_{R_+\setminus [\underline r/2,2 \overline r]}{ (\vr^n(x))}+ (\vr^n(x)-\hat r^n(x))^2 1_{[\underline r/2,2 \overline r]}{ (\vr^n(x))}\Big).
\end{equation}
}
 Now,  for fixed numbers $\underline r$ and $\overline r$  { and  fixed functions $\vr^n$, $n=0,\ldots, N$,  we introduce the residual and essential subsets  of $\Omega$ (relative to $\vr^n$) as follows:}
\begin{equation}\label{essres}
N_{\rm ess}^n=\{x\in\Omega\,\Big|\,\frac 12\underline r\le \vr^n(x)\le 2\overline r\},\;
N_{\rm res}^n= \Omega\setminus N_{\rm ess}^n,
\end{equation}
and we set
\[
[g]_{\rm ess}{ (x)}= g{ (x)} 1_{N^n_{\rm ess}}{ (x)},\; [g]_{\rm res}{ (x)}= g { (x)}1_{N^n_{\rm res}}{ (x)},\;\; { x\in \Omega},\;\;g\in L^1(\Omega).
\]

 Integrating inequality (\ref{added}) we deduce
\begin{equation}\label{rentropy}
c(\underline r,\overline r)\sum_{K\in{\cal T}} \int_K{\Big(\Big[1\Big]_{\rm res}+\Big[(\vr^n)^\gamma\Big]_{\rm res}+\Big[\vr^n-\hat r^n\Big]^2_{\rm ess}\Big)}\dx\le{\cal E}(\vr^n,\bu^n\Big| \hat r^n,\bV^n),
\end{equation}
for any  pair $(r,\bV)$ belonging to the class (\ref{XT}) and any $\vr^n\in Q_h(\Omega_h)$, $\vr^n\ge 0$.

{ We are now ready to proceed to the proof of Lemma \ref{strongentropy}.}
\\ \\
\bProof
Since $(r,\bV)$ satisfies  \eqref{i1} on $(0,T)\times \Omega$ and belongs to the class (\ref{XT}), Equation (\ref{i2}) can be rewritten in the form
$$
r\partial_t\bV+r\bV\cdot\nabla\bV +\nabla p(r)-\mu\Delta\bV -\mu/3\nabla\dv\bV=0 \;\;\mbox{in $(0,T)\times\Omega$}.
$$
From this fact, we deduce the identity
\begin{equation}\label{strong0}
\sum_{i=1}^5{\cal T}_i={\cal R}_0,
\end{equation}
where
$$
\begin{aligned}
&  {\cal R}_0=\deltat\sum_{n=1}^m\int_{\Omega_h\setminus\Omega}\Big(
r^n[\partial_t\bV]^n+r\bV^n\cdot\nabla\bV^n +\nabla p(r^n)-\mu\Delta\bV^n -\frac\mu 3\nabla\dv\bV^n\Big)\cdot(\bV_{h,0}^n-\vu^n){\rm d}x,\\
&{\cal T}_1 = -\deltat\sum_{n=1}^m\int_{\Omega_h}\Big(\mu\Delta \bV^n+ \frac \mu 3
\nabla\dv\bV^n\Big)\cdot(\bV_{h,0}^n-\bu^n)\dx, \quad
{\cal T}_2 =\deltat\sum_{n=1}^m\int_{\Omega_h} r^{n}[\partial_t\bV]^n\cdot (\bV_{h,0}^n-\bu^n)\dx,\\
& {\cal T}_3 = \deltat \sum_{n=1}^m\int_{\Omega_h} r^n\bV^n\cdot\nabla\bV^n\cdot (\bV_{h,0}^n-\bu^n)\dx, \quad
{\cal T}_4 =\deltat\sum_{n=1}^m\int_{\Omega_h}\nabla p(r^n)\cdot\bV^n_{h,0}\dx,\\
& {\cal T}_5=0,
\quad\quad
{\cal T}_6 = -\deltat\sum_{n=1}^m\int_{\Omega_h}\nabla p(r^n)\cdot\bu^n \dx.
 \end{aligned}
 $$

 In the steps below, we deal with each of the terms ${\cal R}_0$ and ${\cal T}_i$.
 \\ \\\noindent
 \textbf{Step 0: }\textit{Term ${\cal R}_0$.}
 By the H\"older inequality
 $$
 |{\cal R}_0|\le |\Omega_h\setminus\Omega|^{5/6}\, c(\overline r, |p'|_{C[\underline r,\overline r]},\|(\partial_t r, \nabla r,\vc V, \nabla\vc V,
 \nabla^2\vc V)\|_{L^\infty(Q_T;\Rm^{43})}\deltat\sum_{n=1}^m(\|\bu^n\|_{L^6(\Omega_h)}+\|\bV^n_{h,0}\|_{L^6(\Omega_h)})
 $$
 \begin{equation}\label{cR0}
 \le h^{5/3}\, c(M_0,E_0,\overline r, |p'|_{C[\underline r,\overline r]}, \|(\partial_t r, \nabla r,\vc V, \nabla\vc V,
 \nabla^2\vc V)\|_{L^\infty(Q_T;\Rm^{43})},
 \end{equation}
 where we have used (\ref{est1}) and (\ref{N2-2}--\ref{N2-3}), (\ref{ddd}).

\vspace{2mm}
\textbf{Step 1: }\textit{Term ${\cal T}_1$.}\label{7.3}
 Integrating by parts, we get:
 {
\begin{equation}\label{cT1}
 \begin{aligned}
&{\cal T}_1={\cal T}_{1,1} + {\cal R}_{1,1},
\\
& \mbox{with }{\cal T}_{1,1} = \deltat\sum_{n=1}^m\sum_{K\in {\cal T}}\int_K\Big(\mu\nabla\bV_{h,0}^n:\nabla(\bV_{h,0}^n-\bu^n)+
\frac \mu 3\dv \bV_{h,0}^n\dv (\bV_{h,0}^n-\bu^n)\Big)\dx,
 \\ &\mbox{and }\;
 {\cal R}_{1,1}=I_1+I_2,\;\mbox{with}\\ & I_1=
 \deltat\sum_{n=1}^m\sum_{K\in {\cal T}}\int_K\Big(\mu\nabla(\bV^n-\bV_{h,0}^n):\nabla(\bV_{h,0}^n-\bu^n)
 + \frac \mu 3\dv(\bV^n-\bV_{h,0}^n)\dv(\bV_{h,0}^n-\bu^n)\Big)
 \dx,
 \\
 &
 I_2=-\deltat\sum_{n=1}^m \sum_{K\in{\cal T}}\sum_{\sigma\in {\cal E}(K)} \int_{\sigma} \Big(\mu\bn_{\sigma,K} \cdot\nabla\bV^n \cdot(\bV_{h,0}^n-\bu^n) + \frac\mu 3\dv\bV^n(\bV^n_{h,0}-\bu^n)\cdot\bn_{\sigma,K}\Big)\dS
 \\
 & =
 -\deltat\sum_{n=1}^m\ \sum_{\sigma\in {\cal E}} \int_{\sigma} \Big(\mu\bn_{\sigma} \cdot\nabla\bV^n \cdot\Big[ \bV_{h,0}^n-\bu^n\Big]_{\sigma,\vc n_\sigma} + \frac \mu 3\dv\bV^n\Big[\bV^n_{h,0}-\bu^n\Big]_{\sigma,\vc n_\sigma}\cdot\bn_{\sigma}\Big)\dS,
\end{aligned}
\end{equation}
}
where in the last line $\vc n_\sigma$ is the unit  normal to the face $\sigma$ and $[\cdot]_{\sigma,\vc n_\sigma}$ is the jump over sigma (with respect to $\vc n_\sigma$) defined in Lemma \ref{Lemma6}.

To estimate $I_1$, we use the Cauchy-Schwartz inequality, decompose $\vc V^n-\vc V^n_{h,0}=
\vc V^n-\vc V^n_{h}+ \vc V^n_h-\vc V^n_{h,0}$
and employ estimates (\ref{L1-3})$_{s=2}$, (\ref{N2-2}--\ref{N2-3}) to evaluate
 the norms involving $\nabla(\vc V^n-\vc V^n_{h,0})$, and  decompose $\vc V^n_{h,0}=\vc V^n_{h,0}-\vc V_h^n +\vc V_h^n$
use (\ref{N2-2}--\ref{N2-3}), (\ref{L1-2})$_{s=1}$, (\ref{est0}), the Minkowski inequalityt to estimate the norms involving $\nabla(\vc V^n_{h,0}-\vc u^n)$.
We get
{
$$
|I_1|\le h\;c(M_0,E_0, \|\nabla\vc V,\nabla^2 \bV\|_{L^\infty(0,T; L^{\infty}(\Omega;\Rm^{36}))}).
$$
Since the integral over any face $\sigma\in {\cal E}_{\rm int}$ of the jump of a function from $V_{h,0}(\Omega_h)$ is zero, we may write
$$
I_2=\deltat\sum_{n=1}^m\ \sum_{\sigma\in {\cal E}_{\rm int}} \int_{\sigma} \Big(\mu\bn_{\sigma} \cdot\Big(\nabla\bV^n -(\nabla\bV^n)_\sigma\Big)\cdot\Big[\bu^n- \bV_{h,0}^n\Big]_{\sigma,\vc n_\sigma}
$$
$$
+ \frac \mu 3\Big(\dv\bV^n-(\dv\bV^n)_\sigma\Big)\Big[\bu^n-\bV^n_{h,0}\Big]_{\sigma,\vc n_\sigma}\cdot\bn_{\sigma}\Big)\dS;
$$
whence by using the { first order Taylor formula applied to functions $x\mapsto \nabla \vc V^n(x)$ to evaluate the differences $\nabla\bV^n -(\nabla\bV^n)_\sigma$, $\dv\bV^n-[\dv\bV^n]_\sigma$, }
   and H\"older's inequality,
$$
\begin{aligned}
&|I_2| \le \deltat\, h\; c\,  \|\nabla^2\bV\|_{L^\infty(Q_T;\Rm^{27})} \sum_{n=1}^m\sum_{\sigma\in {\cal E}_{\rm int}} \sqrt{|\sigma|}\sqrt h\;\Big(\frac 1{\sqrt h}\,\Big\|\Big[\bu^n-\bV_{h,0}^n\Big]_{\sigma,\vc n_\sigma}\Big\|_{L^2(\sigma;\Rm^3)}\Big)\\
&\le \deltat\, h\; c\,  \|\nabla^2\bV\|_{L^\infty(Q_T;\Rm^{27})} \sum_{n=1}^m\sum_{\sigma\in {\cal E}_{\rm int}}\Big(|\sigma|h+
\frac 1h\,\Big\|\Big[\bu^n-\bV_{h,0}^n\Big]_{\sigma,\vc n_\sigma}\Big\|_{L^2(\sigma;\Rm^3)}^2\Big).
\end{aligned}
$$
Therefore,
\begin{equation}\label{cR1.1}
|{\cal R}_{1,1}|\le  h\, c(M_0, E_0,\|\vc V, \nabla\bV, \nabla^2\bV\|_{L^\infty(Q_T,\Rm^{39})}),
\end{equation}
where we have employed Lemma \ref{Lemma6}, (\ref{est0}) and (\ref{N2-2}--\ref{N2-3}), (\ref{L1-2}).
}

\vspace{2mm}
\textbf{Step 2:}\textit{  Term  ${\cal T}_2$.}\label{7.4}
Let us now decompose the term  ${\cal T}_2$ as
\begin{align*}
	 & {\cal T}_2={\cal T}_{2,1}+ {\cal R}_{2,1},\\
	& \mbox{with } {\cal T}_{2,1}=\deltat\sum_{n=1}^m\sum_{K\in {\cal T}}\int_Kr^{n-1}\frac{\bV^n-\bV^{n-1}}{\deltat}\cdot (\bV^n_{h,0}-\bu^n)\dx,\quad {\cal R}_{2,1}=\deltat\sum_{n=1}^m\sum_{K\in {\cal T}}{\cal R}_{2,1}^{n,K}, \\
	& \mbox{and }{ {\cal R}_{2,1}^{n,K}=\int_K(r^n-r^{n-1})[\partial_t\vc V]^n\cdot(\bV^n_{h,0}-\bu^n)\dx }+ \int_Kr^{n-1}\Big([\partial_t \bV]^n-\frac{\bV^n-\bV^{n-1}}{\deltat}\Big) \cdot(\bV_{h,0}^n-\bu^n)\dx.
\end{align*}
The remainder ${\cal R}_{2,1}^{n,K}$ can be rewritten as follows
$$
{\cal R}_{2,1}^{n,K}=\int_K\Big[\int_{t_{n-1}}^{t_n}\partial_tr(t,\cdot){\rm d}t\Big][\partial_t\vc V]^n\cdot(\bV^n_{h,0}-\bu^n)\dx + \frac 1\deltat\int_Kr^{n-1}\Big[\int_{t_{n-1}}^{t_n}\int_s^{t_n}\partial^2_t \bV(z,\cdot){\rm d}z{\rm d}s\Big] \cdot(\bV_{h,0}^n-\bu^n)\dx;
$$
{ whence, by the H\"older inequality,
\[
|{\cal R}_{2,1}^{n,K}|\le \deltat\Big[ (\|r\|_{L^\infty(Q_T)}+\|\partial_t r\|_{L^\infty(Q_T)}) (\|\partial_t\bV\|_{L^\infty(Q_T;\Rm^3)}|K|^{5/6}(\|\bu^n\|_{L^6(K)}+ \|\bV^n_{h,0}\|_{L^6(K)})
 \]
 \[
 +\|\partial^2_t \bV^n\|_{L^{6/5}(\Omega;\Rm^3))} (\|\bu^n\|_{L^6(K)}+ \|\bV^n_{h,0}\|_{L^6(K)})\Big].
\]
Consequently, by the same token as in (\ref{R5.2}) or (\ref{R6.4}),
\begin{equation}\label{cR2.1}
|{\cal R}_{2,1}|\le \deltat\, c\Big(M_0, E_0,\overline r,\|(\partial_t r, \bV, \partial_t\bV, \nabla\bV  )\|_{L^\infty(Q_T;\Rm^{16})}, \|\partial^2_t \bV\|_{L^2(0,T; L^{6/5}(\Omega;\Rm^3))}\Big),
\end{equation}
 where we have used the discrete H\"older and Young inequalities,  the estimates (\ref{ddd}), (\ref{N2-2}--\ref{N2-3}) and  the energy bound (\ref{est0}) from Corollary \ref{Corollary1}.
 }

\vspace{2mm}
{\bf Step 2a:} {\it Term ${\cal T}_{2,1}$.} We decompose the term ${\cal T}_{2,1}$ as
\begin{align*}
&{\cal T}_{2,1}={\cal T}_{2,2}+ {\cal R}_{2,2}, \\
&\mbox{with } {\cal T}_{2,2}=\deltat\sum_{n=1}^m\sum_{K\in {\cal T}}\int_Kr_K^{n-1}\frac{\bV^n-\bV^{n-1}}{\deltat}\cdot (\bV^{n}_{h,0}-\bu^{n})\dx, \;
 {\cal R}_{2,2}=\deltat\sum_{n=1}^m\sum_{K\in {\cal T}}{\cal R}_{2,2}^{n,K}, \\
&\mbox{and }{\cal R}_{2,2}^{n,K}=\int_K(r^{n-1}-r_K^{n-1})\frac{\bV^n-\bV^{n-1}}{\deltat}\cdot(\bV_{h,0}^n-\bu^{n})\dx;
\end{align*}
therefore,
\[
|{\cal R}_{2,2}^{n}|= |\sum_{K\in {\cal T}}{\cal R}_{2,2}^{n,K}| \le h \, c\|\nabla r\|_{L^\infty(Q_T;\Rm^3)}\|\partial_t\bV\|_{L^\infty(Q_T;\Rm^{3})}\|\bu^n-\bV^n_{h,0}\|_{L^6(\Omega;\Rm^3)}.
\]
Consequently, by virtue of formula (\ref{est1}) for $\vc u^n$  and estimates (\ref{ddd}), (\ref{N2-2}--\ref{N2-3}),
\begin{equation}\label{cR2.2}
|{\cal R}_{2,2}|\le h \, c(M_0, E_0, \|(\nabla r, \bV,\partial_t\bV,\nabla\bV)\|_{L^\infty(Q_T;\Rm^{18})}).
\end{equation}

{
\vspace{2mm}
{\bf Step 2b:} {\it Term ${\cal T}_{2,2}$.} We decompose the term ${\cal T}_{2,2}$ as
\begin{align*}
&{\cal T}_{2,2}={\cal T}_{2,3}+ {\cal R}_{2,3}, \\
&\mbox{with } {\cal T}_{2,3}=\deltat\sum_{n=1}^m\sum_{K\in {\cal T}}\int_Kr_K^{n-1}\frac{\bV^n_{h,0,K}-\bV^{n-1}_{h,0,K}}{\deltat}\cdot (\bV^{n}_{h,0}-\bu^{n})\dx, \;
 {\cal R}_{2,3}=\deltat\sum_{n=1}^m\sum_{K\in {\cal T}}{\cal R}_{2,3}^{n,K}, \\
&\mbox{and }{\cal R}_{2,3}^{n,K}=\int_K r_K^{n-1}\Big(\frac{\bV^n-\bV^{n-1}}{\deltat}-\Big[\frac{\bV^n-\bV^{n-1}}{\deltat}\Big]_h\Big)
\cdot(\bV_{h,0}^n-\bu^{n})\dx\\ &
+\int_K r_K^{n-1}\Big(\Big[\frac{\bV^n-\bV^{n-1}}{\deltat}\Big]_h-\Big[\frac{\bV^n-\bV^{n-1}}{\deltat}\Big]_{h,K}\Big)
\cdot(\bV_{h,0}^n-\bu^{n})\dx\\ &
+
\int_K r_K^{n-1}\Big(\Big[\frac{\bV^n-\bV^{n-1}}{\deltat}\Big]_{h,K}-\Big[\frac{\bV^n-\bV^{n-1}}{\deltat}\Big]_{h,0,K}\Big)
\cdot(\bV_{h,0}^n-\bu_n)\dx { =I_1^K+I_2^K}+ I^K_3.
\end{align*}
{ We calculate carefully
$$
|I^K_3|= \frac 1 \deltat r_K^{n-1}\int_K\Big\{
\int_{t_{n-1}}^{t_n}\Big[[\partial_t\bV(z)]_h- [\partial_t\bV(z)]_{h,0}\Big]_K\cdot
(\bV_{h,0}^n-\bu^{n}){\rm d}z\Big\}\dx
$$
$$
\le
\frac 1 \deltat r_K^{n-1}\int_{t_{n-1}}^{t_n}\Big\|\Big[[\partial_t\bV(z)]_h- [\partial_t\bV(z)]_{h,0}\Big]_K\Big\|_{L^{6/5}(K;\Rm^3)}
\|\bV_{h,0}^n-\bu^{n}\|_{L^6(K;\Rm^3)}{\rm d}z.
$$
Summing over polyhedra $K\in {\cal T}$
we get simply by using the discrete Sobolev inequality
$$
\sum_{K\in {\cal T}}|I_3^K|\le \frac 1 \deltat r_K^{n-1} \int_{t_{n-1}}^{t_n}\Big\{
\Big(\sum_{K\in {\cal T}}\|\bV^n_{h,0}-\bu^n\|^6_{L^6(K;\Rm^3)}\Big)^{1/6}\Big(\sum_{K\in {\cal T}}\|[\partial_t\bV(z)]_h-[\partial_t\bV(z)]_{h,0}\Big\|^{6/5}_{L^{6/5}(K;\Rm^3)}\Big)^{5/6}
\Big\}
{\rm d}z
$$
$$
\le
\frac 1 \deltat r_K^{n-1} \int_{t_{n-1}}^{t_n}\|\bV^n_{h,0}-\bu^n\|_{L^6(\Omega_h;\Rm^3)}
\|[\partial_t\bV(z)]_h-[\partial_t\bV(z)]_{h,0}\Big\|_{L^{6/5}(\Omega_h;\Rm^3)}{\rm d} z
$$
$$
\le \frac {h^{5/6}} {\deltat} \int_{t_{n-1}}^{t_n}\|\bV^n_{h,0}-\bu^n\|_{L^6(\Omega_h;\Rm^3)}
\|\partial_t\bV(z)\|_{L^{\infty}(\Omega_h;\Rm^3)} {\rm d} z,
$$
where we have used estimate (\ref{N2-4}) to obtain the last line.

As far as the term $I_2^K$ is concerned, we write
$$
|I_2^K|=\frac 1 \deltat r_K^{n-1}\Big|\int_K \Big(\Big[\int_{t_{n-1}}^{t_n}\partial_t\bV(z){\rm d}z\Big]_h- \Big[\int_{t_{n-1}}^{t_n}\partial_t\bV(z){\rm d}z\Big]_{h,K}\Big)
\cdot(\bu^{n}-\bV_{h,0}^n)\dx\Big|
$$
$$
\le \frac h \deltat r_K^{n-1}\int_{t_{n-1}}^{t_n}\Big\|\Grad\Big[\partial_t\bV(z)\Big]_h\Big\|_{L^{6/5}(K;\Rm^3)}
\|\bu^n-\bV^n_{h,0}\|_{L^6(K;\Rm^3)},
$$
where we have used the Fubini theorem, H\"older's inequality and (\ref{L2-1}), (\ref{L1-3})$_{s=1}$.
Further, employing the Sobolev inequality on the Crouzeix-Raviart space $V_h(\Omega_h)$ (\ref{sob1}), the H\"older inequality and estimate (\ref{L1-3})$_{s=1}$, we get
$$
\sum_{K\in {\cal T}}|I_2^K|\le \frac h \deltat r_K^{n-1} \|\bu^n-\bV^n_{h,0}\|_{L^6(\Omega_h;\Rm^3)}\int_{t_{n-1}}^{t_n}\Big\|\Grad\partial_t\bV(z)\Big\|_{L^{6/5}(\Omega_h;\Rm^3)}
{\rm d}z.
$$

We reserve the similar treatment to the term $I_1^K$. Resuming these calculations and summing over $n$ from $1$ to $m$ we get by using Corollary  \ref{Corollary1} and estimates (\ref{N2-2}--\ref{N2-3}), (\ref{ddd}),
%
%
\begin{equation}\label{cR2.3}
|{\cal R}_{2,3}|\le h^{5/6} \, c(M_0, E_0, \|(r, \bV,\nabla\bV, \partial_t\bV)\|_{L^\infty(Q_T;\Rm^{16})},\|\partial_t\nabla\bV\|_{L^2(0,T;L^{6/5}(\Omega;\Rm^{9}))}).
\end{equation}
}
}
{ \bf Step 2c:} {\it Term ${\cal T}_{2,3}$.}
{ We rewrite this term in the form
\begin{equation}\label{cT2}\begin{aligned}
& {\cal T}_{2,3}={\cal T}_{2,4}+ {\cal R}_{2,4},\; {\cal R}_{2,4}=\deltat\sum_{n=1}^m\sum_{K\in {\cal T}}{\cal R}_{2,4}^{n,K}, \\
&\mbox{with } {\cal T}_{2,4}=\deltat\sum_{n=1}^m\sum_{K\in {\cal T}} \int_Kr_K^{n-1} \frac {\bV^n_{h,0,K}-\bV^{n-1}_{h,0,K}} {\deltat}\cdot (\bu^{n}_K-\bV^{n}_{h,0,K})\dx,
\\ &\mbox{and }{\cal R}_{2,4}^{n,K}=\int_Kr_K^{n-1}\frac{\bV_{h,0,K}^n-\bV_{h,0,K}^{n-1}}{\deltat}\cdot\Big((\bu^{n}-\bu^n_K) -(\bV_{h,0}^n-\bV_{h,0,K}^n)\Big)\dx.
\end{aligned}
\end{equation}
First, we estimate the $L^\infty$ norm of $\frac {\bV^n_{h,0,K}-\bV^{n-1}_{h,0,K}}\deltat$ as in (\ref{R2.1}).
Next, we decompose
$$
\bV^n_{h,0}-\bV^n_{h,0,K}=\bV^n_{h,0}-\bV^n_{h}+\bV^n_h-\bV^n_{h,K}+[\bV^n_{h}-\bV^n_{h,0}]_K,
$$
{and use (\ref{L2-1})$_{p=2}$ to estimate $\bu^n-\bu^n_K$, (\ref{L2-1})$_{p=\infty}$, (\ref{L1-3})$_{s=1}$ to estimate
$\bV^n_h-\bV^n_{h,K}$ and (\ref{N2-2}--\ref{N2-3}) to evaluate $\|[\bV^n_{h}-\bV^n_{h,0}]_K\|_{L^\infty(K;\Rm^3)}\le
\|\bV^n_{h}-\bV^n_{h,0}\|_{L^\infty(K;\Rm^3)}$. Thanks to the H\"older inequality and (\ref{est0}) we finally deduce
\begin{equation}\label{cR2.4}
|{\cal R}_{2,4}|\le h \; c(M_0,E_0,\overline r, \|(\bV, \partial_t\bV, \nabla\bV)\|_{L^\infty(Q_T;\Rm^{15})}).
\end{equation}
}
}

\vspace{2mm}
\textbf{Step 3:} \textit{ Term  ${\cal T}_3$.}
Let us first decompose ${\cal T}_3$ as
\begin{align*}
& {\cal T}_3={\cal T}_{3,1} + {\cal R}_{3,1}, \\
&\mbox{with } {\cal T}_{3,1}=\deltat\sum_{n=1}^m\sum_{K\in{\cal T}}\int_K r_K^n\bV_{h,0,K}^n\cdot\nabla\bV^n\cdot (\bV_{h,0,K}^n - \bu_K^n)\dx,
 \quad {\cal R}_{3,1}=\deltat\sum_{n=1}^m \sum_{K\in{\cal T}}{\cal R}_{3,1}^{n,K},
\\ &\mbox{and }{\cal R}_{3,1}^{n, K}=\int_K(r^n-r_K^n)\bV^n\cdot\nabla\bV^n\cdot(\bV^n_{h,0}-\bu^n)\dx
+ \int_K r_K^n(\bV^n-\bV^n_{h,0})\cdot\nabla\bV^n\cdot(\bV^n_{h,0}-\bu^n)\dx \\
& \phantom{\mbox{and }{\cal R}_{3,1}^{n, K}=} +\int_K r_K^n(\bV_{h,0}^n-\bV^n_{h,0,K})\cdot\nabla\bV^n\cdot(\bV^n_{h,0}-\bu^n)\dx \\
& \phantom{\mbox{and }{\cal R}_{3,1}^{n, K}=}
+\int_K r_K^n\bV^n_{h,0,K}\cdot\nabla\bV^n\cdot\Big(\bV^n_{h,0}-\bV^n_{h,0,K} - (\bu^n-\bu^n_K)\Big)\dx.
\end{align*}
We have
$$
\|r^n-r^n_K\|_{L^\infty(K)}\aleq h \|\nabla r^n\|_{L^\infty(K)},
$$
by the Taylor formula,
$$
\|\bV^n-\bV_{h,0}^n\|_{L^\infty(K;\Rm^3)}\aleq h \|\nabla \bV^n\|_{L^\infty(K;\Rm^9)},
$$
by virtue of (\ref{L1-2})$_{s=1}$ and (\ref{N2-2}--\ref{N2-3}),
$$
\|\bV_{h,0}^n-\bV_{h,0,K}^n\|_{L^\infty(K;\Rm^3)}\le \|\bV_{h,0}^n-\bV_{h,}^n\|_{L^\infty(K;\Rm^3)}+
\|\bV_{h}^n-\bV_{h,K}^n\|_{L^\infty(K;\Rm^3)}
$$
$$
+ \|[\bV_{h}^n-\bV_{h,0}^n]_K\|_{L^\infty(K;\Rm^3)}
\aleq h \|\nabla \bV^n\|_{L^\infty(K;\Rm^9)}
$$
by virtue of  (\ref{L2-1}), (\ref{L1-2})$_{s=1}$ (\ref{L1-3})$_{s=1}$ and (\ref{N2-2}--\ref{N2-3}),
$$
\|\bu^n-\bu^n_K\|_{L^\infty(K;\Rm^3)}\aleq h \|\nabla \bu^n\|_{L^\infty(K;\Rm^9)}.
$$
Consequently by employing several times the H\"older inequality (for integrals over $K$) and the discrete
H\"older inequality (for the sums over $K\in{\cal T}$), and using estimate (\ref{est0}), we arrive at

\begin{equation}\label{cR3.1}
|{\cal R}_{3,1}|\le h\; c(M_0, E_0,\overline r, \|(\nabla r, \bV, \nabla\bV)\|_{L^\infty(Q_T;\Rm^{15})}).
\end{equation}

\vspace{1mm}\noindent
Now we shall deal wit term ${\cal T}_{3,1}$. Integrating by parts, we get:

\begin{align*}
	\int_K r_K^n\bV_{h,0,K}^n\cdot\nabla\bV^n\cdot (\bV_{h,0,K}^n-\bu_K^n)\dx &=\sum_{\sigma\in {\cal E}(K)}{ |\sigma|} r_K^n  [\bV_{h,0,K}^n\cdot{\vc n}_{\sigma,K}]\bV_\sigma^n\cdot(\bV_{h,0,K}^n-\bu_K^n)
	\\
	&=\sum_{\sigma\in {\cal E}(K)}|\sigma| r_K^n [\bV_{h,0,K}^n\cdot{\vc n}_{\sigma,K}](\bV_\sigma^n-\bV^n_{h,K})\cdot(\bV_{h,K}^n-\bu_K^n),
\end{align*}
thanks to the the fact that $\sum_{\sigma\in {\cal E}(K)}\int_\sigma \bV_{h,K}^n\cdot{\vc n}_{\sigma,K}{\rm d} S=0$.

{ Next we write
\begin{align*}
& {\cal T}_{3,1}= {\cal T}_{3,2}+ {\cal R}_{3,2},  \quad {\cal R}_{3,2}=\deltat\sum_{n=1}^m {\cal R}_{3,2}^{n},
\end{align*}
\begin{align}
&
{\cal T}_{3,2}=\deltat\sum_{n=1}^m\sum_{K\in{\cal T}}\sum_{\sigma\in {\cal E}(K)}|\sigma| \hat r_\sigma^{n,{\rm up}}
[\hat \bV_{h,0,\sigma}^{n,{\rm up}}\cdot{\vc n}_{\sigma,K}](\bV_\sigma^n-\bV^n_{h,K})\cdot(\hat\bV_{h,0,\sigma}^{n,{\rm up}} - \hat\bu_\sigma^{n,{\rm up}}),
\end{align}
\begin{align*}
&  \mbox{and }{\cal R}_{3,2}^{n}=\sum_{K\in {\cal T}}\sum_{\sigma\in {\cal E}(K)}|\sigma|(r_K^n-\hat r^{n,{\rm up}}_\sigma) [\bV_{h,0,K}^n\cdot{\vc n}_{\sigma,K}](\bV_\sigma^n-\bV^n_{h,K})\cdot(\bV^n_{h,0,K}-\bu_K^n)
\\
&+
\sum_{K\in {\cal T}}\sum_{\sigma\in {\cal E}(K)}|\sigma|\hat r^{n,{\rm up}}_\sigma \Big[\Big(\bV_{h,0,K}^n-\hat \bV_{h,0,\sigma}^{n,{\rm up}}\Big)\cdot{\vc n}_{\sigma,K}\Big](\bV_\sigma^n-\bV^n_{h,K})\cdot(\bV^n_{h,K}-\bu_K^n)\\
&
+\sum_{K\in {\cal T}}\sum_{\sigma\in {\cal E}(K)}|\sigma|\hat r^{n,{\rm up}}_\sigma [\hat \bV_{h,0,\sigma}^{n,{\rm up}}\cdot{\vc n}_{\sigma,K}](\bV_\sigma^n-\bV^n_{h,K})
\cdot\Big((\bV^n_{h,0,K}-\hat\bV_{h,0,\sigma}^{n,{\rm up}})- (\bu_K^n -\hat\bu_{h,\sigma}^{n,{\rm up}}) \Big).
\end{align*}
We may write
$$
\vc V_\sigma^n-{\vc V}_{h,0,K}^n=\vc V_\sigma^n-{\vc V}^n+ \vc V^n-{\vc V}_{h}^n+  \vc V_h^n-{\vc V}_{h,K}^n  + [\vc V_h^n-{\vc V}^n_{h,0}]_K,$$
and use several times the { Taylor formula  along with (\ref{L1-2})$_{s=1}$, (\ref{L2-1}), (\ref{L1-3})$_{s=1}$,
(\ref{N2-2}--\ref{N2-3})(in order to estimate $r_K^n-\hat r_\sigma^{n,{\rm up}}$,
 $\vc V_\sigma^n-{\vc V}_{h,0,K}^n$, $\vc V_{h,K}^n-\hat{\vc V}_{h,\sigma}^{n,{\rm up}}$)}
 to get the bound
$$
|{\cal R}_{3,2}^n|\le h\, c \|r\|_{W^{1,\infty}(\Omega)}\Big(1+ \|\bV\|_{W^{1,\infty}(Q_T;\Rm^3)}\Big)^3
\sum_{K\in {\cal T}} h|\sigma||\bu^n_K|
$$
$$
+ c \|r\|_{W^{1,\infty}(\Omega)}\Big(1+ \|\bV\|_{W^{1,\infty}(Q_T;\Rm^3)}\Big)^2\sum_{K\in{\cal T}}
\sum_{\sigma\in {\cal E}(K)} h|\sigma| |\bu_K^n-\bu_\sigma^n|.
$$
We have by the H\"older inequality
$$
\sum_{K\in {\cal T}} h|\sigma||\bu^n_K|\le c\Big(\sum_{\sigma\in {\cal T}} h|\sigma||\bu^n_K|^6\Big)^{1/6}
\le c\Big[\Big(\sum_{K\in {\cal T}}\|\bu^n-\bu_K^n\|^6_{L^6(K;\Rm^3)}\Big)^{1/6}
$$
$$
+\Big(\sum_{K\in {\cal T}}\|\bu^n\|^6_{L^6(K;\Rm^3)}\Big)^{1/6}\Big]
\le c\Big(\sum_ {K\in {\cal T}}\|\nabla\bu_n\|^2_{L^2(K;\Rm^9)}\Big)^{1/2},
$$
$$
\sum_{K\in{\cal T}}
\sum_{\sigma\in {\cal E}(K)} h|\sigma| |\bu_K^n-\bu_\sigma^n|\le c\Big[\Big(\sum_{K\in {\cal T}}\|\bu^n-\bu_K^n\|^2_{L^2(K;\Rm^3)}\Big)^{1/2}
$$
$$
+ \Big(\sum_{K\in {\cal T}}\sum_{\sigma\in {\cal E}(K)}\|\bu^n-\bu_\sigma^n\|^2_{L^2(K;\Rm^3)}\Big)^{1/2}\Big]
\le h\,c\Big(\sum_{K\in {\cal T}} \|\nabla\bu_n\|^2_{L^2(K;\Rm^9)}\Big)^{1/2},
$$
where we have used (\ref{L2-3})$_{p=2}$, (\ref{L2-1}--\ref{L2-2})$_{p=2}$.
Consequently, we may use (\ref{est0}) to conclude
\begin{equation}\label{cR3.2}
|{\cal R}_{3,2}|\le h\, c\Big(M_0, E_0, \overline r, \|\nabla r, \bV, \nabla\bV\|_{L^{\infty}(Q_T;\Rm^{15})}\Big).
\end{equation}
}
Finally, we replace in ${\cal T}_{3,2}$ $\bV_\sigma^n-\bV^n_{h,K}$ by $\bV_{h,0,\sigma}^n-\bV^n_{h,0,K}$. We get
\begin{align*}
& {\cal T}_{3,2}= {\cal T}_{3,3}+ {\cal R}_{3,3},  \quad {\cal R}_{3,3}=\deltat\sum_{n=1}^m {\cal R}_{3,3}^{n},
\end{align*}
\begin{align}
&
{\cal T}_{3,3}=\deltat\sum_{n=1}^m\sum_{K\in{\cal T}}\sum_{\sigma\in {\cal E}(K)}|\sigma| \hat r_\sigma^{n,{\rm up}}
[\hat \bV_{h,0,\sigma}^{n,{\rm up}}\cdot{\vc n}_{\sigma,K}](\bV_{h,0,\sigma}^n-\bV^n_{h,0,K})\cdot(\hat\bV_{h,0,\sigma}^{n,{\rm up}}- \hat\bu_\sigma^{n,{\rm up}}), \label{cT3}
\end{align}
and
\begin{align*}
{\cal R}_{3,3}^{n}=\sum_{K\in {\cal T}}\sum_{\sigma\in {\cal E}(K)}|\sigma|(r_K^n-\hat r^{n,{\rm up}}_\sigma) \bV_{h,0,K}^n\cdot{\vc n}_{\sigma,K}\Big([\bV^n-\bV_{h,0}]_\sigma^n-[\bV^n_{h}-\bV^n_{h,0}]_K\Big)\cdot(\hat\bV_{h,0,\sigma}^{n,{\rm up}}- \hat\bu_\sigma^{n,{\rm up}}),
\end{align*}
committing error
\begin{equation}\label{cR3.3}
|{\cal R}_{3,3}^{n}|= \le h\, c\Big(M_0, E_0, \overline r, \|\nabla r, \bV, \nabla\bV\|_{L^{\infty}(Q_T;\Rm^{15})}\Big),
\end{equation}
as in the previous step.

\vspace{2mm}\noindent
\textbf{Step 4:}\textit{ Terms  ${\cal T}_4$ }
We write
$$
{\cal T}_4= {\cal T}_{4,1}+{\cal R}_{4,1},\;\;{\cal T}_{4,1}=-\int_{\Omega_h}\nabla p(r^n)\cdot\vc V^n{\rm d}x,
$$
$$
{\cal R}_{4,1}=\int_{\Omega_h}\nabla p(r^n)\cdot(\vc V^n-\bV^n_{h,0})\dx;
$$
whence
\bFormula{cR4.1}
|{\cal R}_{4,1}|\le h c(\overline r, |p'|_{C[\underline r,\overline r]}, \|\nabla r\|_{L^\infty(Q_T;\Rm^3)}),
\eF
by virtue of (\ref{L1-2})$_{s=1}$, (\ref{N2-2}--\ref{N2-3}).

Next, employing the integration by parts
$$
{\cal T}_{4,2}={\cal T}_{4,2}+{\cal R}_{4,2},\quad
{\cal T}_{4,2}=\int_{\Omega_h} p(r^n)\dv\vc V^n\dx,
$$
$$
{\cal R}_{4,2}=-\sum_{K\in {\cal T}}\sum_{\sigma\in {\cal E}(K), \sigma\in\partial\Omega_h}\int_\sigma
p(r^n)\vc V^n\cdot\vc n_{\sigma,K}{\rm d}S= -\sum_{K\in {\cal T}}\sum_{\sigma\in {\cal E}(K), \sigma\in\partial\Omega_h}\int_\sigma
p(r^n)\Big(\vc V^n-\bV^n_{h,0,\sigma}\Big)\cdot\vc n_{\sigma,K}{\rm d}S.
$$
Writing
$$
\vc V^n-\bV^n_{h,0,\sigma}=\vc V^n-\bV^n_{h}+\vc V_h^n-\bV^n_{h,\sigma}+[\bV_{h}^n-\bV^n_{h,0}]_\sigma,
$$
we deduce by using (\ref{L1-2})$_{s=1}$, (\ref{L1-3})$_{s=1}$, (\ref{L2-2})$_{p=\infty}$, (\ref{N2-2}),
(\ref{N2-3}),
$$
\|\vc V^n-\bV^n_{h,0,\sigma}\|_{L^\infty(K;\Rm^3)}\aleq h\|\nabla \bV^n\|_{L^\infty(K;\Rm^3)},\;\sigma\in K.
$$
Now, we employ the fact that
$$
\sum_{K\in {\cal T}}\sum_{\sigma\in {\cal E}(K), \sigma\in\partial\Omega_h}\int_\sigma{\rm d}S\approx 1;
$$
whence
\bFormula{cR4.2}
|{\cal R}_{4,2}|\le h c(\overline r, |p|_{C[\underline r,\overline r]}, \|\nabla \vc V\|_{L^\infty(Q_T;\Rm^9)})
\eF

Finally,
\bFormula{cT4}
{\cal T}_{4,2}={\cal T}_{4,3}+{\cal R}_{4,3},\quad
{\cal T}_{4,3}=\int_{\Omega_h} p(\hat r^n)\dv\vc V^n\dx,\; \quad {\cal R}_{4,3}=\int_{\Omega_h}( p( r^n)-p(\hat r^n))\dv\vc V^n\dx;
\eF
whence
\bFormula{cR4.3}
|{\cal R}_{4,3}|\le h c(|p'|_{C[\underline r,\overline r]}, \|(\nabla r,\nabla\vc V)\|_{L^\infty(Q_T;\Rm^{12})}).
\eF

\vspace{2mm}\noindent
\textbf{Step 5:} \textit{Term ${\cal T}_6$} \label{7.6}
We decompose ${\cal T}_6$ as

\begin{equation}\label{cT6}
\begin{aligned}
	& {\cal T}_6= {\cal T}_{6,1}+ {\cal R}_{6,1},
\mbox{with }{\cal T}_{6,1}=-\deltat \sum_{n=1}^m \sum_{K\in{\cal T}}\int_K  p'(\hat r^n)\bu^n\cdot\nabla r^n\dx,\\
&
{\cal R}_{6,1}= \deltat \sum_{n=1}^m \sum_{K\in{\cal T}}\int_K (p'(\hat r^n)-p'(r^n))\cdot\bu^n\cdot\nabla r^n\dx;
\end{aligned}
\end{equation}
Consequently, by the Taylor formula, H\"older inequality and estimate (\ref{est1}),

\begin{equation}\label{cR6.1}
	|{\cal R}_{6,1}|\le h\, c(M_0, E_0,\underline r,\overline r, |p'|_{C^1([\underline r,\overline r])}, \|\nabla r\|_{L^\infty(Q_T;\Rm^3)}).
\end{equation}

Gathering the formulae (\ref{cT1}), (\ref{cT2}), (\ref{cT3}), (\ref{cT4}), (\ref{cT6}) and estimates for the residual terms (\ref{cR1.1}), (\ref{cR2.1}--\ref{cR2.4}), (\ref{cR3.1}--\ref{cR3.3}), (\ref{cR4.1}), (\ref{cR4.2}), (\ref{cR4.3}), (\ref{cR6.1}) concludes the proof of Lemma \ref{strongentropy}.
\qed

\section{A Gronwall inequality}

{ In this Section we put together the relative energy inequality (\ref{relativeenergy-}) and the identity (\ref{relativeenergy-**}) derived in the previous section. The final inequality resulting from this manipulation is formulated in the following lemma.}
\begin{Lemma}\label{Gronwall}
Let  $(\vr^n,\bu^n)$ be a solution of the discrete problem (\ref{num1}--\ref{num3}) with the pressure satisfying (\ref{i4}), where $\gamma\ge 3/2$.
Then  there exists a positive number
\[
c=c\Big( M_0,E_0,\underline r, \overline r, |p'|_{{C^1}[\underline r,\overline r]}, \|(\partial_t r, \nabla r, \bV,
\partial_t\bV, \nabla\bV, \nabla^2\bV)\|_{L^\infty(Q_T;\Rm^{45})},
\]
\[
 \|\partial_t^2r\|_{L^1(0,T;L^{\gamma'}(\Omega))}, \|\partial_t\nabla r\|_{L^2(0,T;L^{6\gamma/{5\gamma-6}}(\Omega;\Rm^3))}, \|\partial_t^2\bV,\partial_t\nabla\bV\|_{L^2(0,T;L^{6/5}(\Omega;\Rm^{12}))}
\Big),
\]
such that for all $m=1,\ldots,N,$ there holds:
\[{\cal E}(\vr^m,\bu^m|\hat r^m,\hat\bV_{h,0}^m)
{ +\deltat\frac \mu 2\sum_{n=1}^m\sum_{{K}\in {\cal T}}\int_K|\Grad(u^n-\vc V^n_{h,0})|^2{\rm d x}}
\]
\[
\le c\Big[h^a+\sqrt{\deltat} + {\cal E}(\vr^0,\bu^0|\hat r(0),\hat\bV_{h,0}(0))\Big] + c\,\deltat\sum_{n=1}^m {\cal E}(\vr^n,\bu^n|\hat r^n,\hat\bV_{h,0}^n),
\]
with any couple $(r,\vc V)$ belonging to (\ref{XT}) and satisfying the continuity equation
(\ref{i1}) on $(0,T)\times\R^3$ and momentum equation (\ref{i2}) with boundary conditions (\ref{i6}) on $(0,T)\times\Omega$ in the classical sense,
where $a$ is defined in (\ref{A1}) and ${\cal E}$ is given in (\ref{est4}).
\end{Lemma}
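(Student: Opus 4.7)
\medskip

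\noindent
\textbf{Proof plan.}
The scheme is to couple the approximate relative energy inequality (\ref{relativeenergy-}) of Lemma \ref{refrelenergy} with the discrete identity (\ref{relativeenergy-**}) of Lemma \ref{strongentropy}. Since the right-hand side of (\ref{relativeenergy-**}) is zero (modulo the residual ${\cal R}^m_{h,\Delta t}$), I subtract it from (\ref{relativeenergy-}), keeping the viscous dissipation and relative-energy terms on the left. What remains on the right is the telescoping sum $\sum_{i=1}^{6}(S_i-{\cal S}_i)$ together with the already-controlled residuals $R^m_{h,\Delta t}$, $G^m$, and ${\cal R}^m_{h,\Delta t}$, all of which are, by construction, of order $h^a+\sqrt{\Delta t}$ or of the Gronwall form $c\,\Delta t\sum_{n\le m}{\cal E}(\vr^n,\hat\bu^n\,|\,\hat r^n,\hat{\bV}_{h,0}^n)$. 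So the entire argument reduces to showing that each difference $S_i-{\cal S}_i$ can itself be bounded by the same two kinds of quantities.

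The analysis then proceeds term by term. First, $S_1-{\cal S}_1\equiv 0$, so the viscous contribution on the left stays untouched; it is the one that produces the gradient-dissipation term $\tfrac{\mu}{2}\Delta t\sum_n\!\int|\nabla_h(\bu^n-\bV_{h,0}^n)|^2$ after a trivial Cauchy--Schwarz and Korn manipulation. Next, $S_2-{\cal S}_2$ features $(\vr_K^{n-1}-r_K^{n-1})$ multiplied by the discrete time derivative of $\bV_{h,0}$ and by $(\bV_{h,0,K}^n-\bu_K^n)$; by the $L^\infty$-bounds on $\partial_t\bV$ and Young's inequality this is absorbed into $c\,\Delta t\sum_{n\le m}{\cal E}(\vr^n,\hat\bu^n\,|\,\hat r^n,\hat{\bV}_{h,0}^n)$ after splitting $\vr-r$ into its essential and residual parts via (\ref{rentropy}) and Lemma \ref{LL1}. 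The difference $S_3-{\cal S}_3$ is handled analogously: the factor $(\vr_\sigma^{n,{\rm up}}-r_\sigma^{n,{\rm up}})$ times the $O(h)$ factor $(\bV_{h,0,\sigma}^n-\bV_{h,0,K}^n)$ times velocity differences is bounded, via discrete H\"older and the energy estimates of Corollary \ref{Corollary1}, by a term of Gronwall form plus an $h^{a}$ residual.

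The heart of the proof — and the main obstacle — is the combined pressure block $(S_4+S_5+S_6)-({\cal S}_4+{\cal S}_5+{\cal S}_6)$. Direct computation gives
\[
\Delta t\!\sum_{n=1}^m\!\int_{\Omega_h}\!\Big\{-\big[p(\vr^n)-p(\hat r^n)\big]\dv\bV^n+\frac{\hat r^n-\vr^n}{\hat r^n}\,p'(\hat r^n)\big(\partial_t r^n+\bu^n\cdot\nabla r^n\big)\Big\}\dx.
\]
Here I invoke the fact that the strong solution $(r,\bV)$ satisfies $\partial_t r+\bV\cdot\nabla r=-r\,\dv\bV$ pointwise; substituting and regrouping, the combination reshuffles into
\[
-\Delta t\sum_{n=1}^m\int_{\Omega_h}\Big\{p(\vr^n)-p'(\hat r^n)(\vr^n-\hat r^n)-p(\hat r^n)\Big\}\dv\bV^n\,\dx \;+\; \Delta t\sum_{n=1}^m\int_{\Omega_h}\frac{\hat r^n-\vr^n}{\hat r^n}\,p'(\hat r^n)\,(\bu^n-\bV^n)\cdot\nabla r^n\,\dx.
\]
The first integrand is exactly $(\gamma$-times$)\,E(\vr^n\,|\,\hat r^n)$ up to $C^1$ bounds on $p$, so by $\|\dv\bV\|_{L^\infty}$ it is bounded by $c\,\Delta t\sum_{n\le m}{\cal E}$. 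The second is split on $N_{\rm ess}^n\cup N_{\rm res}^n$: on the essential set Young's inequality against $\tfrac{\mu}{4}\Delta t\sum_n\|\nabla_h(\bu^n-\bV_{h,0}^n)\|^2$ (absorbed into the left-hand side) gives a Gronwall-type remainder; on the residual set (\ref{rentropy}) together with (\ref{est1}) yields the same conclusion, provided $\gamma\ge 3/2$. Minor replacements of $\bV^n$ by $\bV_{h,0}^n$ produce further $h$-residuals via (\ref{N2-2})--(\ref{N2-3}) and (\ref{L1-2}).

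Collecting all pieces, the inequality
\[
{\cal E}(\vr^m,\bu^m\,|\,\hat r^m,\hat\bV_{h,0}^m)+\tfrac{\mu}{2}\Delta t\sum_{n=1}^m\sum_{K\in{\cal T}}\int_K|\nabla(\bu^n-\bV_{h,0}^n)|^2\dx \le c\big(h^a+\sqrt{\Delta t}+{\cal E}(\vr^0,\bu^0\,|\,\hat r(0),\hat\bV_{h,0}(0))\big)+c\,\Delta t\!\sum_{n=1}^m\!{\cal E}(\vr^n,\bu^n\,|\,\hat r^n,\hat\bV_{h,0}^n)
\]
follows, which is the claim. The delicate point throughout is keeping track of the correct power of $h$ in the residual terms (governed by the exponent $a$ in (\ref{M2})) when the density has low integrability $\gamma$ close to $3/2$, and ensuring that nothing worse than an integrable-in-$n$ multiple of ${\cal E}$ ever appears, so that a subsequent discrete Gronwall lemma will yield the final bound (\ref{M1}).
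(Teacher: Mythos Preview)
Your proposal is correct and follows essentially the same route as the paper: subtract the identity (\ref{relativeenergy-**}) from the approximate relative energy inequality (\ref{relativeenergy-}), observe $S_1-{\cal S}_1=0$, bound the differences $S_2-{\cal S}_2$ and $S_3-{\cal S}_3$ (the paper's ${\cal P}_1,{\cal P}_2$) via the essential/residual splitting of Lemma~\ref{LL1} and Young's inequality with a small parameter $\delta$, and use the continuity equation $\partial_t r+\bV\cdot\nabla r=-r\,\Div\bV$ to recombine the pressure block into the two terms ${\cal P}_3,{\cal P}_4$ that you wrote down. One small clarification: the factor $\tfrac{\mu}{2}$ in front of the dissipation does not arise from any Korn-type manipulation of $S_1-{\cal S}_1$ (that difference is identically zero and the full $\mu$-term is already on the left of (\ref{relativeenergy-})); it appears only after the $\delta$-portions from the Young-inequality estimates of ${\cal P}_1,{\cal P}_2,{\cal P}_4$ are absorbed into the left-hand side with $\delta$ chosen small relative to $\mu$.
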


\begin{proof}
We observe that
$$
S_6-{\cal S}_6= \deltat \sum_{n=1}^m\int_{\Omega_h} p'(\hat r^n)\frac {\hat r^n-\vr^n}{\hat r^n}\vc V^n\cdot\nabla r^n\dx + \deltat \sum_{n=1}^m\int_{\Omega_h} p'(\hat r^n)\frac {\hat r^n-\vr^n}{\hat r^n}(\vc u^n-\vc V^n)\cdot\nabla r^n\dx.
$$
Gathering the formulae (\ref{relativeenergy-}) and (\ref{relativeenergy-*}), one gets
\begin{equation}\label{relativeenergy-1}
{\cal E}(\vr^m,\bu^m\Big| \hat r^m, \hat \bV_{h,0}^m)- {\cal E}(\vr^0,\bu^0\Big|\hat r(0),\hat\bV_{h,0}(0)) + \mu \deltat\sum_{n=1}^m\sum_{K\in {\cal T}}\Big|\nabla(\bu^n-\bV^n_{0,h})\Big|^2_{L^2(K;\Rm^3)}\le\sum_{i=1}^4{\cal P}_i+ {\cal Q},
\end{equation}
where
\begin{align*}
&{\cal P}_1=
\deltat\sum_{n=1}^m\sum_{K\in{\cal T}}|K|(\vr_K^{n-1}-r^{n-1}_K)\frac{{\bV}_{h,0,K}^{n}-{\bV}_{h,0,K}^{n-1}}{\deltat}\cdot \Big({\bV}_{h,0,K}^{n}   - \bu_K^{n}\Big),
\\
&{ {\cal P}_2=\deltat\sum_{n=1}^m\sum_{K\in{\cal T}}\sum_{\sigma=K|L\in {\cal E}_K}|\sigma|\Big(\vr_\sigma^{n,{\rm up}}-\hat r_\sigma^{n,{\rm up}}\Big)\Big({\hat\bV}^{n,{\rm up}}_{h,0,\sigma}-
{\hat\bu}^{n,{\rm up}}_\sigma\Big)\cdot\Big(\bV^n_{h,0,\sigma}-\bV^n_{h,0,K}\Big)  \bV_{h,0,\sigma}^{n, {\rm up}}\cdot\bn_{\sigma,K},}
\\
&{\cal P}_3=
-\deltat \sum_{n=1}^m\int_{\Omega_h}\Big(p(\vr^n)-p'(\hat r^n)(\vr^n-\hat r^n)-p(\hat r^n)\Big){\rm div}\vc V^n,
\\
& {\cal P}_4=\deltat \sum_{n=1}^m\sum_{K\in {\cal T}}\int_{K} p'(\hat r^n)\frac {\hat r^n-\vr^n}{\hat r^n}(\vc u^n-\vc V^n)\cdot\nabla r^n\dx,
\\
&{ {\cal Q}=
{\cal R}^m_{h,\deltat} +R^m_{h,\deltat} +{ G}^m. }
\end{align*}

Now, we estimate conveniently the terms ${\cal P}_i$, $i=1,\ldots, 4$ in four steps.

\vspace{2mm}

{\bf Step 1:} {\it Term ${\cal P}_1$.} { We estimate the $L^\infty$ norm of $\frac{{\bV}_{h,0,K}^{n}-{\bV}_{h,0,K}^{n-1}}{\deltat}$ by
$L^\infty$ norm of $\partial_t \vc V$ in the same manner as in (\ref{R2.1}).
According to Lemma \ref{LL1},
$
|\vr-r|^\gamma 1_{R_+\setminus[\underline r/2,2\overline r]}(\vr)\le c(p) E^p(\vr|r),
$
with any $p\ge 1$; in particular,
\begin{equation}\label{aaa}
|\vr-r|^{6/5} 1_{R_+\setminus[\underline r/2,2\overline r]}(\vr)\le c E(\vr|r)
\end{equation}
provided $\gamma\ge 6/5$.

We get by using the H\"older inequality,
$$
\Big|\sum_{K\in{\cal T}}|K|(\vr_K^{n-1}-r^{n-1}_K)\frac{{\bV}_{h,0,K}^{n}-{\bV}_{h,0,K}^{n-1}}{\deltat}\cdot \Big({\bV}_{h,K}^{n}   - \bu_K^{n}\Big)\Big|\le c\|\partial_t\bV \|_{L^\infty(Q_T;\Rm^3)}\times
$$
$$
\Big[\Big(\sum_{K\in{\cal T}}|K||\vr^{n-1}_K-r^{n-1}_K|^2 1_{[\underline r/2,2\overline r]}(\vr_K)\Big)^{1/2}
+
\Big(\sum_{K\in{\cal T}}|K||\vr^{n-1}_K-r^{n-1}_K|^{6/5} 1_{R_+\setminus [\underline r/2,2\overline r]}(\vr_K)\Big)^{5/6}\Big]
\times
$$
$$
\Big(\sum_{K\in{\cal T}}|K| \Big|{\bV}_{h,0,K}^{n}   - \bu_K^{n}\Big|^6\Big)^{1/6}
\le c(\|(\partial_t\bV)\|_{L^\infty(Q_T;\Rm^{3})})\Big({\cal E}^{1/2}(\vr^{n-1},\hat\bu^{n-1}|\hat r^{n-1},\hat\bV_{h,0}^{n-1})
$$
$$
+
{\cal E}^{5/6}(\vr^{n-1},\hat\bu^{n-1}|\hat r^{n-1},\hat \bV_{h,0}^{n-1})\Big)\,\Big(\sum_{K\in {\cal T}}\| {\bV}_{h,0,K}^{n}   - \bu_K^{n}\|_{L^6(K;\Rm^3)}^6\Big)^{1/6},
$$
where we have used (\ref{aaa}) and estimate (\ref{est4}) to obtain the last line.
 Now,  we write ${\bV}_{h,0,K}^{n}   - \bu_K^{n}= ([{\bV}_{h,0}^{n}   - \bu^{n}]_K- ({\bV}_{h,0}^{n}   - \bu^{n}))
+ ({\bV}_{h,0}^{n}   - \bu^{n})$ and use the Minkowski inequality together with formulas (\ref{L2-3}), (\ref{sob1})
to get
$$
\Big(\sum_{K\in {\cal T}}\| {\bV}_{h,0,K}^{n}   - \bu_K^{n}\|_{L^6(K;\Rm^3)}^6\Big)^{1/6}\le
\Big(\sum_{K\in {\cal T}}\| \nabla({\bV}_{h,0}^{n}   - \bu^{n})\|^2_{L^2(K;\Rm^3)}\Big)^{1/2}.
$$
Finally, employing
Young's inequality, and estimate (\ref{est4}), we arrive at
\begin{multline}\label{cP1}
|{\cal P}_1|  \le \;  c({ \delta},M_0,E_0,\underline r,\overline r,\|(\bV,\nabla\bV,\partial_t\bV)\|_{L^\infty(Q_T,\Rm^{15})})
\\
\times \Big(\deltat {\cal E}(\vr^0,\hat \bu^0|\hat r^0,\hat\bV_{h,0}^0)+ \deltat\sum_{n=1}^m{\cal E}(\vr^n,\hat\bu^n|\hat r^n,\hat\bV_{h,0}^n)\Big)
+ \delta \deltat\sum_{n=1}^m\sum_{K\in {\cal T}}\| \nabla({\bV}_{h,0}^{n}   - \bu^{n})\|^2_{L^2(K;\Rm^3)},
\end{multline}
with any $\delta>0$.
}

{\bf Step 2:} {\it Term ${\cal P}_2$.} We rewrite $\bV^n_{h_0,\sigma}- \bV^n_{h_0,K}= \bV^n_{h,\sigma}- \bV^n_{h,K}+
[\bV^n_{h,0}- \bV^n_{h}]_\sigma + [\bV^n_{h,0}- \bV^n_{h}]_K$ and estimate the $L^\infty$ norm of this expression by
$h\|\nabla\bV\|_{L^\infty(Q_T;\Rm^9)}$ by virtue of (\ref{N2-2}--\ref{N2-3}), (\ref{L2-1}--\ref{L2-2}), (\ref{L1-3})$_{s=1}$.
{Now we write ${\cal P}_2=\deltat \sum_{n=1}^m{\cal P}_2^n$ where Lemma \ref{LL1} and the H\"older inequality yield, similarly as in the previous step,
\begin{equation*}
\begin{aligned}
|{\cal P}^n_2| & \le c(\underline r,\overline r, \|\nabla\bV\|_{L^\infty(Q_T;\Rm^9)}) \times
\\ &
\sum_{K\in {\cal T}}\sum_{\sigma\in {\cal E}(K)}|\sigma| h
\Big(E^{1/2}(\vr_\sigma^{n,{\rm up}}|\hat r_\sigma^{n,{\rm up}})+ E^{2/3}(\vr_\sigma^{n,{\rm up}}|\hat r_\sigma^{n,{\rm up}}\Big)\,|\hat\bV^{n,{\rm up}}_{h,0,\sigma}|\,|{\hat\bV}_{h,0,\sigma}^{n,{\rm up}}   -\hat\bu_\sigma^{n,{\rm up}}|
\\
& \le c(\underline r,\overline r, \|(\bV,\nabla\bV)\|_{L^\infty(Q_T;\Rm^{12})})\Big[\Big(\sum_{K\in {\cal T}}\sum_{\sigma\in {\cal E}(K)}|\sigma| h \Big(E(\vr_\sigma^{n,{\rm up}}|\hat r_\sigma^{n,{\rm up}})\Big)^{1/2}
\\
&
+
\Big(\sum_{K\in {\cal T}}\sum_{\sigma\in {\cal E}(K)}|\sigma| h
E(\vr_\sigma^{n,{\rm up}}|\hat r_\sigma^{n,{\rm up}})\Big)^{2/3}\Big]
 \times
\Big(\sum_{K\in {\cal T}}\sum_{\sigma\in {\cal E}(K)}|\sigma| h\Big|{\hat\bV}_{h,0,\sigma}^{n,{\rm up}}   -\hat\bu_\sigma^{n,{\rm up}}\Big|^6\Big)^{1/6},
\end{aligned}
\end{equation*}
provided $\gamma\ge 3/2$.
Next,  we observe that the contribution of the face $\sigma=K|L$ to the  sums
$
\sum_{K\in {\cal T}}$ $\sum_{\sigma\in {\cal E}(K)}$ $|\sigma| h
E(\vr_\sigma^{n,{\rm up}}|\hat r_\sigma^{n,{\rm up}})$
and $ \sum_{K\in {\cal T}}\sum_{\sigma\in {\cal E}(K)}|\sigma| h|{\hat\bV}_{h,0,\sigma}^{n,{\rm up}}   -\hat\bu_\sigma^{n,{\rm up}}|^6$
is less or equal than
$2|\sigma| h ( E(\vr_K^{n}|\hat r_K^{n}) + E(\vr_L^{n}|\hat r_L^{n})),
$ and
than $2|\sigma| h (|{\bV}_{h,0,K}^{n}   -\bu_K^{n}|^6+ |{\bV}_{h,0,L}^{n}   -\bu_L^{n}|^6)$, respectively. Consequently,we get
by the same reasoning
as in the previous step, under assumption $\gamma\ge 3/2$,
\begin{equation}\label{cP2}
|{\cal P}_2|\le c(\delta, M_0, E_0, \underline r,\overline r, \|(\bV,\nabla\bV)\|_{L^\infty(Q_T;\Rm^{12})})\,\deltat \sum_{n=1}^m {\cal E}(\vr^n,\hat\bu^n|
\hat r^n,\hat \bV_{h,0}^n) + \delta \deltat\sum_{n=1}^m\sum_{K\in {\cal T}}\| \nabla({\bV}_{h,0}^{n}   - \bu^{n})\|^2_{L^2(K;\Rm^3)}.
\end{equation}
}
{\bf Step 3:} {\it Term ${\cal P}_3$.} We realize that
$$
p(\vr_K^n)-p'(r_K^n)(\vr_K^n-r_K^n)-p(r_K^n)\le c(\underline r,\overline r) E(\vr_K|r_K),
$$
by virtue of  Lemma \ref{LL1} in combination with assumption (\ref{i4}).
Consequently,
\begin{equation}\label{cP3}
|{\cal P}_{3}|\le c\|\dv \bV\|_{L^\infty(Q_T)}\deltat \sum_{n=1}^m{\cal E}(\vr^n,\hat\bu^n|\hat r^n,\hat \bV_{h,0}^n).
\end{equation}
\medskip\noindent{\bf Step 4:}{\it Term ${\cal P}_4$.}
We write $\bu^n-\bV^n$ as the sum $(\bu^n-\bV^n_{h,0})+ (\bV^n_{h,0}-\bV^n)$ accordingly splitting
${\cal P}_4$ into two terms
$$
\deltat \sum_{n=1}^m\sum_{K\in {\cal T}}\int_{K} p'(\hat r^n)\frac {\hat r^n-\vr^n}{\hat r^n}(\vc u^n-\bV^n_{h,0})\cdot\nabla r^n\dx\quad\mbox{and}\quad \deltat \sum_{n=1}^m\sum_{K\in {\cal T}}\int_{K} p'(\hat r^n)\frac {\hat r^n-\vr^n}{\hat r^n}(\bV^n_{h,0}-\vc V^n)\cdot\nabla r^n\dx.
$$
Reasoning similarly as in Step 2, we get
\begin{equation}\label{cP4}
\begin{aligned}
|{\cal P}_{4}|& \le h^2\;c(\delta,M_0, E_0,\underline r, \overline r, |p'|_{C([\underline r,\overline r])} \|(\nabla r,\nabla\bV)\|_{L^\infty(\Omega;\Rm^9)})
\\ & + c(\delta, \|\underline r, \overline r, |p'|_{C([\underline r,\overline r])} \|\nabla r\|_{L^\infty(\Omega;\Rm^3)})\;
\deltat \sum_{n=1}^m{\cal E}(\vr^n,\hat\bu^n|\hat r^n,\hat \bV_{h,0}^n) +\delta \,\deltat \sum_{n=1}^m\sum_{K\in {\cal T}}\| \nabla({\bV}_{h,0}^{n}   - \bu^{n})\|^2_{L^2(K;\Rm^3)}.
 \end{aligned}
\end{equation}
Gathering the formulae (\ref{relativeenergy-1}) and (\ref{cP1})-(\ref{cP4}) with $\delta$ sufficiently small (with respect to $\mu$), we conclude the proof of Lemma \ref{Gronwall}.
\end{proof}

\section{{ End}  of the proof of  the error estimate (Theorem \ref{TM1})}

{ Finally, Lemma \ref{Gronwall} in combination with the bound (\ref{est4}) yields
\[{\cal E}(\vr^m,\hat\bu^m|\hat r^m,\hat\bV_{h,0}^m)\le c\Big[h^A+\sqrt{\deltat}+\deltat + {\cal E}(\vr^0,\hat\bu^0|\hat r(0),\hat\bV_{h,0}(0))\Big] + c\,\deltat\sum_{n=1}^{m-1} {\cal E}(\vr^n,\hat\bu^n|\hat r^n,\hat\bV_{h,0}^n);
\]
whence by the discrete standard version of the Gronwall lemma one gets at the first step
$$
{\cal E}(\vr^m,\hat\bu^m|\hat r^m,\hat\bV_{h,0}^m)\le c\Big[h^a+\sqrt{\deltat} + {\cal E}(\vr^0,\hat\bu^0|\hat r(0),\hat\bV_{h,0}(0))\Big].
$$
Going with this information back to  Lemma \ref{Gronwall}, one gets finally
\begin{equation}\label{aa}
{\cal E}(\vr^m,\hat\bu^m|\hat r^m,\hat\bV_{h,0}^m)+
\deltat\frac \mu 2\sum_{n=1}^m\sum_{{K}\in {\cal T}}\int_K|\Grad(\vc u^n-\vc V^n_{h,0})|^2{\rm d x}
\le c\Big[h^a+\sqrt{\deltat} + {\cal E}(\vr^0,\hat\bu^0|\hat r(0),\hat\bV_{h,0}(0))\Big].
\end{equation}

Now, we write
$$
\vr_K^n(\bu^n_K-\vc V_{h,0,K}^n)^2=\vr_K^n(\bu^n_K-\vc V^n)^2+ 2\vr_K^n\vc V^n (\bu^n_K-\vc V_{h,0,K}^n) +
\vr_K^n (\vc V^n-\vc V_{h,0,K}^n)^2,
$$
where
$$
\|\vc V^n-\vc V_{h,0,K}^n\|_{L^\infty(K;\Rm^3)}\aleq
\|\vc V^n-\vc V_{h}^n\|_{L^\infty(K;\Rm^3)}+ \|\vc V_h^n-\vc V_{h,K}^n\|_{L^\infty(K;\Rm^3)}+ \|[\vc V_{h}^n-\vc V_{h,0}^n]_K\|_{L^\infty(K;\Rm^3)}
$$
$$
\aleq h\Big(\|\Grad \vc V^n\|_{L^\infty(K;\Rm^9)}+ \|\Grad \vc V_h^n\|_{L^\infty(K;\Rm^9)}+\|\vc V_{h}^n-\vc V_{h,0}^n\|_{L^\infty(K;\Rm^3)}\aleq h\|\nabla\vc V^n\|_{L^\infty(K;\Rm^9)}.
$$
In the above calculation we have employed formula (\ref{L1-2}) to estimate the first term,
estimates (\ref{L2-1})$_{s=1}$, (\ref{L1-3})$_{s=1}$ to estimate the second term, and formulas
(\ref{N2-2}) and (\ref{N2-3}) for $K\cap\partial\Omega_h=\emptyset$ and $K\cap\partial\Omega_h\neq\emptyset$, respectively,
to evaluate the last term. We conclude that
\begin{equation}\label{L1}
\sum_{K\in {\cal T}}\frac 12{|K|}\Big(\vr^m_K|{\bu}^m_K-{\bV}_{h,0,K}^m|^2-\vr^{0}_K|{\bu}^{0}_K-{\bV}_{h,0,K}^{0}|^2\Big)
\end{equation}
$$
 \ge
\int_{\Omega\cap\Omega_h}\vr^m(\hat\bu^m-\vc V^m)^2{\rm d} x -\int_{\Omega\cap\Omega_h}\vr^0(\hat\bu^0-\vc V^0)^2{\rm d}x + L_{1},
$$
where
$$
|L_{1}|\aleq h\; M_0\|\Grad \vc V\|_{L^\infty((0,T)\times\Omega;\Rm^9)}.
$$

Similarly, we find with help of (\ref{est4}),
$$
\|E(\vr^n_K|\hat r^n)-E(\vr^n_K, r^n)\|_{L^\infty(K)}\le h\;c(M_0,\underline r,\overline r, |p|_{C^1[\underline r,\overline r]}\|\nabla r\|_{L^\infty(Q_T;\Rm^3)});
$$
whence
\begin{equation}\label{L2}
\sum_{K\in {\cal T}}|K|\Big(E(\vr^n_K|\hat r^n)- E(\vr^0_K|\hat r^0) \ge
\int_{\Omega\cap\Omega_h}E(\vr^m| r^m){\rm d}x-\int_{\Omega\cap\Omega_h}E(\vr^0| r^0){\rm d}x + L_2,
\end{equation}
where
$$
|L_2|\le h\;c(M_0,\underline r,\overline r, |p|_{C^1[\underline r,\overline r]},\|\nabla r\|_{L^\infty(Q_T;\Rm^3)}).
$$
 Finally, by virtue of (\ref{N2-2}--\ref{N2-3}) and (\ref{L1-3})$_{s=2}$
 $$
 \|\nabla (\bV^n_{h,0}-\bV^n)\|_{L^2(K;\Rm^3)}\aleq  h \|(\nabla\bV^n,\nabla^2\bV^n)\|_{L^\infty(K;\Rm^{12})};
 $$
 whence
 \begin{equation}\label{L3}
 \deltat\sum_{n=1}^m\sum_{{K}\in {\cal T}}\int_K|\Grad(\vc u^n-\vc V^n_{h,0})|^2{\rm d x} \ge\deltat\sum_{n=1}^m
 \int_{\Omega \cap\Omega_h}|(\nabla_h\vc u^n-\Grad\vc V^n)|^2{\rm d x} + L_3,
\end{equation}
where
$$
|L_3| \le h^2 c( \|(\nabla\bV^n,\nabla^2\bV^n)\|_{L^\infty(K;\Rm^{12})}).
$$

Theorem \ref{TM1} is a direct consequence of estimate (\ref{aa}) and identities (\ref{L1}--\ref{L3}). Theorem \ref{TM1} is thus proved.}


\section{Concluding remarks}

In the convergence proofs one usually needs to complete the numerical scheme by stabilizing terms, so that
the new numerical scheme reads

\bFormula{num2+}
\sum_{K \in \mathcal{T}_h } |K| \frac{ \vr^n_K - \vr^{n-1}_K }{ \Det } \phi_K + \sum_{K \in \mathcal{T}_h }
\sum_{ \sigma \in \mathcal{E}(K) } |\sigma| \vr^{n,{\rm up}}_\sigma ( \vu^n_{\sigma} \cdot \vc{n}_{\sigma, K} ) \phi_K + T_c(\phi)= 0,
\eF
$$
\mbox{for any}\ \phi \in Q_h(\Omega_h)\ \mbox{and}\ n=1,\ldots,N,
$$
\bFormula{num3+}
\sum_{K\in{\cal T}}\frac{|K|}{\deltat} \Big({\vr^n_K{{\bu}}^n_{K}- \vr^{n-1}_K{{\bu}}^{n-1}_{K}} \Big)\cdot \bv_K+ \sum_{K\in {\cal T}}\sum_{\sigma \in {\cal E}(K)} |\sigma|\vr^{n,{\rm up}}_\sigma {{ \hat\bu}}_{\sigma}^{n,{\rm up}}[\bu^n_\sigma\cdot \bn_{\sigma,K}]\cdot \bv_K
\eF
\[
- \sum_{K\in{\cal T}}p(\vr^n_K)\sum_{\sigma \in {\cal E}(K)}  |\sigma|\bv_\sigma\cdot {\vc n}_{\sigma,K}+\mu\sum_{K\in{\cal T}}\int_K \nabla\bu^n : \nabla\bv \  \dx
\]
\[
+ \frac{\mu}{3} \sum_{K\in{\cal T}}\int_K{\rm div}\bu^n{\rm div}\bv\,  \dx + T_m(\phi)=0,
 \ \mbox{for any}\ \bv \in {V_{h,0}(\Omega;R^3) }\ \mbox{and}\ n=1,\ldots,N,
\]
where
$$
T_c(\phi)= h^{1-\ep}\sum_{\sigma\in {\cal E}_{\rm int}}|\sigma|[\vr^n]_{\sigma,\bn_\sigma}[\phi]_{\sigma\bn _\sigma},\quad
T_m(\phi)= \sum_{\sigma\in {\cal E}_{\rm int}}|\sigma|[\vr^n]_{\sigma,\bn_\sigma}\{\hat\bu ^n\}_{\sigma}[\hat\phi]_{\sigma,\bn_\sigma},\;\ep\in [0,1),
$$
see Karlsen, Karper \cite{KK}, Gallouet, Gastaldo, Herbin, Latch\'e \cite{GGHL2008baro}. These terms are designed to provide the supplementary positive term
$$
h^{1-\ep}\sum_{\sigma\in {\cal E}_{\rm int}}|\sigma|[\vr^n]_{\sigma,\bn_\sigma}^2,
$$
to the left hand side of the discrete energy identity (\ref{denergyinequality}). They contribute to the right hand side
of the discrete relative energy (\ref{drelativeenergy}) by supplementary terms whose absolute value is bounded from above by
$$
h^{(1-\ep)/2}\;c\Big(M_0,E_0, \sup_{n=0,\ldots,N}\|r^n,\vc U^n,\nabla\vc U^n\|_{L^\infty(\Omega_h;\Rm^{13})}, \sup_{n=0,\ldots,N}
\sup_{\sigma\in{\cal E} {\rm int}}[r^n]_{\sigma,\bn_\sigma}/h\Big).
$$
Consequently, they give rise to the contributions at the right hand side of the approximate relative energy inequality  (\ref{relativeenergy-}) whose  bound is
$$
h^{(1-\ep)/2}\;c\Big(M_0,E_0, \|r,\nabla r,\vc U,\nabla\vc U\|_{L^\infty(Q_T;\Rm^{16})}
\Big).
$$

Similar estimates are true, if we replace in the numerical scheme everywhere classical upwind formula (\ref{upwind1})
\[ {\rm Up}_K(q,\vu)=
\sum_{\sigma\in{\cal E}(K)} q_\sigma^{\rm up}{\bu}_\sigma\cdot{\vc n}_{\sigma,K}=\stik \Big(q_K [{\bu}_\sigma\cdot{\vc n}_{\sigma,K}]^+ + q_L [{\bu_\sigma}\cdot{\vc n}_{\sigma,K}]^-\Big),
\]
by the modified upwind
suggested in \cite{FMK}:
\begin{equation} \label{upwind2}
{\rm Up}_K(q,\vu)= \stik
                      \frac{q_K}{2}\Big([\bu_\sigma\cdot\bn_{\sigma,K}+h^{1-\ep}]^+ +  [\bu_\sigma\cdot\bn_{\sigma,K}-h^{1-\ep}]^+\Big)+
                      \frac{q_L}{2}\Big([\bu_\sigma\cdot\bn_{\sigma,K}+h^{1-\ep}]^- +  [\bu_\sigma\cdot\bn_{\sigma,K}-h^{1-\ep}]^-\Big),
\end{equation}
where $\sigma=K|L\in {\cal E}_{\rm int}$. We will finish by formulating the error estimate for the numerical problem (\ref{num1}), (\ref{num2+}), (\ref{num3+}) or for (\ref{num1}), (\ref{num2}), (\ref{num3})  with modified upwind
(\ref{upwind2}).
\bTheorem{M2}
Let $\Omega$, $p$, $[r_0,\vc V^0]$, $[r,V]$ satisfy assumptions of Theorem \ref{TM1}. Let $(\vr^n,\vc u^n)_{n=0,\ldots,N}$ be a family of numerical solutions to the scheme (\ref{num1}), (\ref{num2+}), (\ref{num3+})
or to the scheme (\ref{num1}), (\ref{num2}), (\ref{num3})  with modified upwind
(\ref{upwind2}), where $\ep\in [0,1)$. Then error estimate ({\ref{M1}) holds true with the exponent
$$
a=\min\Big\{\frac{2\gamma-3}\gamma,\frac {1-\ep} 2\Big\}\;\mbox{if $\frac 32\le \gamma< 2$},\quad
a= \frac {1-\ep} 2\;\mbox{if $\gamma\ge 2$}.
$$
}
\eT


\def\cprime{$'$} \def\ocirc#1{\ifmmode\setbox0=\hbox{$#1$}\dimen0=\ht0
  \advance\dimen0 by1pt\rlap{\hbox to\wd0{\hss\raise\dimen0
  \hbox{\hskip.2em$\scriptscriptstyle\circ$}\hss}}#1\else {\accent"17 #1}\fi}

\end{document}